\newtheorem{thm}{Theorem}[section]
\newtheorem{lem}[thm]{Lemma}
\newtheorem{prop}[thm]{Proposition}
\newtheorem{cor}[thm]{Corollary}
\theoremstyle{definition}
\newtheorem{defn}[thm]{Definition}
\newtheorem{exam}[thm]{Example}
\begin{document}

\nocite{*}

\title[Closed Unitary and Similarity Orbits in C$^*$-Algebras]{Closed Unitary and Similarity Orbits of Normal Operators in Purely Infinite C$^*$-Algebras}

\author{Paul Skoufranis}
\address{Department of Mathematics, UCLA, Los Angeles, California, 90095-1555, USA}
\email{pskoufra@math.ucla.edu}
\thanks{This research was supported in part by NSERC PGS D3-389187-200.}
\subjclass[2010]{Primary: 46L05; Secondary:  47A99}
\date{November 29, 2012.}
\keywords{purely infinite C$^*$-algebra, closed unitary orbits, closed similarity orbit, normal operator}

\begin{abstract}
We will investigate the norm closure of the unitary and similarity orbits of normal operators in unital, simple, purely infinite C$^*$-algebras.  An operator theoretic proof will be given to the classification of when two normal operators are approximately unitarily equivalent in said algebras with trivial $K_1$-group.  Some upper and lower bounds for the distance between unitary orbits will be obtained based on these methods.  In addition, a complete characterization of when one normal operator is in the closed similarity orbit of another normal operator will be given for unital, simple, purely infinite C$^*$-algebras and type III factors with separable predual.
\end{abstract}

\maketitle

\maketitle

\section{Introduction}
\label{sec:INTRO}

Significant research has been performed in determining when two normal operators in a unital C$^*$-algebra are approximately unitarily equivalent.  For example the Weyl-von Neumann-Berg Theorem determines when two normal operators in the bounded linear maps on a complex, separable, infinite dimensional Hilbert space are approximately unitarily equivalent (see {\cite[Theorem II.4.4]{Da4}} for example) and a famous paper due to Brown, Douglas, and Fillmore can be used to determine when two normal operators in the Calkin algebra are approximately unitarily equivalent (see {\cite[Theorem 11.1]{BDF}}).  More recently {\cite[Theorem 1.3]{Sh}} completely determines when two normal operators in a von Neumann algebra of an arbitrary single type are approximately unitarily equivalent.
\par
Given a normal operator $N$ in a unital C$^*$-algebra $\mathfrak{A}$, the Continuous Functional Calculus for Normal Operators provides a unital, injective $^*$-homomorphism from the continuous functions on the spectrum of $N$ into $\mathfrak{A}$ sending the identity function to $N$.  It is easy to see that two normal operators are approximately unitarily equivalent in $\mathfrak{A}$ if and only if the corresponding unital, injective $^*$-homomorphism are approximately unitarily equivalent.  Thus it is of interest to determine when two unital, injective $^*$-homomorphisms from an abelian C$^*$-algebra to a fixed unital C$^*$-algebra are approximately unitarily equivalent.  In particular, a complete classification was given in \cite{Dad} for unital, simple, purely infinite C$^*$-algebras.
\begin{thm}[{\cite[Theorem 1.7]{Dad}}]
\label{dadaresult}
Let $X$ be a compact metric space, let $\mathfrak{A}$ be a unital, simple, purely infinite C$^*$-algebra, and let $\varphi, \psi : C(X) \to \mathfrak{A}$ be two unital, injective $^*$-homomorphisms.  Then $\varphi$ and $\psi$ are approximately unitarily equivalent if and only if $[[\varphi]] = [[\psi]]$ in $KL(C(X), \mathfrak{A})$ (see \cite{Ro} for the definition of $KL$).
\end{thm}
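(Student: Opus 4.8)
The plan is to prove the two implications separately, with essentially all of the difficulty residing in the sufficiency of the $KL$-condition. For the necessity direction I would argue that the $KL$-class is an invariant of approximate unitary equivalence. Conjugation by any unitary $u \in \mathfrak{A}$ induces the identity on all of the total $K$-theory and hence on $KK$, so each $\operatorname{Ad}(u_n) \circ \varphi$ represents the same class as $\varphi$. Given unitaries $\{u_n\}$ with $\|u_n \varphi(f) u_n^* - \psi(f)\| \to 0$ for all $f \in C(X)$, the point is that $[[\varphi]]$ is detected by the induced map $\underline{K}(C(X)) \to \underline{K}(\mathfrak{A})$, i.e. by the images of projections and unitaries in matrix amplifications of $C(X)$ (and its suspensions/mapping cones). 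Since $\psi$ sends each such projection $p$ to a norm-limit of the projections $u_n\varphi(p)u_n^*$, it is Murray--von Neumann equivalent to them for large $n$, and similarly on unitaries; passing to the limit gives $\psi_* = \varphi_*$ on $\underline{K}(C(X))$, so $[[\varphi]] = [[\psi]]$.

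The substance is the converse, and here I would follow the standard classification-theoretic route, combining the Universal Coefficient Theorem with an approximate-intertwining argument adapted to the purely infinite setting. Writing the compact metric space as an inverse limit $X = \varprojlim X_n$ of finite complexes yields $C(X) = \varinjlim C(X_n)$, and the associated Milnor sequence presents $KK(C(X), \mathfrak{A})$ as an extension of $\varprojlim KK(C(X_n), \mathfrak{A})$ by a $\varprojlim^1$-term; the group $KL(C(X), \mathfrak{A})$ is precisely the quotient annihilating this $\varprojlim^1$, equivalently the $\operatorname{Pext}$-subgroup appearing in the UCT. Reducing to a single finite complex $X_n$ (where $\operatorname{Pext}$ vanishes since $K_*(C(X_n))$ is finitely generated, so that $KK$ and $KL$ agree), the engine of the argument is a \emph{uniqueness theorem}: two injective $^*$-homomorphisms $C(X_n) \to \mathfrak{A}$ inducing the same $KK$-class are approximately unitarily equivalent. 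To establish this I would exploit pure infiniteness heavily---Cuntz's theorem that Murray--von Neumann equivalence of projections in $\mathfrak{A}$ is governed entirely by $K_0(\mathfrak{A})$, together with the abundance of properly infinite projections---in order to realize the prescribed $K$-theoretic data by genuine partial isometries and to absorb any residual discrepancy into a large complementary summand.

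The finite-stage uniqueness statements are then assembled into a global approximate unitary equivalence by an Elliott-type approximate intertwining: for each finite $F \subset C(X)$ and $\epsilon > 0$ one produces a unitary $u$ with $\|u\varphi(f)u^* - \psi(f)\| < \epsilon$ on $F$, then iterates and extracts a convergent sequence of conjugating unitaries. The main obstacle---and the reason the correct invariant is $KL$ rather than $KK$---is exactly the control of the $\varprojlim^1 / \operatorname{Pext}$ contribution: an equality of full $KK$-classes would demand more than approximate unitary equivalence can supply, whereas vanishing of the $\operatorname{Pext}$ obstruction is precisely what permits the finite-stage unitaries to be chosen compatibly enough to converge. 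The bulk of the technical work lies in proving the uniqueness theorem with explicit estimates and in verifying that the absorption step requires only simplicity and pure infiniteness of $\mathfrak{A}$---no nuclearity hypothesis---with all the UCT machinery entering through the nuclear domain $C(X)$.
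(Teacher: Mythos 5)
The paper does not prove this statement at all: Theorem \ref{dadaresult} is imported verbatim from Dadarlat's paper \cite{Dad} and is used in the sequel as a black box, so there is no internal proof to compare your proposal against. Judged on its own terms, your outline does reproduce the correct large-scale architecture of the actual argument in the literature (necessity of the $KL$-condition via invariance of total $K$-theory under approximate unitary equivalence; sufficiency via $C(X)=\varinjlim C(X_n)$ for finite complexes $X_n$, the Milnor $\varprojlim^1$ sequence identifying $KL$ as the quotient of $KK$ by $\operatorname{Pext}$, a uniqueness theorem at each finite stage, and an approximate intertwining to assemble the stages). The identification of the $\operatorname{Pext}$ term as exactly the obstruction that distinguishes $KL$ from $KK$ for approximate unitary equivalence is also the right conceptual point.

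The genuine gap is that the entire mathematical content of the theorem is concentrated in the one step you assert rather than prove: the uniqueness theorem that two unital injective $^*$-homomorphisms $C(X_n)\to\mathfrak{A}$ with the same $KK$-class are approximately unitarily equivalent, for $X_n$ a finite complex and $\mathfrak{A}$ unital, simple, purely infinite. Saying you would ``exploit pure infiniteness heavily'' and ``absorb any residual discrepancy into a large complementary summand'' names the tools but not the argument; in Dadarlat's treatment this step requires, among other things, a reduction along the skeleta of $X_n$, the realization of arbitrary $KK$-data by explicit morphisms (including torsion classes, which forces one to work with matrix amplifications and mapping cones, not just projections and unitaries in $\mathfrak{A}$ itself), and an absorption/Weyl--von Neumann-type theorem in the purely infinite setting. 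Moreover, the approximate intertwining across stages is not automatic even granting finite-stage uniqueness: the unitaries produced at stage $n$ must be corrected to be compatible with stage $n+1$ on a prescribed finite set, and controlling that correction is precisely where the $\varprojlim^1$ analysis has to be carried out quantitatively rather than just invoked. As written, your proposal is a correct roadmap but not a proof.
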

As a specific case of {\cite[Theorem 1.7]{Dad}}, if $X \subseteq \mathbb{C}$ is compact, it is a corollary of the Universal Coefficient Theorem for C$^*$-algebras (see \cite{RS}), the definition of $KL(C(X), \mathfrak{A})$, and the fact that $K_*(C(X))$ is a free abelian group that 
\[
KL(C(X), \mathfrak{A}) = KK(C(X), \mathfrak{A}) = Hom(K_*(C(X)), K_*(\mathfrak{A}))
\]
where $Hom(K_*(C(X)), K_*(\mathfrak{A}))$ is the set of all homomorphisms from $K_*(C(X))$ to $K_*(\mathfrak{A})$.  Thus {\cite[Theorem 1.7]{Dad}} implies that for a unital, simple, purely infinite C$^*$-algebra $\mathfrak{A}$ and a compact subset $X$ of $\mathbb{C}$, two unital, injective $^*$-homomorphisms $\varphi, \psi : C(X) \to \mathfrak{A}$ are approximately unitarily equivalent if and only if $\varphi_* = \psi_*$ where $\varphi_*$ and $\psi_*$ are the group homomorphisms from $K_*(C(X))$ to $K_*(\mathfrak{A})$ induced by $\varphi$ and $\psi$ respectively.  Thus a complete classification of when two normal operator with the same spectrum in a unital, simple, purely infinite C$^*$-algebra is obtained.  
\par
The proof of Dadarlat's result greatly varies from the traditional proof of when two normal operators on a complex, infinite dimensional, separable Hilbert space are approximately unitarily equivalent.  In Section \ref{sec:CLOSEDUNITARYORBITS} we shall give an operator theoretic proof of the classification of when two normal operators are approximately unitarily equivalent in a unital, simple, purely infinite C$^*$-algebra with trivial $K_1$-group.  Although the results of Section \ref{sec:CLOSEDUNITARYORBITS} are less powerful than {\cite[Theorem 1.7]{Dad}}, the more traditional nature of the proof enables the study of additional operator theoretic problems on these C$^*$-algebras.
\par
One particularly interesting problem is the study of the distance between unitary orbits of operators.  Significant progress has been made in determining the distance between two unitary orbits of bounded operators on a complex, infinite dimensional Hilbert space (see \cite{Da2} and \cite{Da3}).  In terms of determining the distance between unitary orbits of normal operators inside other C$^*$-algebras, \cite{Da1} makes significant progress for the Calkin algebra (which is a unital, simple, purely infinite C$^*$-algebra) and \cite{HN} makes significant progress for semifinite factors.  
\par
In Section \ref{sec:DISTANCEUNITARYORBITS} we will make use of the approach of Section \ref{sec:CLOSEDUNITARYORBITS} to compute some bounds on the distance between unitary orbits of normal operators in unital, simple, purely infinite C$^*$-algebras with trivial $K_1$-group.  Using {\cite[Theorem 1.7]{Dad}} along with some additional K-theory arguments, we will extend these results to unital, simple, purely infinite C$^*$-algebras without any constraints on the $K_1$-groups.
\par
Another interesting operator theoretic problem is describing the norm closure of the similarity orbit of a given operator.  A complete classification of the closed similarity orbit of an arbitrary bounded linear operator on a complex, infinite dimensional Hilbert space was announced in {\cite[Theorem 1]{AHV}} and a proof was given in {\cite[Theorem 9.2]{AFHV}}.  An easy modification of the proof of {\cite[Theorem 1]{AHV}} led to a complete classification of the closed similarity orbit of an arbitrary operator in the Calkin algebra (announced in {\cite[Theorem 2]{AHV}} and proved in {\cite[Theorem 9.3]{AFHV}}).  
\par
Before {\cite[Theorem 1]{AHV}} a classification of when one normal operator on a complex, infinite dimensional, separable Hilbert space was in the closed similarity orbit of another operator with minor additional constraints was given in {\cite[Theorem 1]{BH}}.  Thus it appears natural when tackling the problem of computing the norm closure of the similarity orbit of an operator in a unital C$^*$-algebra to first consider the normal operators.  Using the results from Section \ref{sec:DISTANCEUNITARYORBITS} along with a result from \cite{Sk} and ideas from \cite{He}, a classification of when one normal operator is in the closed unitary orbit of another normal operator in unital, simple, purely infinite C$^*$-algebras and type III factors with separable predual will be given in Section \ref{sec:CLOSESIMORBITS}.

\section{Closed Unitary Orbits of Normal Operators}
\label{sec:CLOSEDUNITARYORBITS}

In this section we will provide an operator theoretic proof of when two normal operators in a unital, simple, purely infinite C$^*$-algebra with trivial $K_1$-group are approximately unitarily equivalent (see Corollary \ref{aueintrivialk1}).  Along the way we shall develop the notation and several technical results that will necessary in later sections and develop analogous results for other C$^*$-algebras.
\par
For the discussions in this paper, $\mathfrak{A}$ will denote a unital C$^*$-algebra, $\mathcal{U}(\mathfrak{A})$ will denote the unitary group of $\mathfrak{A}$, $\mathfrak{A}^{-1}$ will denote the group of invertible elements of $\mathfrak{A}$, and $\mathfrak{A}^{-1}_0$ will denote the connected component of the identity in $\mathfrak{A}^{-1}$.  For a fixed C$^*$-algebra $\mathfrak{A}$ and an operator $A \in \mathfrak{A}$, let $\sigma(A)$ denote the spectrum of $A$ in $\mathfrak{A}$, let
\[
\mathcal{U}(A) := \{UAU^* \in \mathfrak{A} \, \mid \, U \in \mathcal{U}(\mathfrak{A})\},
\]
and let
\[
\mathcal{S}(A) := \left\{VAV^{-1} \in \mathfrak{A} \, \mid \, V \in \mathfrak{A}^{-1}\right\}.
\]
The set $\mathcal{U}(A)$ is called the unitary orbit of $A$ in $\mathfrak{A}$ and $\mathcal{S}(A)$ is called the similarity orbit of $A$ in $\mathfrak{A}$.
\par
Notice if $B \in \mathfrak{A}$ then $B \in \mathcal{U}(A)$ if and only if $A \in \mathcal{U}(B)$ and $B \in \mathcal{S}(A)$ if and only if $A \in \mathcal{S}(B)$.  We will denote $B \in \mathcal{U}(A)$ by $A \sim_u B$ and we will denote $B \in \mathcal{S}(A)$ by $A \sim B$.  Clearly $\sim_u$ and $\sim$ are equivalence relations.
\par
We will use $\overline{\mathcal{U}(A)}$ and $\overline{\mathcal{S}(A)}$ to denote the norm closures in $\mathfrak{A}$ of the unitary and similarity orbits of $A$ respectively.  Note if $B \in \overline{\mathcal{U}(A)}$ then $A \in \overline{\mathcal{U}(B)}$ and $B \in \overline{\mathcal{S}(A)}$.  If $B \in \overline{\mathcal{U}(A)}$ we will say that $A$ and $B$ are approximately unitarily equivalent in $\mathfrak{A}$ and will write $A \sim_{au} B$.  Clearly $\sim_{au}$ is an equivalence relation.  Furthermore if $A$ is a normal operator and $A \sim_{au} B$ then $B$ is a normal operator.  If $B \in \overline{\mathcal{S}(A)}$ then it is not necessary that $A \in \overline{\mathcal{S}(B)}$ and $B$ need not be normal if $A$ is normal.  However if $B \in \overline{\mathcal{S}(A)}$ and $C \in \overline{\mathcal{S}(B)}$ then $C \in \overline{\mathcal{S}(A)}$.
\par
It is an easy application of the semicontinuity of the spectrum to show that if $A, B \in \mathfrak{A}$ are such that $B \in \overline{\mathcal{S}(A)}$ then $\sigma(A) \subseteq \sigma(B)$ and $\sigma(A)$ intersects every connected component of $\sigma(B)$.  Thus $\sigma(A) = \sigma(B)$ whenever $A, B \in \mathfrak{A}$ are approximately unitarily equivalent.
\par
It is useful for discussions in this paper to recall the generalized index function introduced in \cite{Li}.
\begin{defn}
Let $\mathfrak{A}$ be a unital C$^*$-algebra and let $N \in \mathfrak{A}$ be a normal operator.  By the Continuous Functional Calculus for Normal Operators, there exists a canonical unital, injective $^*$-homomorphism $\varphi_N : C(\sigma(N)) \to \mathfrak{A}$ such that $\varphi_N(z) = N$.  As $\varphi_N$ is unital and injective, this induces a group homomorphism $\Gamma(N) : K_1(C(\sigma(N))) \to K_1(\mathfrak{A})$.  The group homomorphism $\Gamma(N)$ is called the index function of $N$.  To simplify notation, we will write $\Gamma(N)(\lambda)$ to denote $[\lambda I_\mathfrak{A} - N]_1$ in $\mathfrak{A}$.
\end{defn}
In the case that $\mathfrak{A}$ is a unital, simple, purely infinite C$^*$-algebra, $K_1(\mathfrak{A})$ is canonically isomorphic to $\mathfrak{A}^{-1}/\mathfrak{A}^{-1}_0$ by {\cite[Theorem 1.9]{Cu}}.  Thus if $N \in \mathfrak{A}$ is a normal operator such that $\Gamma(N)$ is trivial then $\lambda I_\mathfrak{A} - N \in \mathfrak{A}^{-1}_0$ for all $\lambda \notin \sigma(N)$.  Furthermore if $N \in \mathfrak{A}$ is a normal operator and $\lambda \notin \sigma(N)$ then $\Gamma(N)(\lambda)$ describes the connected component of $\lambda I_\mathfrak{A} - N$ in $\mathfrak{A}^{-1}$.
\par
The reason for examining the index function in the context of approximately unitarily equivalent normal operators is seen by the following necessary condition.
\begin{lem}
\label{ccoits}
Let $\mathfrak{A}$ be a unital and let $N_1,N_2 \in \mathfrak{A}$ be normal operators such that $N_1 \in \overline{\mathcal{S}(N_2)}$.  Then
\begin{enumerate}
	\item if $\lambda I_\mathfrak{A} - N_2 \in \mathfrak{A}^{-1}_0$ for some $\lambda \notin \sigma(N_1)$ then $\lambda I_\mathfrak{A} - N_1 \in \mathfrak{A}^{-1}_0$, and
	\item if $\mathfrak{A}$ is a unital, simple, purely infinite C$^*$-algebra then $\Gamma(N_1)(\lambda) = \Gamma(N_2)(\lambda)$ for all $\lambda \notin \sigma(N_1)$.
\end{enumerate}
\end{lem}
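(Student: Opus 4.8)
The plan is to run a single limit argument for both parts, differing only in which algebraic invariant I track. Since $N_1 \in \overline{\mathcal{S}(N_2)}$, I would fix a sequence $V_n \in \mathfrak{A}^{-1}$ with $V_n N_2 V_n^{-1} \to N_1$ in norm. The semicontinuity observation recorded above, applied to $N_1 \in \overline{\mathcal{S}(N_2)}$, gives $\sigma(N_2) \subseteq \sigma(N_1)$, so every $\lambda \notin \sigma(N_1)$ also satisfies $\lambda \notin \sigma(N_2)$; in particular $\lambda I_\mathfrak{A} - N_2 \in \mathfrak{A}^{-1}$ and $\Gamma(N_2)(\lambda)$ is defined. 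The key identity is
\[
\lambda I_\mathfrak{A} - V_n N_2 V_n^{-1} = V_n(\lambda I_\mathfrak{A} - N_2)V_n^{-1} \longrightarrow \lambda I_\mathfrak{A} - N_1,
\]
exhibiting $\lambda I_\mathfrak{A} - N_1$ as a norm limit of conjugates of the fixed invertible element $\lambda I_\mathfrak{A} - N_2$.

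For (1), I would use that $\mathfrak{A}^{-1}_0$, being the identity component of the topological group $\mathfrak{A}^{-1}$, is a closed normal subgroup: conjugation $x \mapsto V_n x V_n^{-1}$ is a homeomorphism of $\mathfrak{A}^{-1}$ fixing $I_\mathfrak{A}$, hence carries $\mathfrak{A}^{-1}_0$ onto itself. Thus each $V_n(\lambda I_\mathfrak{A} - N_2)V_n^{-1}$ lies in $\mathfrak{A}^{-1}_0$ whenever $\lambda I_\mathfrak{A} - N_2 \in \mathfrak{A}^{-1}_0$. Since $\lambda \notin \sigma(N_1)$ the limit $\lambda I_\mathfrak{A} - N_1$ lies in $\mathfrak{A}^{-1}$, and as $\mathfrak{A}^{-1}_0$ is closed in $\mathfrak{A}^{-1}$ the limit lands in $\mathfrak{A}^{-1}_0$. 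Here I would note that $\mathfrak{A}^{-1}$ is open in $\mathfrak{A}$, hence locally path connected, so its components are clopen; this is precisely what makes membership in $\mathfrak{A}^{-1}_0$ stable under norm limits that remain invertible.

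For (2), I would track the class in $K_1(\mathfrak{A})$ instead. By \cite[Theorem 1.9]{Cu} the quotient map $q : \mathfrak{A}^{-1} \to \mathfrak{A}^{-1}/\mathfrak{A}^{-1}_0 \cong K_1(\mathfrak{A})$ realizes $\Gamma(N)(\lambda) = [\lambda I_\mathfrak{A} - N]_1$ as $q(\lambda I_\mathfrak{A} - N)$, and since $K_1(\mathfrak{A})$ is abelian one has $q(V x V^{-1}) = q(x)$ for all $V, x \in \mathfrak{A}^{-1}$. Hence $q(V_n(\lambda I_\mathfrak{A} - N_2)V_n^{-1}) = \Gamma(N_2)(\lambda)$ for every $n$. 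Because $\mathfrak{A}^{-1}_0$ is open, the quotient is discrete and $q$ is locally constant, so the convergence $V_n(\lambda I_\mathfrak{A} - N_2)V_n^{-1} \to \lambda I_\mathfrak{A} - N_1$ forces $q(\lambda I_\mathfrak{A} - N_1) = \Gamma(N_2)(\lambda)$ once $n$ is large; that is, $\Gamma(N_1)(\lambda) = \Gamma(N_2)(\lambda)$.

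I expect the only genuinely delicate ingredients to be the topological-group facts that do the heavy lifting: that $\mathfrak{A}^{-1}_0$ is clopen in $\mathfrak{A}^{-1}$ (so the quotient is discrete and $q$ locally constant) together with the conjugation invariance $q(VxV^{-1}) = q(x)$. Once these are granted, both statements follow immediately from the single convergence $V_n(\lambda I_\mathfrak{A} - N_2)V_n^{-1} \to \lambda I_\mathfrak{A} - N_1$, and the only place the hypotheses on $\mathfrak{A}$ enter is in part (2), through Cuntz's identification of $K_1(\mathfrak{A})$ with $\mathfrak{A}^{-1}/\mathfrak{A}^{-1}_0$.
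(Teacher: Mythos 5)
Your proposal is correct and follows essentially the same route as the paper: both fix invertibles $V_n$ with $V_nN_2V_n^{-1}\to N_1$, pass to $V_n(\lambda I_\mathfrak{A}-N_2)V_n^{-1}\to\lambda I_\mathfrak{A}-N_1$, use that $\mathfrak{A}^{-1}_0$ is a conjugation-invariant clopen component for part (1), and use conjugation invariance of the class in $K_1(\mathfrak{A})\cong\mathfrak{A}^{-1}/\mathfrak{A}^{-1}_0$ (via commutativity of $K_1$ and Cuntz's theorem) together with local constancy for part (2). The paper phrases the second part as $[V_n(\lambda I_\mathfrak{A}-N_2)V_n^{-1}]_1=[V_n]_1[\lambda I_\mathfrak{A}-N_2]_1[V_n^{-1}]_1=[\lambda I_\mathfrak{A}-N_2]_1$, which is exactly your quotient-map argument.
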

\begin{proof}
Suppose $N_1 \in \overline{\mathcal{S}(N_2)}$ and $\lambda \notin \sigma(N_1)$.  Then $\sigma(N_2) \subseteq \sigma(N_1)$ and there exists a sequence of invertible elements $V_n \in \mathfrak{A}$ such that 
\[
\lim_{n\to\infty} \left\|N_1 - V_nN_2V_n^{-1}\right\| = 0.
\]
Thus it is clear that
\[
\lim_{n\to\infty} \left\|(\lambda I_\mathfrak{A} - N_1) - V_n(\lambda I_\mathfrak{A} - N_2)V_n^{-1}\right\| = 0.
\]
Therefore, if $\lambda I_\mathfrak{A} - N_2 \in \mathfrak{A}^{-1}_0$ then $V_n(\lambda I_\mathfrak{A} - N_2)V_n^{-1} \in \mathfrak{A}^{-1}_0$ for all $n \in \mathbb{N}$ and thus first result trivially follows.
\par
In the case $\mathfrak{A}$ is a unital, simple, purely infinite C$^*$-algebra, the above implies that $\lambda I_\mathfrak{A} - N_1$ and $V_n(\lambda I_\mathfrak{A} - N_2)V_n^{-1}$ are in the same connected component of $\mathfrak{A}^{-1}$ for sufficiently large $n$.  Therefore
\[
\begin{array}{rcl}
[\lambda I_\mathfrak{A} - N_1]_1 &=&  [V_n(\lambda I_\mathfrak{A} - N_2)V_n^{-1}]_1 \\
 &=& [V_n]_1[\lambda I_\mathfrak{A} - N_2]_1[V_n^{-1}]_1 \\
 &=&  [\lambda I_\mathfrak{A} - N_2]_1.
 \end{array} 
\]
Hence $\Gamma(N_1)(\lambda) = \Gamma(N_2)(\lambda)$.
\end{proof}
The reason for the existence of an operator theoretic proof to {\cite[Theorem 1.7]{Dad}} is the K-theory of unital, simple, purely infinite C$^*$-algebras along with the following result due to Lin.
\begin{thm}[{\cite[Theorem 4.4]{Li}}]
\label{linfa}
Let $\mathfrak{A}$ be a unital, simple, purely infinite C$^*$-algebra and let $N \in \mathfrak{A}$ be a normal operator.  Then $N$ can be approximated by normal operators with finite spectra if and only if $\Gamma(N)$ is trivial. 
\end{thm}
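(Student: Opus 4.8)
The statement is an equivalence, and the two directions are of very different difficulty. \textbf{Necessity} (finite-spectrum approximability $\Rightarrow$ $\Gamma(N)$ trivial) I would dispatch directly. A finite-spectrum normal can be written $M = \sum_j \mu_j P_j$ with the $P_j$ mutually orthogonal projections summing to $I_\mathfrak{A}$, so for $\lambda \notin \sigma(M)$ we have $\lambda I_\mathfrak{A} - M = \sum_j (\lambda - \mu_j) P_j$ with every scalar $\lambda - \mu_j$ nonzero. Contracting each $\lambda - \mu_j$ to $1$ along a path in $\mathbb{C}\setminus\{0\}$ gives a path of invertibles $\sum_j c_j(t) P_j$ from $\lambda I_\mathfrak{A} - M$ to $\sum_j P_j = I_\mathfrak{A}$, so $\lambda I_\mathfrak{A} - M \in \mathfrak{A}^{-1}_0$. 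Now if $N = \lim_k M_k$ with each $M_k$ of finite spectrum and $\lambda \notin \sigma(N)$, then for large $k$ the invertible $\lambda I_\mathfrak{A} - M_k$ is norm-close to $\lambda I_\mathfrak{A} - N$, hence in the same component of $\mathfrak{A}^{-1}$, so $\Gamma(N)(\lambda) = 0$. Since $\sigma(N) \subseteq \mathbb{C}$ is compact and $K_1(C(\sigma(N))) \cong \check{H}^1(\sigma(N);\mathbb{Z})$ is freely generated by the classes $[\lambda I - z]$ as $\lambda$ runs over the bounded components of $\mathbb{C}\setminus\sigma(N)$, vanishing on these generators forces $\Gamma(N) = 0$.

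For \textbf{sufficiency}, fix $\epsilon > 0$; the plan proceeds in three stages. \textbf{Stage 1 (reduce to a finite planar graph).} Tile $\mathbb{C}$ by open squares of diameter $<\epsilon$, let $\Sigma$ be the union of the closed squares meeting $\sigma(N)$, and let $G$ be its $1$-skeleton. Radially collapsing the interior of each square onto its boundary gives a continuous retraction $r : \Sigma \to G$ with $|r(z) - z| < \epsilon$ for $z \in \sigma(N)$. Then $M := \varphi_N(r|_{\sigma(N)})$ is a normal operator with $\sigma(M) \subseteq G$ and $\|N - M\| < \epsilon$, and since every bounded complementary component of $G$ sits inside a bounded complementary component of $\sigma(N)$, a routine naturality check shows $\Gamma(M)$ is again trivial. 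Thus it suffices to approximate a normal operator whose spectrum is a finite planar graph and whose index is trivial.

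\textbf{Stage 2 (the hole-free base case).} Suppose first that $\mathbb{C}\setminus\sigma(N)$ is connected, so $K_1(C(\sigma(N))) = 0$ and the index hypothesis is vacuous. Here I would build the approximant directly from the structure of $\mathfrak{A}$: since every unital, simple, purely infinite C$^*$-algebra has real rank zero, a partition-of-unity argument subordinate to a cover of $\sigma(N)$ by sets of diameter $<\epsilon$ can be refined, using the self-adjoint functional calculus, to a family of mutually orthogonal projections $P_1,\dots,P_m$ with $\sum_i P_i = I_\mathfrak{A}$ and $\|NP_i - \mu_i P_i\|$ small, whence $\sum_i \mu_i P_i$ is a finite-spectrum normal within $\epsilon$ of $N$. \textbf{Stage 3 (unwinding the holes).} For general $G$ with bounded complementary components $\Omega_1,\dots,\Omega_b$, choose a loop $\gamma_j$ encircling $\Omega_j$; restricting $N$ to an annular neighborhood of $\gamma_j$ and normalizing produces a unitary $u_j$ with $[u_j]_1 = \Gamma(N)(\lambda_j) = 0$ for $\lambda_j \in \Omega_j$. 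By Cuntz's isomorphism $K_1(\mathfrak{A}) \cong \mathfrak{A}^{-1}/\mathfrak{A}^{-1}_0$, this means $u_j \in \mathfrak{A}^{-1}_0$, and it is precisely this absence of winding that allows one to cut $\gamma_j$ at a point and perturb $N$ within $\epsilon$ to a normal operator whose spectrum no longer encircles $\Omega_j$. Doing this for each hole reduces the complement to a connected set, i.e. to Stage 2.

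\emph{The main obstacle} lies in Stage 3 and the projection construction underpinning Stage 2: turning the homotopy-theoretic fact $u_j \in \mathfrak{A}^{-1}_0$ into an honest small perturbation that severs a loop while keeping the operator normal, and simultaneously controlling orthogonality, the relation $\sum_i P_i = I_\mathfrak{A}$, and the norm estimate. Pure infiniteness supplies projections and partial isometries in abundance, but orchestrating them so that the index obstruction is exactly what is removed is the delicate part; the necessity direction confirms that a nonzero $\Gamma(N)$ is a genuine obstruction, so the hypothesis $\Gamma(N) = 0$ must be fully exploited here.
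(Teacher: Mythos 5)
The paper does not prove this statement at all: it is imported verbatim as Lin's theorem ({\cite[Theorem 4.4]{Li}}), so there is no internal proof to compare against, and your attempt has to be judged on its own. Your necessity direction is correct and complete: a finite-spectrum normal $M=\sum_j\mu_jP_j$ has $\lambda I_{\mathfrak{A}}-M$ joined to $I_{\mathfrak{A}}$ through invertibles of the same form, upper semicontinuity of the spectrum plus the limit argument passes membership in $\mathfrak{A}^{-1}_0$ to $\lambda I_{\mathfrak{A}}-N$, and since $K_1(C(\sigma(N)))$ is freely generated by the winding classes attached to the bounded components of $\mathbb{C}\setminus\sigma(N)$, vanishing on those classes is all that is needed.

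The sufficiency direction, however, contains gaps that are not merely technical. First, the retraction $r:\Sigma\to G$ of Stage 1 does not exist: there is no continuous retraction of a closed square onto its boundary, so whenever $\sigma(N)$ has nonempty interior (e.g.\ $\sigma(N)$ the closed unit disk, a case used repeatedly elsewhere in this paper) some square lies entirely inside $\sigma(N)$ and the ``radial collapse'' is discontinuous at its center; the reduction to one-dimensional spectra therefore fails in general. Second, and more seriously, Stage 2 asserts that real rank zero plus a partition of unity produces mutually orthogonal projections summing to $I_{\mathfrak{A}}$ that approximately commute with $N$ and localize its spectrum. That is precisely the content of Lin's theorem in the hole-free case, not a routine consequence of real rank zero: real rank zero controls self-adjoint elements, and writing $N=A+iB$ one finds that the spectral projections of a finite-spectrum approximant of $A$ need not nearly commute with $B$; overcoming this is the C$^*$-algebraic analogue of the almost-commuting self-adjoint matrices problem and occupies the bulk of Lin's argument. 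Together with the admitted incompleteness of Stage 3, the hard direction remains unproved; the overall architecture (reduce to a one-complex, kill the winding data, then chop the spectrum into small pieces) is a reasonable caricature of how such proofs proceed, but each stage conceals the actual difficulty rather than resolving it.
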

Using Lin's result and the following trivial technical detail, we can easily provide an operator theoretic proof of {\cite[Theorem 1.7]{Dad}} for unital, simple, purely infinite C$^*$-algebras with trivial $K_0$-group and normal operators with trivial index function.
\begin{lem}
\label{densespectra2}
Let $\mathfrak{A}$ be a C$^*$-algebra, let $N \in \mathfrak{A}$ be a normal operator, let $U$ be an open subset of $\mathbb{C}$ such that $U \cap \sigma(N) \neq \emptyset$, and let $(N_n)_{n\geq1}$ be a sequence of normal operators from $\mathfrak{A}$ such that $N = \lim_{n\to\infty} N_n$.  Then there exists a $k \in \mathbb{N}$ such that $\sigma(N_n) \cap U \neq \emptyset$ for all $n \geq k$.
\end{lem}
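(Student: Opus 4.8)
The plan is to prove a lower-semicontinuity statement for the spectrum by contradiction, the crucial ingredient being the normality of the approximating operators $N_n$. The key analytic fact is that for a \emph{normal} operator $M \in \mathfrak{A}$ and a scalar $\mu \notin \sigma(M)$, the Continuous Functional Calculus gives the exact resolvent identity $\|(\mu I_\mathfrak{A} - M)^{-1}\| = \sup_{z \in \sigma(M)} |\mu - z|^{-1} = \mathrm{dist}(\mu, \sigma(M))^{-1}$. This is precisely where normality is indispensable: for general operators the spectrum is only \emph{upper} semicontinuous under norm convergence, so the conclusion sought here can fail without the normality hypothesis.

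First I would fix a point $\lambda \in U \cap \sigma(N)$ and, using that $U$ is open, choose $r > 0$ so that the open ball $B(\lambda, r)$ is contained in $U$. Then I would argue by contradiction: if no such $k$ exists, the negation forces $\sigma(N_n) \cap U = \emptyset$ for infinitely many $n$. For each such index, $\sigma(N_n) \cap B(\lambda, r) = \emptyset$, hence $\mathrm{dist}(\lambda, \sigma(N_n)) \geq r$, and by the resolvent identity above (using that $N_n$ is normal) one obtains the uniform bound $\|(\lambda I_\mathfrak{A} - N_n)^{-1}\| \leq r^{-1}$.

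Finally, among these infinitely many indices I would select one with $\|N - N_n\| < r$, which is possible since $N_n \to N$ in norm. Factoring $\lambda I_\mathfrak{A} - N = (\lambda I_\mathfrak{A} - N_n)\bigl[I_\mathfrak{A} - (\lambda I_\mathfrak{A} - N_n)^{-1}(N - N_n)\bigr]$ and estimating $\|(\lambda I_\mathfrak{A} - N_n)^{-1}(N - N_n)\| \leq r^{-1}\|N - N_n\| < 1$, a Neumann series argument shows the bracketed factor is invertible, so $\lambda I_\mathfrak{A} - N$ is invertible. This contradicts $\lambda \in \sigma(N)$ and finishes the proof.

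The main (and essentially the only) point of substance is extracting the uniform resolvent bound $\|(\lambda I_\mathfrak{A} - N_n)^{-1}\| \leq r^{-1}$ from the spectral gap around $\lambda$; once the normality of each $N_n$ supplies this, the remaining perturbation estimate is routine.
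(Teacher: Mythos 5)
Your proof is correct. It does, however, take a genuinely different route from the paper, which disposes of this lemma in a single line by appealing to the continuity of the Continuous Functional Calculus: there one picks a continuous bump function $f$ supported in $U$ with $f$ not identically zero on $\sigma(N)$, so that $f(N) \neq 0$, and uses $\|f(N_n) - f(N)\| \to 0$ (valid for norm-convergent sequences of normal operators, via uniform polynomial approximation in $z$ and $\bar{z}$) to conclude $f(N_n) \neq 0$, hence $\sigma(N_n) \cap \mathrm{supp}(f) \neq \emptyset$, for all large $n$. Your argument instead extracts the uniform resolvent bound $\|(\lambda I_\mathfrak{A} - N_n)^{-1}\| = \mathrm{dist}(\lambda, \sigma(N_n))^{-1} \leq r^{-1}$ from normality and runs a Neumann-series perturbation to contradict $\lambda \in \sigma(N)$. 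What your approach buys is self-containedness: it avoids invoking the (Stone--Weierstrass-based) continuity of the functional calculus and isolates in one explicit identity the only place normality is used, which is a genuine clarification since the statement fails for non-normal approximants, where the spectrum is merely upper semicontinuous. The paper's approach is shorter to state but buries that same dependence on normality inside the quoted continuity. The only cosmetic caveat in your write-up is that the lemma is stated for a possibly non-unital C$^*$-algebra, so $\lambda I_\mathfrak{A} - N$ and the resolvents should be read in the unitization; this is exactly the convention the paper itself adopts implicitly and does not affect the argument.
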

\begin{proof}
The proof follows easily from the continuity of the Continuous Functional Calculus of Normal Operators.
\end{proof}
\begin{prop}
\label{aueuspik0}
Let $\mathfrak{A}$ be a unital, simple, purely infinite C$^*$-algebra such that $K_0(\mathfrak{A})$ is trivial.  Let $N_1, N_2 \in \mathfrak{A}$ be normal operators such that $\Gamma(N_1)$ and $\Gamma(N_2)$ are trivial.  Then $N_1 \sim_{au} N_2$ if and only if $\sigma(N_1) = \sigma(N_2)$.
\end{prop}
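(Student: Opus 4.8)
The plan is to dispose of the forward implication at once and then build a common finite-spectrum model for the converse. If $N_1 \sim_{au} N_2$ then, as recorded in the introductory remarks (semicontinuity of the spectrum forces equality of spectra for approximately unitarily equivalent operators), $\sigma(N_1) = \sigma(N_2)$, so only the converse requires work. Assume therefore that $X := \sigma(N_1) = \sigma(N_2)$ and fix $\epsilon > 0$. I would prove that there is a single normal operator $M$ with finite spectrum and unitaries $U_1, U_2 \in \mathcal{U}(\mathfrak{A})$ such that $\|U_i N_i U_i^* - M\| < 2\epsilon$ for $i = 1, 2$; the triangle inequality then gives $\|N_1 - (U_1^* U_2) N_2 (U_1^* U_2)^*\| < 4\epsilon$, and letting $\epsilon \to 0$ yields $N_1 \sim_{au} N_2$.

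To build the model, first choose finitely many points $x_1, \dots, x_m \in X$ that are $\epsilon$-dense in $X$. Since $\mathfrak{A}$ is unital, simple, and purely infinite, its unit is properly infinite, so I can write $I_\mathfrak{A} = \sum_{j=1}^m P_j$ as a sum of $m$ nonzero mutually orthogonal projections. Set $M := \sum_{j=1}^m x_j P_j$. Then $\sigma(M) = \{x_1, \dots, x_m\}$ is finite, so $K_1(C(\sigma(M))) = 0$ and $\Gamma(M)$ is automatically trivial; in particular $M$ is a legitimate finite-spectrum operator whose eigenvalue data is encoded by the chosen projections $P_j$.

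Next I would approximate each $N_i$ by the model up to unitary conjugation. Fix $\delta \in (0, \epsilon/2)$. Because $\Gamma(N_i)$ is trivial, Theorem \ref{linfa} furnishes a normal operator $M_i'$ with finite spectrum and $\|N_i - M_i'\| < \delta$; for normal operators this forces every point of $\sigma(M_i')$ to lie within $\delta$ of $X$. Rounding each spectral value of $M_i'$ to a nearest point $x_j$, using the spectral projections of $M_i'$, produces $M_i'' = \sum_{j=1}^m x_j Q_j^{(i)}$ with $\|M_i' - M_i''\| \le \delta + \epsilon$, where the $Q_j^{(i)}$ are orthogonal projections summing to $I_\mathfrak{A}$. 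The crucial bookkeeping point is that every $Q_j^{(i)}$ must be nonzero: since $x_j \in X = \sigma(N_i)$, a sufficiently small ball about $x_j$ (all of whose points round to $x_j$) meets $\sigma(N_i)$, so Lemma \ref{densespectra2} guarantees, for $\delta$ small, that $\sigma(M_i')$ meets that ball, contributing a spectral value which rounds to $x_j$ and hence forces $Q_j^{(i)} \neq 0$.

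Finally I would invoke the hypothesis $K_0(\mathfrak{A}) = 0$. In a unital, simple, purely infinite C$^*$-algebra two nonzero projections are Murray--von Neumann equivalent if and only if they have the same class in $K_0(\mathfrak{A})$ (see \cite{Cu}); as $K_0(\mathfrak{A})$ is trivial, all the nonzero projections $Q_j^{(i)}$ and $P_j$ are mutually equivalent. Choosing partial isometries $v_j^{(i)}$ with $(v_j^{(i)})^* v_j^{(i)} = Q_j^{(i)}$ and $v_j^{(i)} (v_j^{(i)})^* = P_j$ and setting $U_i := \sum_{j=1}^m v_j^{(i)}$ gives a unitary with $U_i M_i'' U_i^* = M$. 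Then $\|U_i N_i U_i^* - M\| \le \|N_i - M_i'\| + \|M_i' - M_i''\| \le 2\delta + \epsilon < 2\epsilon$, completing the argument. I expect the main obstacle to be precisely the matching of spectral projections across the two paragraphs above: one must simultaneously guarantee that no grid point is lost, so that $Q_j^{(i)}$ is never zero (this is the role of Lemma \ref{densespectra2}), and that the surviving projections can be lined up (this is exactly where the triviality of $K_0(\mathfrak{A})$ is indispensable, since with nontrivial $K_0$ two normal operators of equal spectrum could have non-matchable multiplicities).
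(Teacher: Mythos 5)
Your proposal is correct and follows essentially the same route as the paper: approximate each $N_i$ by a finite-spectrum normal operator via Theorem \ref{linfa}, perturb the eigenvalues onto a common $\epsilon$-net of the spectrum (using Lemma \ref{densespectra2} to ensure no net point is lost), and then use that triviality of $K_0(\mathfrak{A})$ makes all non-zero projections Murray--von Neumann equivalent, so the two finite-spectrum approximants are unitarily equivalent. Your write-up merely makes explicit, via the common model $M$ and the partial isometries $v_j^{(i)}$, the paper's one-line assertion that any two normal operators with the same finite spectrum are unitarily equivalent.
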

\begin{proof}
By previous discussions it is clear that $\sigma(N_1) = \sigma(N_2)$ if $N_1 \sim_{au} N_2$.  Suppose $\sigma(N_1) = \sigma(N_2)$.  Since $K_0(\mathfrak{A}) = \{0\}$, all non-trivial projections are Murray-von Neumann equivalent by {\cite[Theorem 1.4]{Cu}}.  Thus any two normal operators with the same finite spectrum are unitarily equivalent.
\par
By the assumption that $\Gamma(N_1)$ and $\Gamma(N_2)$ are trivial, $N_1$ and $N_2$ can be approximated by normal operators with finite spectrum by {\cite[Theorem 4.4]{Li}}.  By small perturbations using Lemma \ref{densespectra2} and the semicontinuity of the spectrum, we can assume that $N_1$ and $N_2$ can be approximated arbitrarily well by normal operators with the same finite spectrum.  Thus the result follows.
\end{proof}
Note the condition `$\Gamma(N_1)$ and $\Gamma(N_2)$ are trivial' holds when $\mathfrak{A}^{-1}_0 = \mathfrak{A}^{-1}$ or equivalently when $K_1(\mathfrak{A})$ is trivial (see {\cite[Theorem 1.9]{Cu}}).  
\par
If $\mathcal{O}_2$ is the Cuntz algebra generated by two isometries, $K_0(\mathcal{O}_2)$ and $K_1(\mathcal{O}_2)$ are trivial by {\cite[Theorem 3.7]{Cu}} and {\cite[Theorem 3.8]{Cu}} respectively.  Thus Proposition \ref{aueuspik0} completely classifies when two normal operators in $\mathcal{O}_2$ are approximately unitarily equivalent.
\begin{cor}
\label{o2results}
Let $N,M \in \mathcal{O}_2$ be normal operators.  Then $N \sim_{au} M$ if and only if $\sigma(N) = \sigma(M)$.
\end{cor}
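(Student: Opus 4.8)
The plan is to recognize that this corollary is an immediate specialization of Proposition \ref{aueuspik0}, so the entire task reduces to verifying that $\mathcal{O}_2$ meets the hypotheses of that proposition for \emph{every} pair of normal operators. First I would record the three structural facts about $\mathcal{O}_2$: it is a unital, simple, purely infinite C$^*$-algebra, and by the cited results of Cuntz both $K_0(\mathcal{O}_2)$ and $K_1(\mathcal{O}_2)$ are trivial. The first fact places us within the ambient framework of Proposition \ref{aueuspik0}, and the vanishing of $K_0(\mathcal{O}_2)$ directly supplies the $K_0$-hypothesis of that proposition.

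The one step that requires a word of explanation is why the index functions $\Gamma(N)$ and $\Gamma(M)$ are automatically trivial, with no restriction on $N$ and $M$. By definition $\Gamma(N)$ is a group homomorphism with codomain $K_1(\mathcal{O}_2)$. Since $K_1(\mathcal{O}_2) = \{0\}$, every such homomorphism is the zero map, so $\Gamma(N)$ is trivial regardless of the spectrum of $N$; the identical reasoning applies to $M$. Thus the hypotheses ``$\Gamma(N_1)$ and $\Gamma(N_2)$ are trivial'' in Proposition \ref{aueuspik0} are satisfied vacuously for any normal operators in $\mathcal{O}_2$.

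Having checked all hypotheses, I would simply invoke Proposition \ref{aueuspik0} to conclude that $N \sim_{au} M$ if and only if $\sigma(N) = \sigma(M)$. There is no genuine obstacle here: the content of the corollary lies entirely in Proposition \ref{aueuspik0} and in the computations of $K_*(\mathcal{O}_2)$, and the only thing one must notice is that the triviality of $K_1(\mathcal{O}_2)$ removes the index-function hypothesis for \emph{all} normal operators at once. The result is therefore best presented as a short two- or three-sentence deduction rather than as an independent argument.
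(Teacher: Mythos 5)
Your proposal is correct and matches the paper's own (implicit) argument: the paper likewise notes just before the corollary that triviality of $K_1(\mathfrak{A})$ forces $\Gamma(N)$ to be trivial for every normal $N$, cites Cuntz's computations that $K_0(\mathcal{O}_2)$ and $K_1(\mathcal{O}_2)$ vanish, and then invokes Proposition \ref{aueuspik0}. No gaps.
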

Note that the proof of Proposition \ref{aueuspik0} is easily modified to a more general setting.  To see this, we recall the following definitions.
\begin{defn}
Let $\mathfrak{A}$ be a unital C$^*$-algebra.  We say that $\mathfrak{A}$ has the finite normal property (property (FN)) if every normal operator in $\mathfrak{A}$ is the limit of normal operators from $\mathfrak{A}$ with finite spectrum.  We say that $\mathfrak{A}$ has the weak finite normal property (property weak (FN)) if every normal operator $N \in \mathfrak{A}$ such that $\lambda I_\mathfrak{A} - N \in \mathfrak{A}^{-1}_0$ for all $\lambda \notin \sigma(N)$ is the limit of normal operators from $\mathfrak{A}$ with finite spectrum.
\end{defn}
\begin{cor}
\label{aueios}
Let $\mathfrak{A}$ be a unital C$^*$-algebra such that $\mathfrak{A}$ has property weak (FN) and any two non-zero projections in $\mathfrak{A}$ are Murray-von Neumann equivalent.  If $N_1,N_2 \in \mathfrak{A}$ are two normal operators such that $\lambda I_\mathfrak{A} - N_q \in \mathfrak{A}^{-1}_0$ for all $\lambda \notin \sigma(N_q)$ and $q \in \{1,2\}$ then $N_1 \sim_{au} N_2$ if and only if $\sigma(N_1) = \sigma(N_2)$.
\end{cor}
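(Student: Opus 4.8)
The plan is to follow the proof of Proposition \ref{aueuspik0} almost verbatim, substituting the two new hypotheses for the two structural facts that were used there. The forward implication is immediate: if $N_1 \sim_{au} N_2$ then $\sigma(N_1) = \sigma(N_2)$ by the semicontinuity of the spectrum already recorded for approximately unitarily equivalent operators, and this requires none of the standing assumptions. So the work lies entirely in the converse, and the two places where Proposition \ref{aueuspik0} invoked $K_0(\mathfrak{A}) = \{0\}$ (via {\cite[Theorem 1.4]{Cu}}) and Lin's Theorem \ref{linfa} must be replaced by, respectively, the Murray-von Neumann equivalence of nonzero projections and property weak (FN).

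First I would handle the finite-spectrum case, namely that two normal operators with the same finite spectrum are unitarily equivalent. Write $N = \sum_{i=1}^k \lambda_i P_i$ and $M = \sum_{i=1}^k \lambda_i Q_i$, where $\{\lambda_1, \dots, \lambda_k\} = \sigma(N) = \sigma(M)$ and the $P_i$, $Q_i$ are the spectral projections, each forming a partition of unity by nonzero projections (nonzero precisely because each $\lambda_i$ lies in the common spectrum). The hypothesis that any two nonzero projections are Murray-von Neumann equivalent yields partial isometries $V_i$ with $V_i^* V_i = P_i$ and $V_i V_i^* = Q_i$. Setting $U = \sum_{i=1}^k V_i$ gives $U^* U = \sum_i P_i = I_\mathfrak{A}$ and $U U^* = \sum_i Q_i = I_\mathfrak{A}$, so $U$ is a unitary with $U P_i U^* = Q_i$, whence $U N U^* = M$. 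This is the exact analogue of the step ``any two normal operators with the same finite spectrum are unitarily equivalent'' in Proposition \ref{aueuspik0}, now derived from the projection hypothesis rather than from triviality of $K_0$.

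Next I would reduce the general case to the finite-spectrum case. Since $\lambda I_\mathfrak{A} - N_q \in \mathfrak{A}^{-1}_0$ for every $\lambda \notin \sigma(N_q)$ and $q \in \{1,2\}$, property weak (FN) provides normal operators with finite spectrum converging to $N_1$ and to $N_2$. The remaining task, which is where I expect the only genuine friction, is to arrange that these finite-spectrum approximants can be taken to share the \emph{same} finite spectrum. Here I would fix a finite $\varepsilon$-net $\{\mu_1, \dots, \mu_m\} \subseteq \sigma(N_1) = \sigma(N_2)$ of the common spectrum, take finite-spectrum normals $M_1, M_2$ within $\varepsilon$ of $N_1, N_2$, and retract their spectra onto the net by a locally constant functional calculus. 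Lemma \ref{densespectra2} applied to a small neighbourhood of each net point $\mu_j$ guarantees that, for approximants sufficiently close to $N_q$, the spectrum meets that neighbourhood, so after retraction both operators have spectrum exactly $\{\mu_1, \dots, \mu_m\}$; the semicontinuity of the spectrum keeps the retracted operators within a controlled distance of $N_1$ and $N_2$. This is precisely the sentence ``By small perturbations using Lemma \ref{densespectra2} and the semicontinuity of the spectrum'' in Proposition \ref{aueuspik0}. Combining this with the finite-spectrum unitary equivalence from the previous step shows that $N_1$ and $N_2$ are each approximated arbitrarily well by unitarily equivalent operators, giving $N_1 \sim_{au} N_2$ and completing the argument.
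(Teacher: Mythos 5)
Your proposal is correct and follows exactly the route the paper intends: the paper gives no separate proof of this corollary, stating only that the proof of Proposition \ref{aueuspik0} is easily modified, and your modification---replacing the $K_0(\mathfrak{A})=\{0\}$ step with the hypothesis that nonzero projections are Murray--von Neumann equivalent, and replacing Lin's theorem with property weak (FN) applied via the assumption $\lambda I_\mathfrak{A} - N_q \in \mathfrak{A}^{-1}_0$---is precisely the intended one. The explicit construction of the intertwining unitary from partial isometries and the retraction of spectra onto a common $\varepsilon$-net via Lemma \ref{densespectra2} correctly fill in the details the paper leaves implicit.
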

\begin{cor}
Let $\mathfrak{A}$ be a unital C$^*$-algebra such that $\mathfrak{A}$ has property (FN) and any two non-zero projections in $\mathfrak{A}$ are Murray-von Neumann equivalent.  If $N_1,N_2 \in \mathfrak{A}$ are two normal operators then $N_1 \sim_{au} N_2$ if and only if $\sigma(N_1) = \sigma(N_2)$.
\end{cor}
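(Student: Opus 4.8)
The plan is to deduce this corollary directly from Corollary \ref{aueios}, by checking that property (FN) is strong enough to make the index-type hypotheses in that statement automatic. First, property (FN) trivially implies property weak (FN), since the latter only demands finite-spectrum approximation for a restricted class of normal operators. So the only thing I need to verify is that, under property (FN), every normal operator $N \in \mathfrak{A}$ already satisfies $\lambda I_\mathfrak{A} - N \in \mathfrak{A}^{-1}_0$ for all $\lambda \notin \sigma(N)$; granting this, Corollary \ref{aueios} applies to $N_1$ and $N_2$ without further conditions. As usual the forward implication is immediate, since approximately unitarily equivalent operators have equal spectra, as recorded in the introductory discussion.

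The key observation I would establish is that a normal operator with finite spectrum has trivial connecting behaviour. Indeed, writing such an operator as $M = \sum_{i=1}^k \mu_i P_i$ with pairwise orthogonal projections $P_i$ summing to $I_\mathfrak{A}$, for $\lambda \notin \sigma(M)$ one has $\lambda I_\mathfrak{A} - M = \sum_{i=1}^k (\lambda - \mu_i) P_i$ with every scalar $\lambda - \mu_i$ nonzero. Choosing for each $i$ a continuous path in $\mathbb{C} \setminus \{0\}$ from $\lambda - \mu_i$ to $1$ and substituting it coordinatewise yields a norm-continuous path of invertibles from $\lambda I_\mathfrak{A} - M$ to $\sum_{i=1}^k P_i = I_\mathfrak{A}$, so $\lambda I_\mathfrak{A} - M \in \mathfrak{A}^{-1}_0$. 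For a general normal $N$ and $\lambda \notin \sigma(N)$, I would then pick normal operators $M_n$ with finite spectrum and $N = \lim_{n\to\infty} M_n$ by property (FN); since $\lambda I_\mathfrak{A} - N$ is invertible, $\lambda I_\mathfrak{A} - M_n$ is invertible for all large $n$, hence lies in $\mathfrak{A}^{-1}_0$ by the above, and therefore so does the limit $\lambda I_\mathfrak{A} - N$.

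The only point that will require a little care is this last passage to the limit: it uses that $\mathfrak{A}^{-1}_0$, being the connected component of the identity in the open (hence locally path-connected) group $\mathfrak{A}^{-1}$, is relatively closed in $\mathfrak{A}^{-1}$, so that an invertible limit of elements of $\mathfrak{A}^{-1}_0$ again lies in $\mathfrak{A}^{-1}_0$. With the hypothesis $\lambda I_\mathfrak{A} - N_q \in \mathfrak{A}^{-1}_0$ thereby verified for $q \in \{1,2\}$ and property weak (FN) in hand, Corollary \ref{aueios} gives $N_1 \sim_{au} N_2$ if and only if $\sigma(N_1) = \sigma(N_2)$, completing the argument. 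No step is genuinely hard; the substance is simply that property (FN) absorbs the index condition that had to be imposed by hand in Corollary \ref{aueios}.
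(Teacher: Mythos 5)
Your argument is correct. The paper itself offers no written proof of this corollary; it is presented, together with Corollary \ref{aueios}, as an instance of the remark that ``the proof of Proposition \ref{aueuspik0} is easily modified,'' i.e.\ the intended argument is to rerun that proof directly: approximate $N_1$ and $N_2$ by finite-spectrum normals (which property (FN) supplies with no side condition), perturb via Lemma \ref{densespectra2} so the two approximants share the same finite spectrum, and use the Murray--von Neumann equivalence of all non-zero projections to conclude the approximants are unitarily equivalent. You instead reduce to Corollary \ref{aueios}, which obliges you to verify the hypothesis $\lambda I_\mathfrak{A} - N \in \mathfrak{A}^{-1}_0$ for all $\lambda \notin \sigma(N)$; your verification --- the explicit path $t \mapsto \sum_i \gamma_i(t)P_i$ for finite-spectrum normals, followed by passage to the limit using that $\mathfrak{A}^{-1}_0$ is clopen in $\mathfrak{A}^{-1}$ --- is sound and is essentially the easy direction of Lin's Theorem \ref{linfa} in this abstract setting. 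The trade-off: your route is slightly longer but makes explicit the (true and worth recording) fact that property (FN) forces the index-type condition to hold for every normal operator, whereas the paper's route never needs that fact because the finite-spectrum approximants are used directly rather than fed through a hypothesis. Either proof is acceptable.
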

\begin{cor}
Let $\mathfrak{M}$ be a type III factor with separable predual and let $N_1, N_2 \in \mathfrak{M}$ be normal operators.  Then $N_1 \sim_{au} N_2$ if and only if $\sigma(N_1) = \sigma(N_2)$.
\end{cor}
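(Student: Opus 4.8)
The plan is to derive this corollary from the immediately preceding one by checking its two hypotheses for a type III factor $\mathfrak{M}$: that $\mathfrak{M}$ has property (FN) and that any two non-zero projections in $\mathfrak{M}$ are Murray--von Neumann equivalent. Once both are in hand the conclusion is immediate.

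I would first verify property (FN), noting that in fact \emph{every} von Neumann algebra enjoys it, so the separability of the predual is not used at this stage. Let $N \in \mathfrak{M}$ be normal. Since $\mathfrak{M}$ is weakly closed, the Borel functional calculus applies and the spectral measure $E$ of $N$ takes its values among the projections of $\mathfrak{M}$. Given $\epsilon > 0$, I would partition $\sigma(N)$ into finitely many disjoint Borel sets $E_1, \dots, E_k$, each of diameter less than $\epsilon$, select $\lambda_j \in E_j$, and put
\[
M = \sum_{j=1}^{k} \lambda_j \, E(E_j).
\]
Because the $E(E_j)$ are mutually orthogonal projections in $\mathfrak{M}$ summing to $I_\mathfrak{M}$, the operator $M$ is normal with finite spectrum and satisfies $\left\| N - M \right\| \leq \max_{j} \sup_{z \in E_j} |z - \lambda_j| < \epsilon$. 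Letting $\epsilon \to 0$ realizes $N$ as a norm limit of normal operators with finite spectrum.

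For the second hypothesis I would invoke the structure theory of factors: in a type III factor every non-zero projection is Murray--von Neumann equivalent to the identity $I_\mathfrak{M}$, and hence, by transitivity, any two non-zero projections are equivalent. This rests on the comparison theorem for factors together with the defining property of type III factors, namely the absence of non-zero finite projections. Having verified both hypotheses, I would apply the preceding corollary to conclude that $N_1 \sim_{au} N_2$ if and only if $\sigma(N_1) = \sigma(N_2)$.

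I do not anticipate a genuine obstacle here: the substantive operator-theoretic work was already done in Proposition \ref{aueuspik0} and its abstraction, and what remains is only the verification of hypotheses. The sole point that calls for care is the equivalence of non-zero projections, where one should cite the comparison theory of type III factors rather than attempt to reprove it; the spectral approximation giving property (FN) is entirely routine.
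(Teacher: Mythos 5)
Your proposal is correct and is exactly the route the paper intends: the corollary is stated without proof precisely because it follows from the preceding corollary once one notes that every von Neumann algebra has property (FN) via the spectral measure, and that all non-zero projections in a type III factor with separable predual are Murray--von Neumann equivalent (to $I_\mathfrak{M}$). The only point worth flagging is that the latter fact is where the separable predual (countable decomposability) is actually used --- for a non-$\sigma$-finite type III factor not all non-zero projections are equivalent --- but since you work under the stated hypothesis this is not a gap.
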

Our next task is to provide an operator theoretic proof of {\cite[Theorem 1.7]{Dad}} when $K_0(\mathfrak{A})$ is non-trivial yet $K_1(\mathfrak{A})$ is trivial.  The Cuntz algebras, $\mathcal{O}_n$, generated by $n \in \mathbb{N} \cup \{\infty\}$ isometries (where $K_0(\mathcal{O}_n) = \mathbb{Z}_{n-1}$ and $K_1(\mathcal{O}_n)$ is trivial by {\cite[Theorem 3.7]{Cu}} and {\cite[Theorem 3.8]{Cu}} respectively) are excellent examples of such algebras.  We begin with the case that our two normal operators have the same connected spectrum.  The following lemma is motivated by the proof of {\cite[Theorem 2.8]{Sk}} and contains the essential ideas used in main result of this section (Theorem \ref{ausiuspi}) and in Section \ref{sec:DISTANCEUNITARYORBITS}.
\begin{lem}
\label{backandforth}
Let $\mathfrak{A}$ be a unital, simple, purely infinite C$^*$-algebra and let $N_1, N_2 \in \mathfrak{A}$ be normal operators.  Suppose that $\Gamma(N_1)$ and $\Gamma(N_2)$ are trivial, $\sigma(N_1) = \sigma(N_2)$, and $\sigma(N_1)$ is connected.  Then $N_1 \sim_{au} N_2$.
\end{lem}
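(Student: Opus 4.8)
The plan is to reduce the problem to comparing two finite-spectrum approximants and then to move the discrepancy in their $K_0$-data around the spectrum, using its connectedness. First, since $\Gamma(N_1)$ and $\Gamma(N_2)$ are trivial, Theorem \ref{linfa} lets me approximate each $N_q$ within $\epsilon$ by a normal operator with finite spectrum. Exactly as in the proof of Proposition \ref{aueuspik0}, small perturbations justified by Lemma \ref{densespectra2} and the semicontinuity of the spectrum allow me to assume these approximants share a common finite spectrum $\{\lambda_1, \dots, \lambda_k\}$ which is an $\epsilon$-net of $X := \sigma(N_1) = \sigma(N_2)$; write $M_1 = \sum_j \lambda_j P_j$ and $M_2 = \sum_j \lambda_j Q_j$ with each $P_j, Q_j$ nonzero and $\sum_j P_j = \sum_j Q_j = I_\mathfrak{A}$. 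Two finite-spectrum normal operators with the same eigenvalues are unitarily equivalent precisely when their corresponding spectral projections are Murray--von Neumann equivalent, i.e. (by {\cite[Theorem 1.4]{Cu}}) when $[P_j]_0 = [Q_j]_0$ for every $j$. Thus the whole problem becomes: perturb $M_2$ slightly to an $M_2'$ whose spectral projections have the same $K_0$-classes as those of $M_1$.

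To do this I would exploit connectedness. Form the graph $G$ on $\{1, \dots, k\}$ joining $j$ and $j'$ whenever $|\lambda_j - \lambda_{j'}| < 2\epsilon$; since the balls $B(\lambda_j, \epsilon)$ cover the connected set $X$ while balls about non-adjacent centres are disjoint, $G$ is connected. The discrepancy $d_j := [P_j]_0 - [Q_j]_0$ satisfies $\sum_j d_j = 0$ in $K_0(\mathfrak{A})$, so on the connected graph $G$ there is a $K_0(\mathfrak{A})$-valued flow $f$, supported on the edges of a spanning tree, whose divergence at each vertex equals $d_j$. I realize this flow operator-theoretically: along an edge from $j'$ to $j$ carrying class $c$, I split off a subprojection $R \leq Q_{j'}$ with $[R]_0 = c$ and reassign it to the eigenvalue $\lambda_j$. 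Here the purely infinite, simple structure is essential: every nonzero projection is properly infinite and full, so each $Q_{j'}$ decomposes into mutually orthogonal subprojections realizing any prescribed classes summing to $[Q_{j'}]_0$, with room to spare. Carrying this out at every vertex simultaneously produces mutually orthogonal nonzero projections $Q_j'$ summing to $I_\mathfrak{A}$ with $[Q_j']_0 = [Q_j]_0 + d_j = [P_j]_0$, where pure infiniteness lets me keep each $Q_j'$ nonzero even when its class is $0$.

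The crucial point, which I expect to be the main obstacle, is keeping $\|M_2 - M_2'\|$ small: naively transporting a single projection along a long tree-path would displace it by up to the diameter of $X$. The remedy is to relay fresh, mutually orthogonal subprojections — one out of each $Q_{j_i}$ along a path $j_0, \dots, j_m$ — instead of dragging one projection the whole way, so that $M_2' - M_2$ is a sum of mutually orthogonal terms $(\lambda_j - \lambda_{j'}) R_e$ indexed by the tree edges $e$. Consequently $\|M_2 - M_2'\| = \max_e |\lambda_j - \lambda_{j'}| < 2\epsilon$, with no accumulation over the (arbitrarily many) edges. Now $M_1$ and $M_2'$ have the same eigenvalue set $\{\lambda_1, \dots, \lambda_k\}$ and, at each $\lambda_j$, nonzero spectral projections $P_j$ and $Q_j'$ that are Murray--von Neumann equivalent by {\cite[Theorem 1.4]{Cu}} since $[P_j]_0 = [Q_j']_0$. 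Choosing partial isometries $V_j$ with $V_j^* V_j = P_j$ and $V_j V_j^* = Q_j'$ and setting $U = \sum_j V_j$ gives a unitary with $U M_1 U^* = M_2'$, whence $\|U N_1 U^* - N_2\| \leq \|N_1 - M_1\| + \|M_2' - M_2\| + \|M_2 - N_2\|$ is of order $\epsilon$. Letting $\epsilon \to 0$ yields $N_1 \sim_{au} N_2$.
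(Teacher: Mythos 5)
Your argument is correct, and it rests on the same pillars as the paper's proof: Theorem \ref{linfa} to reduce to finite-spectrum approximants whose spectra form an $\epsilon$-net, connectedness of $\sigma(N_1)$ to make the adjacency graph on the eigenvalues connected, Cuntz's theorem ({\cite[Theorem 1.4]{Cu}}) that nonzero projections in a unital, simple, purely infinite C$^*$-algebra are Murray--von Neumann equivalent exactly when they share a $K_0$-class, and the discipline of moving subprojections only between adjacent eigenvalues so that the perturbation stays of order $\epsilon$. Where you genuinely differ is in the bookkeeping. The paper runs a sequential ``back and forth'' along a spanning tree: at each leaf it splits a spectral projection of $M_1$ against one of $M_2$, passes the residue to the adjacent vertex, prunes the leaf, and closes the induction with a single $K_0$ computation showing the last two residues are equivalent --- so the spectral projections of \emph{both} operators get subdivided. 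You instead leave $M_1$ untouched, record the discrepancies $d_j=[P_j]_0-[Q_j]_0$, solve globally for a $K_0(\mathfrak{A})$-valued flow on a spanning tree with divergence $d_j$ (possible since $\sum_j d_j=0$), and realize that flow in one pass by reassigning mutually orthogonal subprojections of the $Q_j$; this one-sided redistribution is legitimate precisely because in a simple purely infinite algebra every $K_0$-class is achieved by a nonzero proper subprojection of every nonzero projection, so no subequivalence constraint can obstruct it. Your observation that $M_2'-M_2$ is a sum of \emph{orthogonal} terms $(\lambda_j-\lambda_{j'})R_e$, hence of norm less than $2\epsilon$ no matter how many edges the tree has, is exactly the point that makes either version work; it is the flow analogue of the paper's requirement that each residue only ever travels to an adjacent vertex. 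The flow formulation is arguably cleaner and makes the role of $\sum_j d_j=0$ (which appears in the paper only as the final telescoping $K_0$ identity for $R^{(1)}_n$ and $Q^{(2)}_n$) completely transparent, at the cost of being slightly further from the bipartite-graph variant that the paper reuses in Section \ref{sec:DISTANCEUNITARYORBITS}.
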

\begin{proof}
We shall begin with the case that $\sigma(N_1) = \sigma(N_2) = [0,1]$ and then modify the proof for the general case.
\par
Suppose $\sigma(N_1) = [0,1] = \sigma(N_2)$.  Let $\epsilon > 0$ and choose $n \in \mathbb{N}$ such that $\frac{1}{n} < \epsilon$.  By {\cite[Theorem 4.4]{Li}} (or the fact that unital, simple, purely infinite C$^*$-algebras have real rank zero (see {\cite[Theorem V.7.4]{Da4}})), by Lemma \ref{densespectra2}, by the semicontinuity of the spectrum, and by perturbing eigenvalues, there exists two collections of non-zero, pairwise orthogonal projections 
\[
\left\{P^{(1)}_j\right\}^n_{j=0} \mbox{ and }\left\{P^{(2)}_j\right\}^n_{j=0}
\]
in $\mathfrak{A}$ such that
\[
\sum^n_{j=0} P^{(q)}_j = I_\mathfrak{A} \mbox{ and } \left\|N_q - \sum^n_{j=0} \frac{j}{n} P^{(q)}_j\right\| < 2\epsilon
\]
for all $q \in \{1,2\}$.  The idea of the proof is to apply a `back and forth' argument to produce a unitary that intertwines the approximations of $N_1$ and $N_2$.
\par
Since $\mathfrak{A}$ is a unital, simple, purely infinite C$^*$-algebra, $P^{(1)}_0$ is Murray-von Neumann equivalent to a proper subprojection of $P^{(2)}_0$.  Thus we can write $P^{(2)}_0 = Q^{(2)}_0 + R^{(2)}_0$ where $Q^{(2)}_0$ and $R^{(2)}_0$ are non-zero orthogonal projections in $\mathfrak{A}$ such that $Q^{(2)}_0$ and $P^{(1)}_0$ are Murray-von Neumann equivalent.  Furthermore $R^{(2)}_0$ is Murray-von Neumann equivalent to a proper subprojection of $P^{(1)}_1$.  Thus we can write $P^{(1)}_1 = Q^{(1)}_1 + R^{(1)}_1$ where $Q^{(1)}_1$ and $R^{(1)}_1$ are non-zero orthogonal projections in $\mathfrak{A}$ such that $Q^{(1)}_1$ and $R^{(2)}_0$ are Murray-von Neumann equivalent.
\par
For notional purposes, let $Q^{(1)}_0 := 0$, $R^{(1)}_0 := P^{(1)}_0$, $Q^{(2)}_n := P^{(2)}_n$, and $R^{(2)}_n := 0$.  By repeating this procedure (using $R^{(1)}_1$ in place of $P^{(1)}_0$), we obtain sets of non-zero, pairwise orthogonal projections 
\[
\left\{Q^{(1)}_j, R^{(1)}_j\right\}^n_{j=1} \mbox{ and } \left\{Q^{(2)}_j, R^{(2)}_j\right\}^{n-1}_{j=0}
\]
such that $P^{(q)}_j = Q^{(q)}_j + R^{(q)}_j$ for all $j \in \{0,\ldots, n\}$ and $q \in \{1,2\}$, $R^{(2)}_j$ is Murray-von Neumann equivalent to $Q^{(1)}_{j+1}$ for all $j \in \{0,\ldots, n-1\}$, and $R^{(1)}_j$ is Murray-von Neumann equivalent to $Q^{(2)}_j$ for all $j \in \{0,\ldots, n-1\}$.  Since
\[
I_\mathfrak{A} = \sum^n_{j=0} Q^{(1)}_j + R^{(1)}_j = \sum^{n}_{j=0} Q^{(2)}_j + R^{(2)}_j, \,\,\,\,\,\,\,\,(*)
\]
we note that
\[
\begin{array}{rcl}
\left[R^{(1)}_n\right]_0 &=& \left[I_\mathfrak{A}\right]_0 - \sum^n_{j=1}\left[Q^{(1)}_j\right]_0 - \sum^{n-1}_{j=0} \left[R^{(1)}_j\right]_0  \\
 &=& \left[I_\mathfrak{A}\right]_0 - \sum^n_{j=1}\left[R^{(2)}_{j-1}\right]_0 - \sum^{n-1}_{j=0} \left[Q^{(2)}_j\right]_0 \\
 &=&  \left[Q^{(2)}_n\right]_0.
 \end{array} 
\]
Hence $R^{(1)}_n$ and $Q^{(2)}_n$ are Murray-von Neumann equivalent by {\cite[Theorem 1.4]{Cu}}.
\par
Let $\{V_j\}^n_{j=0}\cup\{W_j\}^{n-1}_{j=0}$ be partial isometries in $\mathfrak{A}$ such that $V_j^*V_j = R^{(1)}_j$ and $V_jV_j^* = Q^{(2)}_j$ for all $j \in \{0,\ldots, n\}$, and $W_j^*W_j = Q^{(1)}_{j+1}$ and $W_jW_j^* = R^{(2)}_j$ for all $j \in \{0,\ldots, n-1\}$.  Hence $(*)$ implies that
\[
U := \sum^n_{j=0} V_j + \sum^{n-1}_{j=0} W_j 
\]
is a unitary operator in $\mathfrak{A}$.  Moreover
\[
\begin{array}{rcl}
 U^*\left(\sum^n_{j=0} \frac{j}{n} P^{(2)}_j\right)U &=& U^*\left(\sum^{n}_{j=0} \frac{j}{n}Q^{(2)}_j + \sum^{n}_{j=0} \frac{j}{n}R^{(2)}_j\right)U \\
 &=& \sum^{n}_{j=0} \frac{j}{n}R^{(1)}_j + \sum^{n-1}_{j=0} \frac{j}{n}Q^{(1)}_{j+1} .
 \end{array} 
\]
Hence, since
\[
\sum^n_{j=0} \frac{j}{n} P^{(1)}_j = \sum^n_{j=0} \frac{j}{n} Q^{(1)}_j + \sum^n_{j=0} \frac{j}{n} R^{(1)}_j,
\]
we obtain that
\[
\left\|N_1 - U^*N_2U\right\| \leq 5\epsilon.
\]
Since $\epsilon > 0$ was arbitrary, $N_1 \sim_{au} N_2$.
\par
To complete the general case, we will use a technique similar to that used in the proof of {\cite[Theorem 2.8]{Sk}}.  To begin, let $N_1$ and $N_2$ be as in the statement of the lemma.  Fix $\epsilon > 0$ and for each $(n,m) \in \mathbb{Z}^2$ let 
\[
B_{n,m} := \left(\epsilon n - \frac{\epsilon}{2}, \epsilon n + \frac{\epsilon}{2} \right]  + i \left( \epsilon m - \frac{\epsilon}{2}, \epsilon m + \frac{\epsilon}{2} \right] \subseteq \mathbb{C}.
\]
Thus the sets $B_{n,m}$ partition the complex plane into a grid with side-lengths $\epsilon$.
\par
For each $(n,m) \in \mathbb{Z}^2$ we label the box $B_{n,m}$ relevant if $\sigma(N_1) \cap B_{n,m} \neq \emptyset$ and we will say two boxes are adjacent if their union is connected.  Since $\sigma(N_1)$ is connected, the union of the relevant boxes is connected.
\par
By {\cite[Theorem 4.4]{Li}} we can approximate $N_1$ and $N_2$ within $\epsilon$ by normal operators $M_1$ and $M_2$ in $\mathfrak{A}$ with finite spectrum.  By Lemma \ref{densespectra2}, by the semicontinuity of the spectrum, and by perturbing eigenvalues, we can assume that $\sigma(M_q)$ is precisely the centres of the relevant boxes and $\left\|N_q - M_q\right\| \leq 2\epsilon$ for all $q \in \{1,2\}$.  
\par
We claim that there exists a unitary $U \in \mathfrak{A}$ such that $\left\|M_1 - U^*M_2U\right\| \leq \sqrt{2}\epsilon$.  Consider a tree $\mathcal{T}$ in $\mathbb{C}$ whose vertices are the centres of the relevant boxes and whose edges are straight lines that connect vertices in adjacent relevant boxes.  Consider a leaf of $\mathcal{T}$.  We can identify this leaf with the spectral projections of $M_1$ and $M_2$ corresponding to the eigenvalue defined by the vertex.  We can then apply the `back and forth' technique illustrated above to embed the spectral projection of $M_1$ under the corresponding spectral projection of $M_2$ and the remaining spectral projection of $M_2$ under a spectral projection of $M_1$ corresponding to the adjacent vertex of the leaf (which is within $\sqrt{2}\epsilon$).  By considering $\mathcal{T}$ with the above leaf removed, we then have a smaller tree.  By continually repeating this `back and forth'-crossing technique, we are eventually left with the trivial tree.  As before, $K$-theory implies the remaining projections are Murray-von Neumann equivalent.  It is then possible to use the partial isometries from the `back and forth' construction to create a unitary with the desired properties.
\end{proof}
Our next goal is to remove the condition `$\sigma(N_1)$ is connected' from Lemma \ref{backandforth}.  Unfortunately, two normal operators having equal spectrum is not enough to guarantee that the normal operators are approximately unitarily equivalent (even in the case that $K_1(\mathfrak{A})$ is trivial).  The technicality is the same as why two projections in $\mathcal{B}(\mathcal{H})$ are not always approximately unitarily equivalent.  To see this, we note the following lemmas.
\begin{lem}
\label{simorbitofprojections}
Let $\mathfrak{A}$ be a unital C$^*$-algebra and let $P,Q \in \mathfrak{A}$ be projections.  If there exists an element $V \in \mathfrak{A}^{-1}$ such that
\[
\left\|Q - VPV^{-1}\right\| < \frac{1}{2}
\]
then $P$ and $Q$ are Murray-von Neumann equivalent.
\end{lem}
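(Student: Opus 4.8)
The plan is to pass from the idempotent $E := VPV^{-1}$, which satisfies $E^2 = E$ and $\|Q - E\| < \frac{1}{2}$, to a genuine projection $\tilde{P}$ built out of the range of $E$, and then to establish the two Murray--von Neumann equivalences $P \sim \tilde{P}$ and $\tilde{P} \sim Q$ by completely separate arguments. To produce $\tilde{P}$, I would take the polar decomposition of $a := VP$. Since $a^*a = PV^*VP \geq \|V^{-1}\|^{-2} P$, the positive element $|a| = (a^*a)^{1/2}$ is invertible in the corner $P\mathfrak{A}P$, so $u := a|a|^{-1}$ is a genuine partial isometry in $\mathfrak{A}$ with $u^*u = P$ and $uu^* =: \tilde{P}$ a projection; this already gives $P \sim \tilde{P}$. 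The computation $VP = VPV^{-1}V = EV$ identifies $\tilde{P}$ as the range projection of $E$, and in particular one checks directly that $\tilde{P}E = E$ and $E\tilde{P} = \tilde{P}$.

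The hard part, and the reason one cannot simply polar-decompose $QVP$ to obtain the equivalence in a single step, is that $V$ may be badly conditioned: although $\|E\| < \tfrac{3}{2}$ (because $E$ is within $\tfrac{1}{2}$ of the projection $Q$), the element $PV^*QVP$ need not be invertible in $P\mathfrak{A}P$ unless the condition number of $V$ is close to $1$. Routing through $\tilde{P}$ is precisely what decouples the conditioning-sensitive similarity $P \sim \tilde{P}$, which is valid for an arbitrary invertible $V$, from a conditioning-free proximity estimate between the two projections $\tilde{P}$ and $Q$.

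For that proximity estimate I would use the standard identity $\|\tilde{P} - Q\| = \max\{\|(1-\tilde{P})Q\|, \|(1-Q)\tilde{P}\|\}$ for projections. Using $(1-\tilde{P})E = 0$ gives $(1-\tilde{P})Q = (1-\tilde{P})(Q-E)$, whence $\|(1-\tilde{P})Q\| \leq \|Q - E\|$; using $\tilde{P} = E\tilde{P}$ together with $(1-Q)Q = 0$ gives $(1-Q)\tilde{P} = (1-Q)(E-Q)\tilde{P}$, whence $\|(1-Q)\tilde{P}\| \leq \|Q - E\|$. Therefore $\|\tilde{P} - Q\| \leq \|Q - E\| < \frac{1}{2} < 1$. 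Since two projections at distance strictly less than $1$ are unitarily equivalent (via the standard unitary extracted from $\tilde{P}Q + (1-\tilde{P})(1-Q)$), in particular $\tilde{P} \sim Q$.

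Finally, transitivity of Murray--von Neumann equivalence yields $P \sim \tilde{P} \sim Q$, which is the assertion. The main technical point to verify carefully is the conditioning-free nature of the corner estimates above; the hypothesis $\|Q - VPV^{-1}\| < \frac{1}{2}$ is comfortably enough for them to deliver $\|\tilde{P} - Q\| < 1$, so that the close-projection step applies.
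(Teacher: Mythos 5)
Your argument is correct, but it takes a genuinely different route from the paper's. The paper works directly with the idempotent $P_0 := VPV^{-1}$: it forms $Z := P_0Q + (I_\mathfrak{A} - P_0)(I_\mathfrak{A} - Q)$, checks $\left\|Z - I_\mathfrak{A}\right\| \leq 2\left\|P_0 - Q\right\| < 1$, shows that the unitary $U$ in the polar decomposition of $Z$ satisfies $UQU^* = P_0$, and then observes $Q = (U^*V)P(U^*V)^{-1}$, so the polar part of $U^*V$ is a unitary implementing $P \sim_u Q$. You instead interpose the range projection $\tilde{P}$ of the idempotent $E = VPV^{-1}$, obtained from the polar decomposition of $VP$; the partial isometry there gives $P \sim \tilde{P}$ with no norm hypothesis at all, and the corner identities $(I_\mathfrak{A}-\tilde{P})E = 0$ and $E\tilde{P} = \tilde{P}$, combined with $\|\tilde{P}-Q\| = \max\{\|(I_\mathfrak{A}-\tilde{P})Q\|,\|(I_\mathfrak{A}-Q)\tilde{P}\|\}$, give $\|\tilde{P}-Q\| \leq \|E - Q\|$, after which the standard close-projections lemma finishes. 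All the individual steps check out: $a^*a = PV^*VP \geq \|V^{-1}\|^{-2}P$ is indeed invertible in the corner $P\mathfrak{A}P$, and $\tilde{P}E = E$, $E\tilde{P}=\tilde{P}$ follow from $VP = EV$. Two remarks on what each approach buys: the paper's triangle-inequality estimate on $\left\|Z - I_\mathfrak{A}\right\|$ is exactly what forces the hypothesis $< \frac{1}{2}$, whereas your route only needs $\|\tilde{P} - Q\| < 1$, so it in fact proves the lemma under the weaker hypothesis $\left\|Q - VPV^{-1}\right\| < 1$; on the other hand, the paper's argument delivers the nominally stronger conclusion that $P$ and $Q$ are unitarily equivalent, while your chain $P \sim \tilde{P} \sim_u Q$ certifies only Murray--von Neumann equivalence, which is all the statement asks for and all that is used later.
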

\begin{proof}
Let $P_0 := VPV^{-1} \in \mathfrak{A}$ and let $Z := P_0Q + (I_\mathfrak{A} - P_0)(I_\mathfrak{A} - Q) \in \mathfrak{A}$.  Hence $P_0$ is an idempotent and it is clear that
\[
\begin{array}{rcl}
\left\|Z - I_\mathfrak{A}\right\| &=& \left\|(P_0Q + (I_\mathfrak{A} - P_0)(I_\mathfrak{A} - Q)) - (Q + (I_\mathfrak{A} - Q))\right\|  \\
 &\leq& \left\|(P_0 - I_\mathfrak{A})Q\right\| + \left\|((I_\mathfrak{A} - P_0) - I_\mathfrak{A})(I_\mathfrak{A} - Q)\right\| \\
 &=&  \left\|(P_0 - Q)Q\right\| + \left\|((I_\mathfrak{A} - P_0) - (I_\mathfrak{A} - Q))(I_\mathfrak{A} - Q)\right\| \\
 &\leq&  \left\|P_0 - Q\right\| + \left\|Q - P_0\right\| < 1.
 \end{array} 
\]
Hence $Z \in \mathfrak{A}^{-1}$.  Therefore, if $U$ is the partial isometry in the polar decomposition of $Z$, $Z = U|Z|$ and $U$ is a unitary element of $\mathfrak{A}$.
\par
We claim that $UQU^* = P_0$.  To see this, we notice that $U = Z|Z|^{-1}$,  $ZQ = P_0Q = P_0Z$, and
\[
Z^*Z = QP_0Q + (I_\mathfrak{A} - Q)(I_\mathfrak{A} - P_0)(I_\mathfrak{A} - Q).
\]
Thus $QZ^*Z = QP_0Q = Z^*ZQ$ so $Q$ commutes with $Z^*Z$.  Hence $Q$ commutes with $C^*(Z^*Z)$ and thus $Q$ commutes with $|Z|^{-1}$.  Thus
\[
\begin{array}{rcl}
UQU^* &=&  Z|Z|^{-1}Q|Z|^{-1}Z^* \\
 &=& ZQ|Z|^{-2}Z^* \\
 &=& P_0Z|Z|^{-2}Z^* = P_0
 \end{array} 
\]
as claimed.
\par
Therefore $Q = (U^*V)P(U^*V)^{-1}$ where $U^*V \in \mathfrak{A}^{-1}$.  It is standard to verify that if $W$ is the partial isometry in the polar decomposition of $U^*V$ then $W$ is a unitary such that $Q = WPW^*$ (see {\cite[Proposition 2.2.5]{RLL}}).  Therefore $P \sim_u Q$ and thus $P$ and $Q$ are Murray-von Neumann equivalent.
\end{proof}
\begin{lem}
\label{mvneopsaaueop}
Let $\mathfrak{A}$ be a unital, simple, purely infinite C$^*$-algebra and let $P$ and $Q$ be projections in $\mathfrak{A}$.  Then $P \sim_u Q$ if and only if $P \sim_{au} Q$ if and only if $Q \in \overline{\mathcal{S}(P)}$ only if $P$ and $Q$ are Murray-von Neumann equivalent.  If $P \neq I_\mathfrak{A}$ and $Q \neq I_\mathfrak{A}$, then $P \sim_u Q$ whenever $P$ and $Q$ are Murray-von Neumann equivalent.
\end{lem}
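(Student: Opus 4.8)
The plan is to establish the cycle of implications
$P \sim_u Q \Rightarrow P \sim_{au} Q \Rightarrow Q \in \overline{\mathcal{S}(P)} \Rightarrow P \sim_u Q$,
extracting Murray--von Neumann equivalence as a byproduct of the last implication, and to prove the final ``whenever'' clause separately. The two forward implications are immediate from the definitions: if $Q = UPU^*$ for a unitary $U$ then $Q \in \mathcal{U}(P) \subseteq \overline{\mathcal{U}(P)}$, and $\overline{\mathcal{U}(P)} \subseteq \overline{\mathcal{S}(P)}$ since every unitary is invertible.

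Next I would prove the second assertion of the lemma, that Murray--von Neumann equivalent projections $P, Q$ with $P \neq I_\mathfrak{A} \neq Q$ are unitarily equivalent. Since $P$ and $Q$ are Murray--von Neumann equivalent, $[P]_0 = [Q]_0$ in $K_0(\mathfrak{A})$, whence $[I_\mathfrak{A} - P]_0 = [I_\mathfrak{A}]_0 - [P]_0 = [I_\mathfrak{A}]_0 - [Q]_0 = [I_\mathfrak{A} - Q]_0$. As $P \neq I_\mathfrak{A} \neq Q$, the projections $I_\mathfrak{A} - P$ and $I_\mathfrak{A} - Q$ are non-zero, so {\cite[Theorem 1.4]{Cu}} shows that $I_\mathfrak{A} - P$ and $I_\mathfrak{A} - Q$ are Murray--von Neumann equivalent. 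Choosing partial isometries $V, W \in \mathfrak{A}$ with $V^*V = P$, $VV^* = Q$, $W^*W = I_\mathfrak{A} - P$, and $WW^* = I_\mathfrak{A} - Q$, a routine computation exploiting the orthogonality of the initial projections $P, I_\mathfrak{A} - P$ and of the final projections $Q, I_\mathfrak{A} - Q$ shows that $U := V + W$ is a unitary with $UPU^* = Q$; hence $P \sim_u Q$.

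It remains to close the cycle by proving that $Q \in \overline{\mathcal{S}(P)}$ implies $P \sim_u Q$, recovering Murray--von Neumann equivalence along the way. If $Q \in \overline{\mathcal{S}(P)}$, choose $V \in \mathfrak{A}^{-1}$ with $\|Q - VPV^{-1}\| < \frac{1}{2}$; then Lemma \ref{simorbitofprojections} yields that $P$ and $Q$ are Murray--von Neumann equivalent, which is the ``only if'' clause of the statement. To upgrade this to unitary equivalence I would split into cases. If $P \neq I_\mathfrak{A} \neq Q$, the previous paragraph applies directly. Otherwise one of the projections equals $I_\mathfrak{A}$, and here I would invoke the spectral inclusion $\sigma(P) \subseteq \sigma(Q)$ valid whenever $Q \in \overline{\mathcal{S}(P)}$ (recorded earlier in the section), together with $\sigma(I_\mathfrak{A}) = \{1\}$ and the fact that a projection whose spectrum is $\{1\}$ equals $I_\mathfrak{A}$, to force $P = Q = I_\mathfrak{A}$, so that $P \sim_u Q$ trivially.

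The main obstacle is the upgrade from Murray--von Neumann to unitary equivalence in the second assertion: the entire content lies in producing the complementary equivalence $I_\mathfrak{A} - P \sim I_\mathfrak{A} - Q$, which is precisely where the hypothesis $P \neq I_\mathfrak{A} \neq Q$ and the $K_0$-classification of non-zero projections in a unital, simple, purely infinite C$^*$-algebra ({\cite[Theorem 1.4]{Cu}}) are used; the assembly of $V$ and $W$ into a unitary is then purely formal. The case analysis needed to close the biconditional when a projection equals $I_\mathfrak{A}$ is a minor but genuine subtlety, since without the restriction $P, Q \neq I_\mathfrak{A}$ the passage from Murray--von Neumann equivalence to unitary equivalence fails in a properly infinite algebra (where $I_\mathfrak{A}$ is equivalent to a proper subprojection).
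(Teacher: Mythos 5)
Your proof follows exactly the route the paper intends: the paper's own proof is the single sentence that the result ``follows from Lemma \ref{simorbitofprojections} and standard K-theory arguments,'' and your use of Lemma \ref{simorbitofprojections} for the implication $Q \in \overline{\mathcal{S}(P)} \Rightarrow P,Q$ Murray--von Neumann equivalent, together with the computation $[I_\mathfrak{A}-P]_0 = [I_\mathfrak{A}-Q]_0$, the appeal to {\cite[Theorem 1.4]{Cu}} (which requires the non-vanishing of $I_\mathfrak{A}-P$ and $I_\mathfrak{A}-Q$, i.e.\ exactly the hypothesis $P \neq I_\mathfrak{A} \neq Q$), and the assembly of the unitary $U = V+W$, is precisely the standard argument being alluded to.

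There is, however, one step that does not work as written: in closing the cycle when one of the projections equals $I_\mathfrak{A}$, the spectral inclusion alone is insufficient in the subcase $P = I_\mathfrak{A}$. There it reads $\{1\} = \sigma(P) \subseteq \sigma(Q)$, which is satisfied by every non-zero projection $Q$ and so does not force $Q = I_\mathfrak{A}$. (It does suffice in the subcase $Q = I_\mathfrak{A}$, since then $\sigma(P) \subseteq \{1\}$ forces $P = I_\mathfrak{A}$.) To repair the $P = I_\mathfrak{A}$ subcase, either invoke the other half of the fact recorded at the start of Section \ref{sec:CLOSEDUNITARYORBITS} --- that $\sigma(P)$ must intersect every connected component of $\sigma(Q)$, which rules out $0 \in \sigma(Q)$ --- or simply observe that $VI_\mathfrak{A}V^{-1} = I_\mathfrak{A}$ for every invertible $V$, so $\overline{\mathcal{S}(I_\mathfrak{A})} = \{I_\mathfrak{A}\}$ and $Q = I_\mathfrak{A}$ outright. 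With that one-line patch your argument is complete and coincides with the paper's.
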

\begin{proof}
The result follows from Lemma \ref{simorbitofprojections} and standard K-theory arguments.
\end{proof}
The above shows that if $\mathfrak{A}$ is a unital, simple, purely infinite C$^*$-algebra with $K_0(\mathfrak{A})$ being non-trivial, there exists two projections $P,Q \in \mathfrak{A}$ with $\sigma(P) = \sigma(Q) = \{0,1\}$ that are not approximately unitarily equivalent.  Thus knowledge of the spectrum is not enough to complete our classification. 
\par
To avoid the above technicality, we will describe an additional condition for two normal operators to be approximately unitarily equivalent in a unital C$^*$-algebra.  The construction of this conditions makes use of the analytical functional calculus.
\begin{lem}
\label{analytic}
Let $\mathfrak{A}$ be a unital C$^*$-algebra, let $A, B \in \mathfrak{A}$, and let $f : \mathbb{C} \to \mathbb{C}$ be a function that is analytic on an open neighbourhood $U$ of $\sigma(A)\cup \sigma(B)$.  If $A \in \overline{\mathcal{S}(B)}$ then $f(A) \in \overline{\mathcal{S}(f(B))}$.  Similarly if $A \sim_{au} B$ then $f(A) \sim_{au} f(B)$.
\end{lem}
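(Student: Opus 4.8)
The plan is to deduce both statements from two standard properties of the holomorphic functional calculus: its invariance under conjugation and its continuity. First I would fix, once and for all, a cycle $\Gamma$ contained in $U\setminus(\sigma(A)\cup\sigma(B))$ that winds once around every point of $\sigma(A)\cup\sigma(B)$ and zero times around every point of $\mathbb{C}\setminus U$; such a $\Gamma$ exists because $U$ is open and contains the compact set $\sigma(A)\cup\sigma(B)$. For any $T\in\mathfrak{A}$ whose spectrum is enclosed by $\Gamma$ we then have $f(T)=\frac{1}{2\pi i}\oint_\Gamma f(z)(zI_\mathfrak{A}-T)^{-1}\,dz$. Assuming $A\in\overline{\mathcal{S}(B)}$, choose $V_n\in\mathfrak{A}^{-1}$ with $B_n:=V_nBV_n^{-1}\to A$; the goal is to show $f(B_n)\to f(A)$ and that each $f(B_n)$ lies in $\mathcal{S}(f(B))$, which together give $f(A)\in\overline{\mathcal{S}(f(B))}$.

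The conjugation-invariance is the easy half: since $(zI_\mathfrak{A}-V_nBV_n^{-1})^{-1}=V_n(zI_\mathfrak{A}-B)^{-1}V_n^{-1}$, pulling $V_n$ and $V_n^{-1}$ through the contour integral yields $f(B_n)=V_nf(B)V_n^{-1}$, so indeed $f(B_n)\in\mathcal{S}(f(B))$. The crucial structural observation is that $\sigma(B_n)=\sigma(B)$ for every $n$, since conjugation preserves the spectrum; hence every $B_n$ (and $A$) has its spectrum enclosed by the single fixed contour $\Gamma$, and the integral representation above applies uniformly in $n$ with no need to move the contour.

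For the continuity half I would estimate $\|f(B_n)-f(A)\|$ directly from the two contour integrals over the common $\Gamma$, using the resolvent identity $(zI_\mathfrak{A}-B_n)^{-1}-(zI_\mathfrak{A}-A)^{-1}=(zI_\mathfrak{A}-B_n)^{-1}(B_n-A)(zI_\mathfrak{A}-A)^{-1}$. Because $z\mapsto(zI_\mathfrak{A}-A)^{-1}$ is continuous hence bounded on the compact set $\Gamma$, and because $B_n\to A$ forces $\|(zI_\mathfrak{A}-B_n)^{-1}\|\le 2\|(zI_\mathfrak{A}-A)^{-1}\|$ uniformly on $\Gamma$ for all large $n$, one gets $\|f(B_n)-f(A)\|\le C\,\|B_n-A\|\to 0$ for a constant $C$ depending only on $\Gamma$, $f$, and $A$. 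This yields $f(B_n)\to f(A)$ and completes the similarity case. The approximate-unitary-equivalence statement is then obtained by the identical argument with each $V_n$ taken to be a unitary from $\mathcal{U}(\mathfrak{A})$, so that $f(B_n)=V_nf(B)V_n^*\in\mathcal{U}(f(B))$ and $f(B_n)\to f(A)$. The main obstacle to watch is precisely the uniform control of the resolvents on $\Gamma$; the point that makes it harmless is that the $V_n$ themselves may have unbounded norm, yet only the fixed-spectrum operators $B_n$ enter the estimate, so the single contour $\Gamma$ suffices throughout and no uniform bound on $\|V_n\|$ is ever required.
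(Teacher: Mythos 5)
Your proposal is correct and follows essentially the same route as the paper's proof: a fixed contour enclosing $\sigma(A)\cup\sigma(B)$, the identity $V_n f(B)V_n^{-1} = f(V_nBV_n^{-1})$ obtained by pulling the conjugation through the integral, the resolvent identity to express the difference of the two integrands, and a Neumann-series bound giving uniform control of $\left\|(zI_{\mathfrak{A}} - V_nBV_n^{-1})^{-1}\right\|$ on the contour for large $n$. The only difference is presentational (you estimate $\|f(B_n)-f(A)\|$ after identifying $f(B_n)$ with $V_nf(B)V_n^{-1}$, while the paper estimates $\|f(A)-V_nf(B)V_n^{-1}\|$ directly), so there is nothing further to add.
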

\begin{proof}
Let $(V_n)_{n\geq 1}$ be a sequence of invertible elements in $\mathfrak{A}$ such that 
\[
\lim_{n\to\infty} \left\|A - V_nBV_n^{-1}\right\| = 0.
\]
Let $\gamma$ be any compact, rectifiable curve inside $U$ such that $(\sigma(A)\cup \sigma(B)) \cap \gamma = \emptyset$, $Ind_\gamma(z) \in \{0,1\}$ for all $z \in \mathbb{C} \setminus \gamma$, $Ind_\gamma(z) = 1$ for all $z \in \sigma(A)\cup \sigma(B)$, and $\{z \in \mathbb{C} \, \mid \, Ind_\gamma(z) \neq 0\} \subseteq U$.  Then
\[
\begin{array}{rcl}
 &&  f(A) - V_nf(B)V_n^{-1} \\
 &=&  \frac{1}{2\pi i} \int_\gamma f(z)\left( (z I_\mathfrak{A} - A)^{-1} - V_n(z I_\mathfrak{A} - B)^{-1}V_n^{-1} \right) dz \\
 &=& \frac{1}{2\pi i} \int_\gamma f(z)\left( (z I_\mathfrak{A} - A)^{-1} - (z I_\mathfrak{A} - V_nBV_n^{-1})^{-1} \right) dz \\
 &=& \frac{1}{2\pi i} \int_\gamma f(z)(z I_\mathfrak{A} - A)^{-1}(A - V_nBV_n^{-1})(z I_\mathfrak{A} - V_nBV_n^{-1})^{-1} dz.
 \end{array} 
\]
Hence $\left\|f(A) - V_nf(B)V_n^{-1}\right\|$ is at most
\[
\frac{length(\gamma) \left\|A - V_nBV_n^{-1}\right\|}{2\pi}   \sup_{z \in \gamma} |f(z)|\left\|(z I_\mathfrak{A} - A)^{-1}\right\| \left\|(z I_\mathfrak{A} - V_nBV_n^{-1})^{-1}\right\|.
\]
Provided $\left\|A - V_nBV_n^{-1}\right\| \left\|(z I_\mathfrak{A} - A)^{-1}\right\| < 1$ for all $z \in \gamma$, the second resolvent equation can be used to show that
\[
\left\|(z I_\mathfrak{A} - V_nBV_n^{-1})^{-1}\right\| \leq \frac{\left\|(z I_\mathfrak{A} - A)^{-1}\right\|}{1 - \left\|A - V_nBV_n^{-1}\right\| \left\|(z I_\mathfrak{A} - A)^{-1}\right\|}
\]
for all $z \in \gamma$.  Since $\lim_{n\to\infty} \left\|A - V_nBV_n^{-1}\right\| = 0$, $\gamma$ is compact, and the resolvent function of an operator is continuous on the resolvent, $\left\|f(A) - V_nf(B)V_n^{-1}\right\|$ is at most
\[
\frac{length(\gamma) \left\|A - V_nBV_n^{-1}\right\|}{2\pi}   \sup_{z \in \gamma} |f(z)|\frac{\left\|(z I_\mathfrak{A} - A)^{-1}\right\|^2}{1 - \left\|A - V_nBV_n^{-1}\right\| \left\|(z I_\mathfrak{A} - A)^{-1}\right\|}
\]
for sufficiently large $n$.  Since the resolvent function is a continuous function on the resolvent of an operator and $\gamma$ is compact, the above supremum is finite and tends to
\[
\sup_{z \in \gamma} |f(z)|\left\|(z I_\mathfrak{A} - A)^{-1}\right\|^2
\]
as $n \to \infty$.  Thus, as
\[
\lim_{n\to\infty} \left\|A - V_nBV_n^{-1}\right\| = 0
\]
and $length(\gamma)$ is finite, $f(A) \in \overline{\mathcal{S}(f(B))}$.
\par
The proof that $A \sim_{au} B$ implies $f(A) \sim_{au} f(B)$ follows directly by replacing the invertible elements $V_n$ with unitary operators.
\end{proof}
If $\mathfrak{A}$ in Lemma \ref{analytic} were a unital, simple, purely infinite C$^*$-algebra, if $A$ and $B$ were normal operators, and if $f$ took values in $\{0,1\}$ with $f(A)$ and $f(B)$ being non-trivial, then Lemma \ref{mvneopsaaueop} would imply that the projections $f(A)$ and $f(B)$ are Murray-von Neumann equivalent in $\mathfrak{A}$.  Thus, to simplify notation, we make the following definition.
\begin{defn}
Let $\mathfrak{A}$ be a unital C$^*$-algebra and let $N_1, N_2 \in \mathfrak{A}$ be normal operators.  We say that $N_1$ and $N_2$ have equivalent common spectral projections if for every function $f : \mathbb{C} \to \mathbb{C}$ that is analytic on an open neighbourhood $U$ of $\sigma(N_1) \cup \sigma(N_2)$ with $f(U) \subseteq \{0,1\}$, the projections $f(N_1)$ and $f(N_2)$ are Murray-von Neumann equivalent.
\end{defn}
If $\mathfrak{A}$ is a unital, simple, purely infinite C$^*$-algebra and $\sigma(N_1) = \sigma(N_2)$, it is elementary to show that using {\cite[Theorem 1.4]{Cu}} that $N_1$ and $N_2$ have equivalent spectral projections if and only if they induce the same group homomorphisms from $K_0(\sigma(N_1))$ to $K_0(\mathfrak{A})$ via the Continuous Functional Calculus of Normal Operators.
\par
Finally, with the above and the arguments used in Lemma \ref{backandforth}, we provide an operator theoretic proof of {\cite[Theorem 1.7]{Dad}} for planar compact sets in the case that $K_1(\mathfrak{A})$ is trivial.
\begin{thm}
\label{ausiuspi}
Let $\mathfrak{A}$ be a unital, simple, purely infinite C$^*$-algebra and let $N_1, N_2 \in \mathfrak{A}$ be normal operators.  Suppose
\begin{enumerate}
	\item $\sigma(N_1) = \sigma(N_2)$,
	\item $\Gamma(N_1)$ and $\Gamma(N_2)$ are trivial, and
	\item $N_1$ and $N_2$ have equivalent common spectral projections.
\end{enumerate}
Then $N_1 \sim_{au} N_2$.
\end{thm}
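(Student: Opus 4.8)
The plan is to mimic the tree argument from the second half of Lemma \ref{backandforth}, but to run it separately on each connected piece of the spectrum, using hypothesis (3) to supply the $K$-theoretic bookkeeping that was automatic when $\sigma(N_1)$ was connected. Fix $\epsilon > 0$ and, exactly as in Lemma \ref{backandforth}, tile $\mathbb{C}$ by the boxes $B_{n,m}$ of side-length $\epsilon$, call a box \emph{relevant} if it meets $\sigma(N_1) = \sigma(N_2)$, and call two relevant boxes adjacent when their union is connected. Because $\sigma(N_1)$ need no longer be connected, the (finitely many) relevant boxes split into adjacency-components, or \emph{clusters}, $c_1, \dots, c_L$; since non-adjacent boxes have disjoint closures, the closures of distinct clusters are disjoint, so the sets $K_\ell := \sigma(N_1) \cap \bigcup_{B \in c_\ell} B$ are pairwise separated and hence clopen in $\sigma(N_1)$, with $\sigma(N_1) = K_1 \sqcup \cdots \sqcup K_L$.

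For each $\ell$ I choose $f_\ell$ analytic on a neighbourhood $U$ of $\sigma(N_1) \cup \sigma(N_2)$, identically $1$ near $K_\ell$ and identically $0$ near the remaining pieces; this is legitimate precisely because $K_\ell$ is clopen, and it satisfies $f_\ell(U) \subseteq \{0,1\}$. Then $F^{(q)}_\ell := f_\ell(N_q)$ is the spectral projection of $N_q$ onto $K_\ell$, and hypothesis (3) yields $[F^{(1)}_\ell]_0 = [F^{(2)}_\ell]_0$. Using Theorem \ref{linfa} together with Lemma \ref{densespectra2}, the semicontinuity of the spectrum, and perturbation of eigenvalues, I approximate $N_q$ within $\epsilon$ by a normal $M_q$ with finite spectrum; I take the approximation fine enough that $\sigma(M_q)$ respects the cluster separation and $\|N_q - M_q\|$ is small compared with the inter-cluster distance. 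Writing $E^{(q)}_\ell$ for the spectral projection of $M_q$ onto the cluster-$\ell$ part of $\sigma(M_q)$, this smallness forces $\|E^{(q)}_\ell - F^{(q)}_\ell\| < 1$, whence $[E^{(q)}_\ell]_0 = [F^{(q)}_\ell]_0$ and therefore $[E^{(1)}_\ell]_0 = [E^{(2)}_\ell]_0$ for every $\ell$.

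With this balance secured I would run the back-and-forth procedure inside each cluster. For $c_\ell$, form a spanning tree on the centres of its boxes (adjacent centres lie within $\sqrt{2}\,\epsilon$) and peel off leaves one at a time, at each stage embedding the leaf's eigenprojection of $M_1$ into the matching eigenprojection of $M_2$ and pushing the surplus onto the neighbouring vertex, exactly as in Lemma \ref{backandforth}. The running tally of Murray-von Neumann classes telescopes, and because $[E^{(1)}_\ell]_0 = [E^{(2)}_\ell]_0$ the projections remaining at the root are Murray-von Neumann equivalent by {\cite[Theorem 1.4]{Cu}}; the resulting partial isometries combine into a partial isometry $U_\ell$ with $U_\ell^* U_\ell = E^{(2)}_\ell$ and $U_\ell U_\ell^* = E^{(1)}_\ell$ that intertwines the two approximations to within $\sqrt{2}\,\epsilon$. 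As the clusters occupy orthogonal spectral subspaces and $\sum_\ell E^{(q)}_\ell = I_\mathfrak{A}$, the element $U := \sum_\ell U_\ell$ is a unitary of $\mathfrak{A}$ with $\|M_1 - U^* M_2 U\| \leq \sqrt{2}\,\epsilon$, so $\|N_1 - U^* N_2 U\|$ is bounded by a fixed multiple of $\epsilon$. Since $\epsilon$ was arbitrary, $N_1 \sim_{au} N_2$.

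The step I expect to be the real obstacle, and the one that forces hypothesis (3) into the statement, is establishing $[E^{(1)}_\ell]_0 = [E^{(2)}_\ell]_0$ cluster by cluster. In the connected case of Lemma \ref{backandforth} the eigenprojections summed to $I_\mathfrak{A}$ for both operators, so the final equivalence was free from $[I_\mathfrak{A}]_0 = [I_\mathfrak{A}]_0$; once the spectrum disconnects, each cluster carries its own $K_0$-class, and equality of spectra alone does not force these classes to agree --- this is precisely the obstruction recorded for projections in Lemma \ref{mvneopsaaueop}. Turning the chain $[E^{(q)}_\ell]_0 = [F^{(q)}_\ell]_0$ into something rigorous also requires coordinating the coarseness of the clustering grid against the fineness of the finite-spectrum approximation, so that the separating gaps dominate $\|N_q - M_q\|$ and the close-projection estimate $\|E^{(q)}_\ell - F^{(q)}_\ell\| < 1$ genuinely holds; I would arrange this by fixing the clusters first and only then choosing the approximation fine enough.
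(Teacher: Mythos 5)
Your proposal is correct and follows essentially the same route as the paper's proof: decompose the union of relevant $\epsilon$-boxes into its connected components, use hypothesis (3) (via characteristic functions of the clopen pieces and a close-projections argument) to ensure the finite-spectrum approximants $M_1, M_2$ have Murray--von Neumann equivalent spectral projections over each component, run the back-and-forth tree argument of Lemma \ref{backandforth} on each component separately, and sum the resulting partial isometries into a unitary. The only cosmetic differences are that the paper phrases the cluster-by-cluster matching directly in terms of Murray--von Neumann equivalence (via its Lemma \ref{simorbitofprojections} with the $\tfrac{1}{2}$ bound) rather than through $K_0$-classes, and it records the final estimate as $(4+\sqrt{2})\epsilon$; neither affects the argument.
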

\begin{proof}
Fix $\epsilon > 0$ and consider the $\epsilon$-grid used in Lemma \ref{backandforth}.  We label the box $B_{n,m}$ relevant if $B_{n,m} \cap \sigma(N_1) \neq \emptyset$.  Let $K$ be the union of the relevant boxes.  Since $\sigma(N_1)$ is compact, $K$ has finitely many connected components.  Let $L_1,\ldots, L_k$ be the connected components of $K$.  By construction $dist(L_i, L_j) \geq \epsilon$ for all $i \neq j$.  Therefore, if $f_i$ is the characteristic function of $L_i$, the third assumptions of the theorem implies $f_i(N_1)$ and $f_i(N_2)$ are Murray-von Neumann equivalent for each $i \in \{1,\ldots, k\}$.
\par
Note the second assumption of the theorem implies that there exists normal operators $M_1$ and $M_2$ in $\mathfrak{A}$ with finite spectrum such that $\left\|N_q - M_q\right\| < \epsilon$ for all $q \in \{1,2\}$.  By an application of Lemma \ref{densespectra2}, by the semicontinuity of the spectrum, and by small perturbations, we can assume that $M_q$ has spectrum contained in $K$ and $\sigma(M_q) \cap B_{n,m} \neq \emptyset$ for all relevant boxes $B_{n,m}$ and $q \in \{1,2\}$.  Furthermore, since each $f_i$ extends to a continuous function on an open neighbourhood of $K$, we can assume that $\left\|f_i(N_q) - f_i(M_q)\right\| < \frac{1}{2}$ for all $i \in \{1,\ldots, k\}$ and $q \in \{1,2\}$ by properties of the continuous functional calculus.  Therefore, for each $i \in \{1,\ldots, k\}$ and $q \in \{1,2\}$, $f_i(N_q)$ and $f_i(M_q)$ can be assumed to be Murray-von Neumann equivalent by Lemma \ref{simorbitofprojections}.  Since $f_i(N_1)$ and $f_i(N_2)$ are Murray-von Neumann equivalent for each $i \in \{1,\ldots, k\}$, $f_i(M_1)$ and $f_i(M_2)$ are Murray-von Neumann equivalent for each $i \in \{1,\ldots, k\}$.  By perturbing the spectrum of $M_1$ and $M_2$ inside each $L_i$, we can assume that $\sigma(M_q)$ is precisely the centres of the relevant boxes for all $q \in \{1,2\}$, $f_i(M_1)$ and $f_i(M_2)$ are Murray-von Neumann equivalent for each $i \in \{1,\ldots, k\}$, and $\left\|N_q - M_q\right\| < 2\epsilon$ for all $q \in \{1,2\}$.
\par
Next we apply the `back and forth' argument of Lemma \ref{backandforth} to the spectrum of $M_1$ and $M_2$ in each $L_i$ separately.  This process can be applied to each $L_i$ separately as in Lemma \ref{backandforth} due to the fact that $f_i(M_1)$ and $f_i(M_2)$ are Murray-von Neumann equivalent so the final step of the construction (that is, $R^{(1)}_n$ and $Q^{(2)}_n$ are Murray-von Neumann equivalent) can be completed.  Thus, for each $i \in \{1, \ldots, k\}$, the `back and forth' process produces a partial isometry $V_i \in \mathfrak{A}$ such that $V_i^*V_i = f_i(M_1)$, $V_iV_i^* = f_i(M_2)$, and $\left\|M_1 f_i(M_1) - V_i^* M_2 f_i(M_2)V_i\right\| \leq \sqrt{2}\epsilon$.  Therefore, if $U := \sum^k_{i=1} V_i$ then $U \in \mathfrak{A}$ is a unitary as
\[
\sum^k_{i=1} f_i(M_1) = I_\mathfrak{A} = \sum^k_{i=1} f_i(M_2)
\]
are sums of orthogonal projections.  Moreover, a trivial computation shows
\[
\left\|M_1 - U^*M_2U\right\| \leq \sqrt{2}\epsilon
\]
so
\[
\left\|N_1 - U^*N_2U \right\| \leq (4 + \sqrt{2})\epsilon
\]
completing the proof.
\end{proof}
\begin{cor}
\label{aueintrivialk1}
Let $\mathfrak{A}$ be a unital, simple, purely infinite C$^*$-algebra such that $K_1(\mathfrak{A})$ is trivial and let $N_1, N_2 \in \mathfrak{A}$ be normal operators.  Then $N_1 \sim_{au} N_2$ if and only if
\begin{enumerate}
	\item $\sigma(N_1) = \sigma(N_2)$ and
	\item $N_1$ and $N_2$ have equivalent common spectral projections.
\end{enumerate}
\end{cor}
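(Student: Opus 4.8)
The plan is to read this off from Theorem \ref{ausiuspi} once one observes that the hypothesis $K_1(\mathfrak{A}) = \{0\}$ renders condition (2) of that theorem superfluous. For any normal $N$ the index function $\Gamma(N)$ is a group homomorphism from $K_1(C(\sigma(N)))$ into $K_1(\mathfrak{A})$; if the target group is trivial, then $\Gamma(N)$ is necessarily the zero homomorphism. Hence $\Gamma(N_1)$ and $\Gamma(N_2)$ are automatically trivial. For the implication from (1) and (2) to $N_1 \sim_{au} N_2$, I would therefore simply assume the two listed conditions, observe that all three hypotheses of Theorem \ref{ausiuspi} now hold, and conclude $N_1 \sim_{au} N_2$.

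For the converse I would assume $N_1 \sim_{au} N_2$ and verify (1) and (2) separately. Equality of spectra is already recorded in the introductory discussion: approximate unitary equivalence forces $\sigma(N_1) = \sigma(N_2)$ by semicontinuity of the spectrum. To obtain (2), fix a function $f$ that is analytic on an open neighbourhood $U$ of $\sigma(N_1) \cup \sigma(N_2)$ with $f(U) \subseteq \{0,1\}$. Since $N_1 \sim_{au} N_2$, Lemma \ref{analytic} gives $f(N_1) \sim_{au} f(N_2)$, and because $f$ takes values in $\{0,1\}$ these are projections. Lemma \ref{mvneopsaaueop} then yields that $f(N_1)$ and $f(N_2)$ are Murray-von Neumann equivalent, which is precisely what the definition of having equivalent common spectral projections demands.

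There is essentially no obstacle here---the corollary is a packaging of the earlier results---but two small points deserve a remark. First, Lemma \ref{mvneopsaaueop} delivers Murray-von Neumann equivalence from approximate unitary equivalence of projections with no non-triviality hypothesis, so the degenerate cases $f(N_q) = 0$ and $f(N_q) = I_\mathfrak{A}$ cause no trouble; in fact $\sigma(N_1) = \sigma(N_2)$ already forces $f(N_1)$ and $f(N_2)$ to be simultaneously $0$ or simultaneously $I_\mathfrak{A}$. Second, the triviality of $\Gamma(N_1)$ and $\Gamma(N_2)$ used in the first paragraph is the content of the canonical isomorphism $K_1(\mathfrak{A}) \cong \mathfrak{A}^{-1}/\mathfrak{A}^{-1}_0$ from {\cite[Theorem 1.9]{Cu}}, so it can equivalently be phrased as $\lambda I_\mathfrak{A} - N_q \in \mathfrak{A}^{-1}_0$ for every $\lambda \notin \sigma(N_q)$.
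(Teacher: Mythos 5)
Your proof is correct and follows the same route as the paper: the forward implication via Lemma \ref{analytic} and Lemma \ref{mvneopsaaueop} (plus the standard fact that approximate unitary equivalence preserves spectra), and the reverse implication via Theorem \ref{ausiuspi} after observing that triviality of $K_1(\mathfrak{A})$ makes the index functions automatically trivial. The remarks on the degenerate cases are accurate but not needed beyond what the cited lemmas already cover.
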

\begin{proof}
One direction is follows from Theorem \ref{ausiuspi} and the fact that $K_1(\mathfrak{A})$ is trivial implies $\mathfrak{A}^{-1} = \mathfrak{A}_0^{-1}$ by {\cite[Theorem 1.9]{Cu}}.  The other direction follows from Lemma \ref{analytic} and Lemma \ref{mvneopsaaueop}.
\end{proof}

\section{Distance Between Unitary Orbits of Normal Operators}
\label{sec:DISTANCEUNITARYORBITS}

In this section we will make use of the techniques of Section \ref{sec:CLOSEDUNITARYORBITS} to provide some bounds for the distance between the unitary orbits of two normal operator in unital, simple, purely infinite C$^*$-algebras.  In particular, Corollary \ref{hausdorffdistanceforsimilarity} can be used to deduce Theorem \ref{ausiuspi}.  Furthermore, these results along with {\cite[Theorem 1.7]{Dad}} will provide information about the distance between unitary orbits of normal operators with non-trivial index function.
\par
We begin with the following definition that is common in the discussion of the distance between unitary orbits.
\begin{defn}
Let $X$ and $Y$ be subsets of $\mathbb{C}$.  The Hausdorff distance between $X$ and $Y$, denoted $d_H(X, Y)$, is
\[
d_H(X, Y) := \max\left\{ \sup_{x \in X} dist(x, Y), \sup_{y \in Y} dist(y, X)\right\}.
\]
\end{defn}
In \cite{Da1}, Davidson developed the following notation for the Calkin algebra that will be of particular use to us.
\begin{defn}
\label{rho}
Let $\mathfrak{A}$ be a unital, simple, purely infinite C$^*$-algebra.  For normal operators $N_1, N_2 \in \mathfrak{A}$ let $\rho(N_1, N_2)$ denote the maximum of $d_H(\sigma(N_1), \sigma(N_2))$ and
\[
\sup\{ dist(\lambda, \sigma(N_1)) + dist(\lambda, \sigma(N_2)) \, \mid \, \lambda \notin \sigma(N_1) \cup \sigma(N_2), \Gamma(N_1)(\lambda) \neq \Gamma(N_2)(\lambda)\}.
\]
\end{defn}
We begin by noting the following adaptation of {\cite[Proposition 1.2]{Da1}}.
\begin{prop}
\label{lowerbounddistantofunitaryorbits}
Let $\mathfrak{A}$ be a unital C$^*$-algebra and let $N_1, N_2 \in \mathfrak{A}$ be normal operators.  Then
\[
dist(\mathcal{U}(N_1), \mathcal{U}(N_2)) \geq d_H(\sigma(N_1), \sigma(N_2)).
\]
If $\mathfrak{A}$ is a unital, simple, purely infinite C$^*$-algebra then
\[
dist(\mathcal{U}(N_1), \mathcal{U}(N_2)) \geq \rho(N_1, N_2).
\]
\end{prop}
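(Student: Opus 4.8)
The plan is to reduce everything to one elementary fact about normal operators: if $N$ is normal and $\mu \notin \sigma(N)$, then $\|(\mu I_\mathfrak{A} - N)^{-1}\| = dist(\mu, \sigma(N))^{-1}$, since the continuous functional calculus is isometric and $(\mu I_\mathfrak{A} - N)^{-1} = f(N)$ for $f(z) = (\mu - z)^{-1}$. Everything else is assembled from this via Neumann-series invertibility arguments and the identification $K_1(\mathfrak{A}) \cong \mathfrak{A}^{-1}/\mathfrak{A}^{-1}_0$ from \cite[Theorem 1.9]{Cu}.

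For the first inequality I would first note that unitary conjugation preserves both norm and spectrum, so $dist(\mathcal{U}(N_1), \mathcal{U}(N_2)) = \inf\{\|N_1 - UN_2U^*\| : U \in \mathcal{U}(\mathfrak{A})\}$, and it suffices to show $\|A - B\| \geq d_H(\sigma(A), \sigma(B))$ for arbitrary normal $A, B$. Fix $a \in \sigma(A)$; the inequality is trivial if $a \in \sigma(B)$, so assume $a \notin \sigma(B)$ and suppose for contradiction $\|A - B\| < dist(a, \sigma(B))$. Writing $A - a I_\mathfrak{A} = (B - a I_\mathfrak{A})(I_\mathfrak{A} + (B - a I_\mathfrak{A})^{-1}(A - B))$ and using $\|(B - a I_\mathfrak{A})^{-1}\| = dist(a,\sigma(B))^{-1}$, the second factor is invertible by a Neumann series, forcing $A - a I_\mathfrak{A} \in \mathfrak{A}^{-1}$ and contradicting $a \in \sigma(A)$. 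Hence $dist(a, \sigma(B)) \leq \|A - B\|$; taking the supremum over $a$ and symmetrizing gives $d_H(\sigma(A),\sigma(B)) \leq \|A - B\|$, which yields the claimed bound.

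For the purely infinite case the remaining task is to prove $dist(\mathcal{U}(N_1),\mathcal{U}(N_2)) \geq dist(\lambda,\sigma(N_1)) + dist(\lambda,\sigma(N_2))$ for every $\lambda \notin \sigma(N_1)\cup\sigma(N_2)$ with $\Gamma(N_1)(\lambda) \neq \Gamma(N_2)(\lambda)$; taking the supremum over such $\lambda$ and combining with the first part then produces $\rho(N_1,N_2)$. Writing $r_q = dist(\lambda, \sigma(N_q))$, I would fix a unitary $U$, set $M_2 = UN_2U^*$ (so $\sigma(M_2) = \sigma(N_2)$, and by conjugation-invariance of the $K_1$-class exactly as computed in Lemma \ref{ccoits} one has $\Gamma(M_2)(\lambda) = \Gamma(N_2)(\lambda)$), and put $\delta = \|N_1 - M_2\|$. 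Arguing by contradiction, assume $\delta < r_1 + r_2$ and consider the straight-line path $H_t = \lambda I_\mathfrak{A} - ((1-t)N_1 + tM_2)$ for $t \in [0,1]$, joining $\lambda I_\mathfrak{A} - N_1$ to $\lambda I_\mathfrak{A} - M_2$.

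The heart of the argument, and the step I expect to require the most care, is showing $H_t \in \mathfrak{A}^{-1}$ for \emph{every} $t \in [0,1]$. Factoring $H_t = (\lambda I_\mathfrak{A} - N_1)(I_\mathfrak{A} - t(\lambda I_\mathfrak{A} - N_1)^{-1}(M_2 - N_1))$ and using $\|(\lambda I_\mathfrak{A} - N_1)^{-1}\| = r_1^{-1}$ shows $H_t$ is invertible for $t \in [0, r_1/\delta)$; factoring instead through $\lambda I_\mathfrak{A} - M_2$ shows $H_t$ is invertible for $t \in (1 - r_2/\delta, 1]$. The inequality $\delta < r_1 + r_2$ is precisely what guarantees $1 - r_2/\delta < r_1/\delta$, so these two intervals overlap and together cover all of $[0,1]$, making $H_t$ a continuous path in $\mathfrak{A}^{-1}$. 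Since the $K_1$-class is the connected-component class in $\mathfrak{A}^{-1}/\mathfrak{A}^{-1}_0$, the endpoints lie in the same component, whence $\Gamma(N_1)(\lambda) = \Gamma(M_2)(\lambda) = \Gamma(N_2)(\lambda)$, contradicting the hypothesis. Thus $\delta \geq r_1 + r_2$ for all $U$, and taking the infimum over $U$ completes the proof. The only genuinely delicate point is confirming that the two invertibility intervals cover $[0,1]$ under the strict inequality, so I would state that covering step explicitly rather than gloss it.
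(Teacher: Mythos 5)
Your proposal is correct and is essentially the argument the paper delegates to its citations: the paper proves the first inequality by invoking \cite[Proposition 2.1]{Da2} (the resolvent-norm estimate $\left\|(\mu I_{\mathfrak{A}}-N)^{-1}\right\| = dist(\mu,\sigma(N))^{-1}$ for normal $N$) and the second by invoking the proof of \cite[Proposition 1.2]{Da1} with $\Gamma$ in place of the Fredholm index, which is precisely your two-sided Neumann-series homotopy $H_t$ combined with the identification of $\Gamma(N)(\lambda)$ with the connected component of $\lambda I_{\mathfrak{A}}-N$ in $\mathfrak{A}^{-1}$ from \cite[Theorem 1.9]{Cu}. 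You have simply written out in full the details the paper outsources, including the one genuinely delicate covering step, and the only cosmetic omission is the trivial case $\delta = 0$, where the path is constant.
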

\begin{proof}
The proof of the first statement follows from {\cite[Proposition 2.1]{Da2}} and the proof of the second statement follows from the proof of {\cite[Proposition 1.2]{Da1}} where the index function $\Gamma$ is substituted for the traditional index function.
\end{proof}
For our discussions of the distance between unitary orbits of normal operators in unital, simple, purely infinite C$^*$-algebras, we shall begin with the case our normal operators have trivial index function so that $\rho(N_1, N_2) = d_H(\sigma(N_1), \sigma(N_2))$ and we may apply the techniques from Section \ref{sec:CLOSEDUNITARYORBITS}.
\par
We first turn our attention to the Cuntz algebra $\mathcal{O}_2$.  As $K_0(\mathcal{O}_2)$ and $K_1(\mathcal{O}_2)$ are trivial, we are led to the following generalization of {\cite[Theorem 1.5]{HN}} whose proof is identical to the one given below.
\begin{prop}[see {\cite[Theorem 1.5]{HN}}]
\label{hausdorffdist}
Let $\mathfrak{A}$ be a unital C$^*$-algebra such that $\mathfrak{A}$ has property weak (FN), any two non-zero projections in $\mathfrak{A}$ are Murray-von Neumann equivalent, and every non-zero projection in $\mathfrak{A}$ is properly infinite.  Let $N_1, N_2 \in \mathfrak{A}$ be normal operators such that $\Gamma(N_1)$ and $\Gamma(N_2)$ are trivial.  Then
\[
dist(\mathcal{U}(N_1), \mathcal{U}(N_2)) = d_H(\sigma(N_1), \sigma(N_2)).
\]
\end{prop}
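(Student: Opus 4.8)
The plan is to prove the two inequalities separately. The lower bound $dist(\mathcal{U}(N_1),\mathcal{U}(N_2)) \geq d_H(\sigma(N_1),\sigma(N_2))$ is immediate from the first assertion of Proposition \ref{lowerbounddistantofunitaryorbits}, which holds in any unital C$^*$-algebra. The entire content therefore lies in the reverse inequality $dist(\mathcal{U}(N_1),\mathcal{U}(N_2)) \leq d_H(\sigma(N_1),\sigma(N_2))$, and the strategy is to run a quantitative version of the back-and-forth argument of Lemma \ref{backandforth}, using $|\lambda - \mu| \leq d_H$ as the adjacency relation in place of equality of eigenvalues.

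First I would reduce to finite spectrum. Write $\delta := d_H(\sigma(N_1),\sigma(N_2))$ and fix $\epsilon > 0$. Since $\Gamma(N_q)$ is trivial we have $\lambda I_\mathfrak{A} - N_q \in \mathfrak{A}_0^{-1}$ for every $\lambda \notin \sigma(N_q)$, so property weak (FN) produces normal operators $M_q$ with finite spectrum and $\|N_q - M_q\| < \epsilon$ for $q \in \{1,2\}$. Because the spectrum of a normal operator is norm-Hausdorff-continuous, $d_H(\sigma(M_q),\sigma(N_q)) < \epsilon$, hence $d_H(\sigma(M_1),\sigma(M_2)) < \delta + 2\epsilon$. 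Granting the finite-spectrum estimate $dist(\mathcal{U}(M_1),\mathcal{U}(M_2)) \leq d_H(\sigma(M_1),\sigma(M_2))$ proved below, standard perturbation estimates give $dist(\mathcal{U}(N_1),\mathcal{U}(N_2)) \leq \|N_1 - M_1\| + dist(\mathcal{U}(M_1),\mathcal{U}(M_2)) + \|N_2 - M_2\| < \delta + 4\epsilon$, and letting $\epsilon \to 0$ finishes the argument.

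Second I would treat the finite-spectrum case, the heart of the matter. Write $M_1 = \sum_i \lambda_i P_i$ and $M_2 = \sum_j \mu_j Q_j$ for orthogonal spectral decompositions and set $\delta' := d_H(\sigma(M_1),\sigma(M_2))$. Form the bipartite graph whose vertices are the eigenvalues of $M_1$ and of $M_2$ and whose edges join $\lambda_i$ to $\mu_j$ exactly when $|\lambda_i - \mu_j| \leq \delta'$; the definition of Hausdorff distance forces every vertex to have degree at least one. The goal is to split each $P_i = \sum_j P_{ij}$ and each $Q_j = \sum_i Q_{ij}$ into orthogonal pieces supported only on edges (so $P_{ij} = Q_{ij} = 0$ unless $|\lambda_i - \mu_j| \leq \delta'$), with $P_{ij}$ Murray-von Neumann equivalent to $Q_{ij}$ via a partial isometry $V_{ij}$. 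Then $U := \sum_{i,j} V_{ij}$ is a unitary and a direct computation gives $U M_2 U^* = \sum_{i,j} \mu_j P_{ij}$, whence $M_1 - U M_2 U^* = \sum_{i,j}(\lambda_i - \mu_j) P_{ij}$ and $\|M_1 - U M_2 U^*\| = \max\{|\lambda_i - \mu_j| : P_{ij} \neq 0\} \leq \delta'$, as required. To build the splitting I would work in each connected component $C$ of the bipartite graph separately along a spanning tree, repeatedly processing leaves exactly as in Lemma \ref{backandforth}: proper infiniteness of the projections guarantees that any nonzero projection is Murray-von Neumann equivalent to a proper subprojection of any other nonzero projection, so at each step one can embed the piece attached to a leaf into its neighbour and carry a nonzero remainder forward to the next vertex.

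The main obstacle, and the step I expect to require the most care, is verifying that this tree-processing closes up correctly at the last vertex of each component, i.e.\ that the two leftover projections are Murray-von Neumann equivalent so that the partial isometries assemble into an honest unitary. As in the final step of Lemma \ref{backandforth} this is a $K_0$ bookkeeping argument, but here it is handled cleanly by the hypothesis that any two nonzero projections of $\mathfrak{A}$ are Murray-von Neumann equivalent: within each component the total projections $\sum_{i \in C} P_i$ and $\sum_{j \in C} Q_j$ are both nonzero (every component meets both sides, since all degrees are positive), hence equivalent, and summing the resulting component partial isometries over all components — which are mutually orthogonal and exhaust $I_\mathfrak{A}$ on each side — yields the global unitary $U$. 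The remaining technical points, namely the norm-Hausdorff-continuity of the spectrum for normal operators used in the reduction and the observation that the degree condition is precisely the Hausdorff-distance hypothesis, are elementary.
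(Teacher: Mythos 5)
Your proposal is correct, and its overall skeleton (lower bound from Proposition \ref{lowerbounddistantofunitaryorbits}, reduction to finite spectrum via property weak (FN), then a matching of spectral projections using the equivalence of all non-zero projections together with proper infiniteness) is the same as the paper's. Where you diverge is in how the matching is organized. The paper does not build a threshold graph or traverse a spanning tree: it takes the set $X$ of pairs $(\lambda,\mu)\in\sigma(M_1)\times\sigma(M_2)$ that \emph{exactly realize} one of the two distances $dist(\lambda,\sigma(M_2))$ or $dist(\mu,\sigma(M_1))$, notes that the multiplicities satisfy $\sum_\lambda n_\lambda=\sum_\mu m_\mu=|X|$, splits each spectral projection of $M_q$ into that many non-zero orthogonal pieces (possible by proper infiniteness), and then pairs the pieces off bijectively along the elements of $X$; since \emph{any} two non-zero projections are Murray--von Neumann equivalent by hypothesis, every pairing is implemented by a partial isometry and there is no residual ``closing up'' step at all. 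Your spanning-tree/back-and-forth scheme, imported from Lemma \ref{backandforth} and Lemma \ref{hausdistconnspec}, also works here --- the leftover projections at the end of each component are non-zero and hence equivalent, exactly as you argue --- but it carries machinery designed for the harder setting where only $K_0$ bookkeeping controls equivalence of projections. What your version buys is uniformity with the later arguments for general purely infinite algebras; what the paper's version buys is brevity, since under the present hypotheses the bipartite matching problem is trivial once the piece counts agree. Your replacement of the paper's ``$\sigma(M_q)\subseteq\sigma(N_q)$ and $\sigma(M_q)$ is an $\epsilon$-net'' normalization by the inequality $d_H(\sigma(A),\sigma(B))\le\|A-B\|$ for normal elements is a legitimate simplification of the reduction step.
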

\begin{proof}
One inequality follows from Proposition \ref{lowerbounddistantofunitaryorbits}.  Let $\epsilon > 0$.  Since $\mathfrak{A}$ has weak (FN), the conditions on $N_1$ and $N_2$ imply that there exists two normal operators $M_1,M_2\in\mathfrak{A}$ with finite spectrum such that $\left\|N_q - M_q\right\| < \epsilon$ for all $q \in \{1,2\}$.  By Lemma \ref{densespectra2}, by the semicontinuity of the spectrum, and by applying small perturbations, we may assume that $\sigma(M_q) \subseteq \sigma(N_q)$ and $\sigma(M_q)$ is an $\epsilon$-net for $\sigma(N_q)$ for all $q \in \{1,2\}$.
\par
Let $X$ be the set of all ordered pairs $(\lambda, \mu) \in \sigma(M_1) \times \sigma(M_2)$ such that either
\[
|\lambda - \mu| = dist(\lambda, \sigma(M_2)) \mbox{ or }|\lambda - \mu| = dist(\mu, \sigma(M_1)).
\]
For each $\lambda\in\sigma(M_1)$ and $\mu\in\sigma(M_2)$, let $n_\lambda := |\{(\lambda, \zeta) \in X\}|$ and $m_\mu := |\{(\zeta, \mu) \in X\}|$.  Clearly $n_\lambda \geq 1$ for all $\lambda\in\sigma(M_1)$, $m_\mu \geq 1$ for all $\mu\in\sigma(M_2)$, and $\sum_{\lambda \in \sigma(M_1)} n_\lambda = \sum_{\mu \in \sigma(M_2)} m_\mu$.
\par
Since every projection in $\mathfrak{A}$ is properly infinite, we can write 
\[
M_1 = \sum_{\lambda\in \sigma(N_0)} \sum^{n_\lambda}_{k=1} \lambda P_{\lambda, k}\,\,\,\,\,\,\,\mbox{ and }\,\,\,\,\,\,\, M_2 = \sum_{\mu\in \sigma(M_0)} \sum^{m_\mu}_{k=1} \mu Q_{\mu, k}
\]
where $\left\{\{P_{\lambda, k}\right\}^{n_\lambda}_{k=1}\}_{\lambda \in \sigma(M_1)}$ and $\left\{\{Q_{\mu, k}\}^{m_\mu}_{k=1}\right\}_{\mu \in \sigma(M_2)}$ are sets of non-zero orthogonal projections in $\mathfrak{A}$ each of which sums to the identity.  Since all projections in $\mathfrak{A}$ are Murray-von Neumann equivalent, using $X$ we can pair off the projections in these finite sums to obtain a unitary $U \in \mathfrak{A}$ (that is a sum of partial isometries) such that 
\[
\left\|M_1 - UM_2U^*\right\| \leq \sup\{|\lambda - \mu|\, \mid \, (\lambda, \mu) \in X\} = d_H(\sigma(M_1), \sigma(M_2)).
\]
Hence
\[
dist(\mathcal{U}(N_1), \mathcal{U}(N_2)) \leq 2\epsilon + d_H(\sigma(M_1), \sigma(M_2)).
\]
Since $\sigma(M_1)$ is an $\epsilon$-net for $\sigma(N_1)$, and $\sigma(M_2)$ is an $\epsilon$-net for $\sigma(N_2)$, 
\[
d_H(\sigma(M_1), \sigma(M_2)) \leq d_H(\sigma(N_1), \sigma(N_2)) + \epsilon
\]
completing the proof.
\end{proof}
Unfortunately Proposition \ref{hausdorffdist} does not completely generalize to unital, simple, purely infinite C$^*$-algebras with non-trivial $K_0$-group.  The following uses the ideas of Section \ref{sec:CLOSEDUNITARYORBITS} to obtain a preliminary result.
\begin{lem}
\label{hausdistconnspec}
Let $\mathfrak{A}$ be a unital, simple, purely infinite C$^*$-algebra and let $N_1, N_2 \in \mathfrak{A}$ be normal operators such that $\Gamma(N_1)$ and $\Gamma(N_2)$ are trivial.  If $\sigma(N_1)$ is connected then
\[
dist(\mathcal{U}(N_1), \mathcal{U}(N_2)) = d_H(\sigma(N_1), \sigma(N_2)).
\]
\end{lem}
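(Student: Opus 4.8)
The lower bound $dist(\mathcal{U}(N_1), \mathcal{U}(N_2)) \geq d_H(\sigma(N_1), \sigma(N_2))$ is immediate from the first inequality of Proposition \ref{lowerbounddistantofunitaryorbits}, so the content lies entirely in the reverse inequality. The plan is to run the `back and forth' construction of Lemma \ref{backandforth}, but now \emph{transporting} the spectrum of $N_2$ onto that of $N_1$ along a connected bipartite graph all of whose edges have length at most $d_H(\sigma(N_1), \sigma(N_2))$ up to an error tending to $0$. Write $d := d_H(\sigma(N_1), \sigma(N_2))$ and fix $\epsilon > 0$.

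Since $\Gamma(N_1)$ and $\Gamma(N_2)$ are trivial, Theorem \ref{linfa} lets me approximate $N_q$ within $\epsilon$ by a normal operator $M_q$ with finite spectrum. Using Lemma \ref{densespectra2}, the semicontinuity of the spectrum, and small perturbations of eigenvalues, I would arrange that $\sigma(M_q)$ is an $\epsilon$-net of $\sigma(N_q)$, that $\sigma(M_1)$ consists of centres of the relevant boxes of an $\epsilon$-grid (so that, exactly as in Lemma \ref{backandforth}, connectedness of $\sigma(N_1)$ makes the graph on $\sigma(M_1)$ joining points in adjacent boxes connected), and that $d_H(\sigma(M_1), \sigma(M_2)) \leq d + O(\epsilon)$ with $\|N_q - M_q\| \leq 2\epsilon$ for $q \in \{1,2\}$.

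Next I would form the bipartite graph $H$ whose vertices are the eigenvalues of $M_1$ (``red'') and of $M_2$ (``blue''), joining a red $\lambda$ to a blue $\mu$ exactly when $|\lambda - \mu| \leq r$, where $r := d + O(\epsilon)$ is chosen slightly larger than $d_H(\sigma(M_1), \sigma(M_2))$. The key point, and the only place where connectedness of $\sigma(N_1)$ enters, is that $H$ is connected: each red $\lambda$ has a blue neighbour $\mu_\lambda$ with $|\lambda - \mu_\lambda| = dist(\lambda, \sigma(M_2)) \leq d_H(\sigma(M_1), \sigma(M_2))$, and for adjacent red $\lambda, \lambda'$ one has $|\lambda' - \mu_\lambda| \leq |\lambda' - \lambda| + |\lambda - \mu_\lambda| \leq r$, so $\lambda$ and $\lambda'$ are joined in $H$ through $\mu_\lambda$. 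Thus the connected adjacency graph on the red vertices lifts to a connected subgraph of $H$, and every blue vertex attaches to it since $dist(\mu, \sigma(M_1)) \leq r$.

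Finally I would choose a spanning tree $\mathcal{T}$ of $H$ and run the back-and-forth construction of Lemma \ref{backandforth} along $\mathcal{T}$: processing leaves, using pure infiniteness to realise each matched pair of subprojections as Murray-von Neumann equivalent (every nonzero projection is equivalent to a proper subprojection of any other), and pushing the leftover mass toward the root. Since every edge of $\mathcal{T}$ joins a red $\lambda$ and a blue $\mu$ with $|\lambda - \mu| \leq r$, every partial isometry produced matches a spectral projection of $M_2$ at $\mu$ with one of $M_1$ at $\lambda$ with $|\lambda - \mu| \leq r$. When two vertices remain, the two leftover projections have the same $K_0$-class by the telescoping computation of Lemma \ref{backandforth} (both full spectral sums equal $[I_\mathfrak{A}]_0$), hence are Murray-von Neumann equivalent by {\cite[Theorem 1.4]{Cu}}, and the final partial isometry completes a unitary $U$ with $\|M_1 - U^*M_2U\| \leq r$. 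Then $dist(\mathcal{U}(N_1), \mathcal{U}(N_2)) \leq \|N_1 - U^*N_2U\| \leq 4\epsilon + r = d + O(\epsilon)$, and letting $\epsilon \to 0$ gives the upper bound. The genuine obstacle is neutralising the $K_0$ obstruction of Lemma \ref{mvneopsaaueop}: the terminal matching closes up only because connectedness of $\sigma(N_1)$ renders $H$ connected, so all spectral mass can be routed along short edges to a single terminal pair whose classes are forced to agree. This is precisely the hypothesis that fails to hold in general, which is why Proposition \ref{hausdorffdist} does not extend to nontrivial $K_0$.
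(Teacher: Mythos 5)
Your proposal is correct and follows essentially the same route as the paper: discretize both operators onto an $\epsilon$-grid, build a bipartite graph on the two finite spectra whose edges have length at most $d_H(\sigma(N_1),\sigma(N_2))+O(\epsilon)$, use connectedness of $\sigma(N_1)$ to show the graph is connected, and then run the back-and-forth matching along the graph with the terminal pair of projections identified via the telescoping $K_0$ computation. The only cosmetic difference is that the paper hard-wires the edges to adjacent box centres into the graph's definition with the explicit threshold $2\sqrt{2}\epsilon + d_H(\sigma(N_1),\sigma(N_2))$, whereas you recover the same edges from the triangle inequality inside your $O(\epsilon)$ slack.
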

\begin{proof}
One inequality follows from Proposition \ref{lowerbounddistantofunitaryorbits}.  The proof of the other inequality is a more complicated `back and forth' argument.  Fix $\epsilon > 0$ and let $B_{n,m}$ be as in Lemma \ref{backandforth}.  For each $q \in \{1,2\}$, we will say that $B_{n,m}$ is $N_q$-relevant if $B_{n,m} \cap \sigma(N_q) \neq \emptyset$.  By {\cite[Theorem 4.4]{Li}} there exists normal operators $M_1, M_2 \in \mathfrak{A}$ with finite spectrum such that $\left\|N_q - M_q\right\| < \epsilon$ for all $q = \{1,2\}$.  By Lemma \ref{densespectra2}, by the semicontinuity of the spectrum, and by a small perturbation, we can assume that $\sigma(M_q)$ is precisely the centres of the $N_q$-relevant boxes and $\left\|N_q - M_q\right\| \leq 2\epsilon$.  For each $q \in \{1,2\}$ and $\lambda \in \sigma(M_q)$ let $P^{(q)}_\lambda$ be the non-zero spectral projection of $M_q$ corresponding to $\lambda$.  
\par
To begin our `back and forth' argument, we will construct a bipartite graph, $\mathcal{G}$, using $\sigma(M_1)$ and $\sigma(M_2)$ as vertices (where we have two vertices for $\lambda$ if $\lambda \in \sigma(M_1) \cap \sigma(M_2)$).  The process for constructing the edges in $\mathcal{G}$ is as follows: for each $i,j \in \{1,2\}$ with $i \neq j$ and each $\lambda \in \sigma(M_i)$, for every $\mu \in \sigma(M_j)$ such that
\[
|\lambda - \mu| \leq 2\sqrt{2}\epsilon + d_H(\sigma(N_1), \sigma(N_2))
\]
(note that at least one such $\mu$ exists) add edges to $\mathcal{G}$ from $\mu$ to $\lambda$ and the centre of any $N_i$-relevant box adjacent (including diagonally adjacent) to the $N_i$-relevant box $\lambda$ describes.
\par
Clearly $\mathcal{G}$ is a bipartite graph and, by construction, if $\lambda \in \sigma(M_1)$ and $\mu \in \sigma(M_2)$ are connected by an edge of $\mathcal{G}$ then $|\lambda - \mu| \leq 2\sqrt{2}\epsilon + d_H(\sigma(N_1), \sigma(N_2))$.  We claim that $\mathcal{G}$ is connected.  To see this, we note that since $\mathcal{G}$ is bipartite and every vertex is the endpoint of at least one edge, it suffices to show that for each pair $\lambda, \mu \in \sigma(M_1)$ there exists a path from $\lambda$ to $\mu$.  Fix a pair $\lambda, \mu \in \sigma(M_1)$.  Since $\sigma(N_1)$ is connected, the union of the $N_1$-relevant boxes is connected so there exists a finite sequence $\lambda = \lambda_0, \lambda_1, \ldots, \lambda_k = \mu$ where $\lambda_{\ell-1}$ and $\lambda_{\ell}$ are centres of adjacent $N_1$-relevant boxes for all $\ell \in \{1,\ldots, k\}$.  However $\lambda_{\ell-1}$ and $\lambda_\ell$ are connected in $\mathcal{G}$ (via an element of $\sigma(M_2)$) by construction.  Hence the claim follows.
\par
Now that $\mathcal{G}$ is constructed, we will progressively remove vertices and edges from $\mathcal{G}$ and modify the non-zero projections $\left\{\left\{P^{(q)}_\lambda\right\}_{\lambda \in \sigma(M_j)}\right\}_{q\in \{1,2\}}$ in a specific manner to construct partial isometries in $\mathfrak{A}$ that will enable us to create a unitary $U \in \mathfrak{A}$ such that
\[
\left\|M_1 - U^*M_2U\right\| \leq 2\sqrt{2}\epsilon + d_H(\sigma(N_1), \sigma(N_2)).
\]
Since $\mathcal{G}$ is a connected graph, there exists a $j \in \{1,2\}$ and a vertex $\lambda \in \sigma(M_j)$ in $\mathcal{G}$ whose removal (along with all edges with $\lambda$ as an endpoint) does not disconnect $\mathcal{G}$.  Choose any vertex $\mu$ in $\mathcal{G}$ connected to $\lambda$ by an edge.  By the construction of $\mathcal{G}$ $|\lambda - \mu| \leq 2\sqrt{2}\epsilon + d_H(\sigma(N_1), \sigma(N_2))$ and $\mu \in \sigma(M_i)$ where $i \in \{1,2\}\setminus \{j\}$.  Since $\mathfrak{A}$ is a unital, simple, purely infinite C$^*$-algebra and $P^{(i)}_\mu$ is non-zero, there exists non-zero projections $Q^{(i)}_\mu$ and $R^{(i)}_\mu$ in $\mathfrak{A}$ such that $P^{(j)}_\lambda$ and $Q^{(i)}_\mu$ are Murray-von Neumann equivalent and $P^{(i)}_\mu = Q^{(i)}_\mu + R^{(i)}_\mu$.   To complete our recursive step, remove $\lambda$ from $\mathcal{G}$ (so $\mathcal{G}$ will still be a connected, bipartite graph), remove $P^{(j)}_\lambda$ from our list of projections, and replace $P^{(i)}_\mu$ with $R^{(i)}_\mu$ in our list of projections. 
\par
Continue the recursive process in the above paragraph until two vertices are left in $\mathcal{G}$ that must be connected by an edge.  Since $\mathcal{G}$ is bipartite, one of these two remaining vertices is a non-zero subprojection of a spectral projection of $M_1$ and the other is a non-zero subprojection of a spectral projection of $M_2$.  These two projections are Murray-von Neumann equivalent by the same $K$-theory argument used in Lemma \ref{backandforth}.
\par
By the same arguments as Lemma \ref{backandforth}, the Murray-von Neumann equivalence of the projections created in the above process allows us to create partial isometries and thus, by taking a sum, a unitary $U \in \mathfrak{A}$ with the claimed property.  Hence
\[
\left\|N_1 - U^*N_2U\right\| \leq (4 + 2\sqrt{2})\epsilon + d_H(\sigma(N_1), \sigma(N_2)).
\]
As $\epsilon > 0$ was arbitrary, the result follows.
\end{proof}
The above proof can be modified to show the following results.
\begin{cor}
Let $\mathfrak{A}$ be a unital, simple, purely infinite C$^*$-algebra and let $N_1, N_2 \in \mathfrak{A}$ be normal operators such that $\Gamma(N_1)$ and $\Gamma(N_2)$ are trivial.  Suppose for each $q \in \{1,2\}$ that $\sigma(N_q) = \bigcup^n_{i=1} K^{(q)}_i$ is a disjoint union of compact sets with $K^{(1)}_i$ connected for all $i \in \{1,\ldots, n\}$.  Let $\chi^{(q)}_i$ be the characteristic function of $K^{(q)}_i$ for all $q \in \{1,2\}$ and $i \in \{1,\ldots, n\}$.  If $\chi^{(1)}_i(N_1)$ and $\chi^{(2)}_i(N_2)$ are Murray-von Neumann equivalent for all $i \in \{1,\ldots, n\}$ then
\[
dist(\mathcal{U}(N_1), \mathcal{U}(N_2)) \leq \max_{i \in \{1,\ldots, n\}} d_H\left(K^{(1)}_i, K^{(2)}_i\right).
\]
\end{cor}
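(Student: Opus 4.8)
The plan is to reduce to the connected case of Lemma \ref{hausdistconnspec} by running the back-and-forth construction on each pair $(K^{(1)}_i, K^{(2)}_i)$ in isolation, with the Murray-von Neumann equivalence hypothesis supplying exactly the data needed for the recursion to close up on each piece. Only the displayed upper bound requires proof.

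First I would fix $\epsilon > 0$. Since the $K^{(q)}_i$ are pairwise disjoint compact sets, $\delta := \min\{dist(K^{(q)}_i, K^{(q)}_j) \mid i \neq j, \ q \in \{1,2\}\} > 0$, so for $\epsilon$ small enough the $\epsilon$-grid boxes meeting $\sigma(N_q)$ split into $n$ separated groups, the $i$-th lying in a small neighbourhood of $K^{(q)}_i$. Using {\cite[Theorem 4.4]{Li}}, Lemma \ref{densespectra2}, and the semicontinuity of the spectrum exactly as in Lemma \ref{hausdistconnspec}, I would produce finite-spectrum normal operators $M_q$ with $\|N_q - M_q\| \le 2\epsilon$, with $\sigma(M_q)$ equal to the centres of the relevant boxes, and with $\sigma(M_q)$ an $\epsilon$-net for $\sigma(N_q)$. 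Let $E^{(q)}_i$ be the spectral projection of $M_q$ for the eigenvalues in the $i$-th group.

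The key preparatory step transfers the hypothesis from $N_q$ to $M_q$. Because the pieces are $\delta$-separated, each $\chi^{(q)}_i$ is continuous on a neighbourhood of $\sigma(N_q)$, so by continuity of the functional calculus one may arrange $\|\chi^{(q)}_i(N_q) - \chi^{(q)}_i(M_q)\| < \tfrac12$, while $\chi^{(q)}_i(M_q) = E^{(q)}_i$ by the choice of grouping. Lemma \ref{simorbitofprojections} (with $V = I_\mathfrak{A}$) then shows that $\chi^{(q)}_i(N_q)$ and $E^{(q)}_i$ are Murray-von Neumann equivalent, and combining this with the hypothesis that $\chi^{(1)}_i(N_1)$ and $\chi^{(2)}_i(N_2)$ are Murray-von Neumann equivalent yields that $E^{(1)}_i$ and $E^{(2)}_i$ are Murray-von Neumann equivalent for every $i$.

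With this in hand I would apply the bipartite-graph argument of Lemma \ref{hausdistconnspec} separately on each piece: form a bipartite graph on the eigenvalues of $M_1$ and $M_2$ in the $i$-th group, joining $\lambda$ to $\mu$ whenever $|\lambda - \mu| \le 2\sqrt{2}\epsilon + d_H(K^{(1)}_i, K^{(2)}_i)$. Connectedness of $K^{(1)}_i$ makes this graph connected (only the first spectrum must be connected, as in Lemma \ref{hausdistconnspec}), and the recursive peeling produces a partial isometry $U_i$ with $U_i^*U_i = E^{(1)}_i$ and $U_iU_i^* = E^{(2)}_i$, the terminal matching being available precisely because $E^{(1)}_i$ and $E^{(2)}_i$ are Murray-von Neumann equivalent, and satisfying $\|M_1 E^{(1)}_i - U_i^* M_2 E^{(2)}_i U_i\| \le 2\sqrt{2}\epsilon + d_H(K^{(1)}_i, K^{(2)}_i)$. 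Since $\sum_i E^{(1)}_i = I_\mathfrak{A} = \sum_i E^{(2)}_i$, the sum $U := \sum_i U_i$ is unitary, and assembling the piecewise estimates gives $\|M_1 - U^*M_2U\| \le 2\sqrt{2}\epsilon + \max_i d_H(K^{(1)}_i, K^{(2)}_i)$, hence $\|N_1 - U^*N_2U\| \le (4 + 2\sqrt{2})\epsilon + \max_i d_H(K^{(1)}_i, K^{(2)}_i)$; letting $\epsilon \to 0$ completes the argument. I expect the main obstacle to lie in the preparatory step, namely in checking for all sufficiently small $\epsilon$ that the grid-based projection $E^{(q)}_i$ really coincides with $\chi^{(q)}_i(M_q)$ and sits within norm $\tfrac12$ of $\chi^{(q)}_i(N_q)$, so that the equivalence hypothesis passes cleanly to the approximants; everything downstream is a piecewise reprise of machinery already established.
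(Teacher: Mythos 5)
Your proposal is correct and follows essentially the same route as the paper: the paper's proof is precisely to run the back-and-forth bipartite-graph argument of Lemma \ref{hausdistconnspec} on each pair $(K^{(1)}_i, K^{(2)}_i)$ separately, using the Murray-von Neumann equivalence of the corresponding spectral projections to close the recursion on each piece, and then sum the resulting partial isometries into a unitary. The details you supply (separating the grid into $\delta$-separated groups and transferring the equivalence hypothesis to the finite-spectrum approximants via Lemma \ref{simorbitofprojections}) are exactly the steps the paper leaves implicit, mirroring the analogous passage in the proof of Theorem \ref{ausiuspi}.
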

\begin{proof}
Fix $\epsilon > 0$.  The condition that `$\chi^{(1)}_i(N_1)$ and $\chi^{(2)}_i(N_2)$ are Murray-von Neumann equivalent' allows the arguments of Lemma \ref{hausdistconnspec} to be applied on each pair $\left(K^{(1)}_i, K^{(2)}_i\right)$ to produce a partial isometry $V_i \in \mathfrak{A}$ such that $V_i^*V_i = \chi^{(1)}_i(N_1)$, $V_iV_i^* = \chi^{(2)}_i(N_2)$, and 
\[
\left\| N_1\chi^{(1)}_i(N_1)  - V_i^* N_2\chi^{(2)}_i(N_2)V_i\right\| < \epsilon +  d_H\left(K^{(1)}_i, K^{(2)}_i\right).
\]
If $U := \sum^k_{i=1} V_i \in \mathfrak{A}$ then $U$ is a unitary operator such that 
\[
\left\|N_1 - U^*N_2U\right\| < \epsilon + \max_{i \in \{1,\ldots, n\}} d_H\left(K^{(1)}_i, K^{(2)}_i\right).
\]
Hence the result follows.
\end{proof}
\begin{lem}
\label{unionconnected}
Let $\mathfrak{A}$ be a unital, simple, purely infinite C$^*$-algebra and let $N_1, N_2 \in \mathfrak{A}$ be normal operators such that $\Gamma(N_1)$ and $\Gamma(N_2)$ are trivial.  Suppose $\sigma(N_1) \cup \sigma(N_2)$ is connected.  Then
\[
dist(\mathcal{U}(N_1), \mathcal{U}(N_2)) = d_H(\sigma(N_1), \sigma(N_2)).
\]
\end{lem}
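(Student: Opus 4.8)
The plan is to run the argument of Lemma \ref{hausdistconnspec} essentially verbatim, retaining the entire ``back and forth'' machinery, the recursive vertex-removal, and the $K$-theory bookkeeping, and to modify \emph{only} the step that establishes connectivity of the auxiliary bipartite graph $\mathcal{G}$. The lower bound is immediate from Proposition \ref{lowerbounddistantofunitaryorbits}: since $\Gamma(N_1)$ and $\Gamma(N_2)$ are trivial we have $\Gamma(N_1)(\lambda) = \Gamma(N_2)(\lambda) = 0$ for every $\lambda \notin \sigma(N_1) \cup \sigma(N_2)$, so $\rho(N_1, N_2) = d_H(\sigma(N_1), \sigma(N_2))$ and hence $dist(\mathcal{U}(N_1), \mathcal{U}(N_2)) \geq d_H(\sigma(N_1), \sigma(N_2))$.

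For the upper bound, fix $\epsilon > 0$, take the $\epsilon$-grid $B_{n,m}$ of Lemma \ref{backandforth}, and—using {\cite[Theorem 4.4]{Li}}, Lemma \ref{densespectra2}, the semicontinuity of the spectrum, and small perturbations—produce normal operators $M_1, M_2 \in \mathfrak{A}$ with finite spectrum whose spectra are precisely the centres of the $N_q$-relevant boxes and with $\left\|N_q - M_q\right\| \leq 2\epsilon$. Build the bipartite graph $\mathcal{G}$ on the vertex set consisting of $\sigma(M_1)$ and $\sigma(M_2)$ (with a separate vertex in each part when a centre lies in both), using the same edge rule as in Lemma \ref{hausdistconnspec}. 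As there, every vertex is an endpoint of at least one edge, and any two vertices joined by an edge correspond to points within $2\sqrt{2}\epsilon + d_H(\sigma(N_1), \sigma(N_2))$ of each other.

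The crux, and the only place where the hypothesis enters differently, is the connectivity of $\mathcal{G}$: in Lemma \ref{hausdistconnspec} this rested on connectivity of $\sigma(N_1)$ alone, whereas here I would use that $\sigma(N_1) \cup \sigma(N_2)$ is connected, so that the union of \emph{all} relevant boxes (of either type) is connected under king-move adjacency, giving for any two relevant box-centres a chain of successively adjacent relevant boxes. It then suffices to check that consecutive boxes in such a chain have centres joined by a path in $\mathcal{G}$, now allowing boxes of mixed type. There are three cases. If the two adjacent boxes are both $N_1$-relevant (resp.\ both $N_2$-relevant), with centres $\lambda, \lambda'$, I route through a nearby opposite-type vertex $\mu$ exactly as in Lemma \ref{hausdistconnspec}: such a $\mu$ always exists, and the edge rule joins $\mu$ both to $\lambda$ and to every neighbour of $\lambda$'s box, in particular to $\lambda'$, yielding the path $\lambda$--$\mu$--$\lambda'$. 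If the two adjacent boxes are of opposite type, with centres $\lambda \in \sigma(M_1)$ and $\nu \in \sigma(M_2)$, then $|\lambda - \nu| \leq \sqrt{2}\epsilon \leq 2\sqrt{2}\epsilon + d_H(\sigma(N_1), \sigma(N_2))$, so the edge rule joins $\lambda$ and $\nu$ directly. Finally, a box that is simultaneously $N_1$- and $N_2$-relevant contributes two coincident vertices joined by the distance-$0$ edge. Thus $\mathcal{G}$ is connected.

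Once connectivity of $\mathcal{G}$ is secured, the remainder of the proof of Lemma \ref{hausdistconnspec} applies word for word: recursively delete a vertex whose removal does not disconnect $\mathcal{G}$, split the opposite-endpoint projection using pure infiniteness, and record the resulting Murray--von Neumann equivalences; the final pair of projections is Murray--von Neumann equivalent by the $K$-theory computation of Lemma \ref{backandforth} together with {\cite[Theorem 1.4]{Cu}}; assembling the associated partial isometries yields a unitary $U \in \mathfrak{A}$ with $\left\|M_1 - U^*M_2U\right\| \leq 2\sqrt{2}\epsilon + d_H(\sigma(N_1), \sigma(N_2))$, whence $\left\|N_1 - U^*N_2U\right\| \leq (4 + 2\sqrt{2})\epsilon + d_H(\sigma(N_1), \sigma(N_2))$. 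Letting $\epsilon \to 0$ finishes the argument. I expect the one genuinely new point—the main obstacle—to be verifying that the mixed-type adjacent boxes are correctly handled, which is exactly where the diagonal-adjacency distance $\sqrt{2}\epsilon$ must fall under the edge threshold; the rest is a routine transcription of Lemma \ref{hausdistconnspec}.
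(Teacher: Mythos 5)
Your proposal is correct and follows essentially the same route as the paper: both keep the entire argument of Lemma \ref{hausdistconnspec} intact and modify only the verification that the bipartite graph $\mathcal{G}$ is connected, using the connectivity of $\sigma(N_1) \cup \sigma(N_2)$ in place of that of $\sigma(N_1)$. The paper organizes this step by grouping the $N_q$-relevant boxes into connected components $K^{(q)}_k$ and linking components whose unions are connected, while you walk along a chain of adjacent relevant boxes with a three-case analysis on box types; these are cosmetically different phrasings of the same observation.
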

\begin{proof}
The only caveat in the proof of Lemma \ref{hausdistconnspec} is that we require that the bipartite graph $\mathcal{G}$ is connected.  For each $q \in \{1,2\}$ let $K^{(q)}$ be the union of the $N_q$-relevant $B_{n,m}$.  Therefore, each $K^{(q)}$ is a union of finitely many connected components.  For each $q \in \{1,2\}$ let $\left\{K^{(q)}_k\right\}^{n_q}_{k=1}$ be the connected components of $K^{(q)}$.  By the construction of $\mathcal{G}$ in Lemma \ref{hausdistconnspec}, for each $q \in \{1,2\}$ all vertices from $\sigma(M_q)$ inside $K^{(q)}_k$ are connected in $\mathcal{G}$.  Moreover, for $i,j \in \{1,2\}$ with $i\neq j$, each vertex from $\sigma(M_j)$ inside $K^{(i)}_k$ is connected to each vertex of $K^{(j)}_\ell$ provided that $K^{(i)}_k \cup K^{(j)}_\ell$ is connected.  Since $\sigma(N_1) \cup \sigma(N_2)$ is connected, 
\[
\left(\bigcup^{n_1}_{k=1} K^{(1)}_k\right) \cup\left(\bigcup^{n_2}_{k=1} K^{(2)}_k\right)
\]
is connected and thus $\mathcal{G}$ is connected.  The remainder of the proof then follows as in Lemma \ref{hausdistconnspec}.
\end{proof}
Combining the ideas of the above results, we obtain the following
\begin{prop}
\label{unionwithinepsilondistanceresult}
Let $\mathfrak{A}$ be a unital, simple, purely infinite C$^*$-algebra, let $\epsilon > 0$, and let $N_1, N_2 \in \mathfrak{A}$ be normal operators such that $\Gamma(N_1)$ and $\Gamma(N_2)$ are trivial.  Let $B_{n,m}$ be as in Lemma \ref{backandforth}.  For each $q \in \{1,2\}$, we will say that $B_{n,m}$ is $N_q$-relevant if $B_{n,m} \cap \sigma(N_q) \neq \emptyset$.  For each $q \in \{1,2\}$ let $K^{(q)}$ be the union of the $N_q$-relevant boxes.  Suppose there exists an $n \in \mathbb{N}$ such that for each $q \in \{1,2\}$ we can write $K^{(q)} = \bigcup^n_{i=1} K^{(q)}_i$ where each $K^{(q)}_i$ is the union of finitely many connected sets, each $K^{(1)}_i \cup K^{(2)}_i$ is connected, and
\[
dist\left(K^{(1)}_i \cup K^{(2)}_i, K^{(1)}_j \cup K^{(2)}_j\right) > 0
\]
whenever $i \neq j$.  Let $\chi^{(q)}_i$ be the characteristic function of $K^{(q)}_i$ for all $q \in \{1,2\}$ and $i \in \{1,\ldots, n\}$.  If $\chi^{(1)}_i(N_1)$ and $\chi^{(2)}_i(N_2)$ are Murray-von Neumann equivalent for all $i \in \{1,\ldots, n\}$ then
\[
dist(\mathcal{U}(N_1), \mathcal{U}(N_2)) \leq (4 + 2\sqrt{2})\epsilon + \max_{i \in \{1,\ldots, n\}} d_H\left(K^{(1)}_i, K^{(2)}_i\right).
\]
\end{prop}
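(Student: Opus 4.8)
The plan is to synthesize the arguments of Lemma \ref{hausdistconnspec}, Lemma \ref{unionconnected}, and the corollary preceding this proposition, by running the connected `back and forth' construction separately on each cluster $K^{(1)}_i \cup K^{(2)}_i$ and then gluing the resulting partial isometries into a single unitary. The positive-distance hypothesis is what allows the clusters to be treated independently, while the Murray-von Neumann equivalence of $\chi^{(1)}_i(N_1)$ and $\chi^{(2)}_i(N_2)$ is exactly the ingredient needed so that the $K$-theory step that terminates each cluster's construction can be carried out.

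First I would set up the grid as in Lemma \ref{backandforth} and use Theorem \ref{linfa} together with Lemma \ref{densespectra2}, the semicontinuity of the spectrum, and small perturbations to produce normal operators $M_1, M_2 \in \mathfrak{A}$ with finite spectrum equal to the centres of the $N_q$-relevant boxes and with $\left\|N_q - M_q\right\| \leq 2\epsilon$. Since the sets $K^{(1)}_i \cup K^{(2)}_i$ are pairwise at positive distance, the pieces $K^{(q)}_i$ are pairwise separated, so each $\chi^{(q)}_i$ is locally constant, hence analytic, on a neighbourhood of $K^{(q)} \supseteq \sigma(N_q)$; thus $\chi^{(q)}_i(N_q)$ and $\chi^{(q)}_i(M_q)$ are genuine projections, and for each fixed $q$ the family $\{\chi^{(q)}_i(M_q)\}_{i=1}^n$ is orthogonal and sums to $I_\mathfrak{A}$. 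By continuity of the functional calculus I can arrange $\left\|\chi^{(q)}_i(N_q) - \chi^{(q)}_i(M_q)\right\| < \tfrac12$, so Lemma \ref{simorbitofprojections} shows $\chi^{(q)}_i(N_q)$ and $\chi^{(q)}_i(M_q)$ are Murray-von Neumann equivalent; combined with the hypothesis this yields that $\chi^{(1)}_i(M_1)$ and $\chi^{(2)}_i(M_2)$ are Murray-von Neumann equivalent for every $i$.

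Next, for each $i$ I would build a bipartite graph $\mathcal{G}_i$ on the eigenvalues of $M_1$ and $M_2$ lying in $K^{(1)}_i$ and $K^{(2)}_i$, placing edges between $\lambda$ and $\mu$ whenever $|\lambda - \mu| \leq 2\sqrt{2}\epsilon + d_H(K^{(1)}_i, K^{(2)}_i)$ and to adjacent box centres, exactly as in Lemma \ref{hausdistconnspec}. Connectivity of $\mathcal{G}_i$ follows from the connectedness of $K^{(1)}_i \cup K^{(2)}_i$ by the argument of Lemma \ref{unionconnected}. Running the recursive removal process of Lemma \ref{hausdistconnspec} on $\mathcal{G}_i$ produces a chain of Murray-von Neumann equivalences; the only nontrivial point is the final step, where two leftover subprojections must be shown equivalent. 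Here the running orthogonal sums are $\chi^{(1)}_i(M_1)$ and $\chi^{(2)}_i(M_2)$ rather than $I_\mathfrak{A}$, so in place of the identity in the computation of Lemma \ref{backandforth} I would use the equality $[\chi^{(1)}_i(M_1)]_0 = [\chi^{(2)}_i(M_2)]_0$ established in the previous step; the same $K_0$-cancellation then shows the two leftover projections have equal class and so are Murray-von Neumann equivalent. The partial isometries furnished by these equivalences assemble into a single $V_i \in \mathfrak{A}$ with $V_i^*V_i = \chi^{(1)}_i(M_1)$, $V_iV_i^* = \chi^{(2)}_i(M_2)$, and
\[
\left\| M_1\chi^{(1)}_i(M_1) - V_i^*M_2\chi^{(2)}_i(M_2)V_i\right\| \leq 2\sqrt{2}\epsilon + d_H\!\left(K^{(1)}_i, K^{(2)}_i\right).
\]

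Finally, since $\{\chi^{(q)}_i(M_q)\}_{i=1}^n$ is orthogonal and sums to $I_\mathfrak{A}$ for each $q$, the operator $U := \sum_{i=1}^n V_i$ is a unitary in $\mathfrak{A}$, and summing the cluster estimates gives $\left\|M_1 - U^*M_2U\right\| \leq 2\sqrt{2}\epsilon + \max_i d_H(K^{(1)}_i, K^{(2)}_i)$; absorbing the two errors $\left\|N_q - M_q\right\| \leq 2\epsilon$ yields the claimed bound $(4+2\sqrt{2})\epsilon + \max_i d_H(K^{(1)}_i, K^{(2)}_i)$ on $\left\|N_1 - U^*N_2U\right\|$. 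I expect the main obstacle to be the termination of the per-cluster back and forth: in Lemma \ref{backandforth} the final equivalence was automatic because the orthogonal sums exhausted the identity, whereas here it must be forced cluster-by-cluster, and the delicate bookkeeping is to verify that the $K_0$-cancellation argument localizes correctly to each pair $\chi^{(1)}_i(M_1), \chi^{(2)}_i(M_2)$ and that each $\mathcal{G}_i$ can be pruned without disconnecting while leaving exactly one $M_1$-subprojection and one $M_2$-subprojection at the end.
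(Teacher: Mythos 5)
Your proposal is correct and follows essentially the same route as the paper: the paper's own proof simply invokes the per-cluster `back and forth' construction of Lemmas \ref{hausdistconnspec} and \ref{unionconnected}, using the hypothesis that $\chi^{(1)}_i(N_1)$ and $\chi^{(2)}_i(N_2)$ are Murray-von Neumann equivalent to terminate each cluster, and sums the resulting partial isometries into a unitary. Your write-up is in fact more detailed than the paper's (in particular the transfer of the equivalence hypothesis from the $N_q$ to the finite-spectrum approximants $M_q$ via Lemma \ref{simorbitofprojections}, and the localized $K_0$-cancellation at the end of each cluster), and the bookkeeping of the $\epsilon$'s agrees with the stated bound.
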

\begin{proof}
Let $\mathcal{G}$ be the bipartite graph described in Lemma \ref{hausdistconnspec} for this selection of $\epsilon$.  The graph $\mathcal{G}$ is not connected but the conditions of this proposition allows the proof of Lemma \ref{unionconnected} to performed on the vertices of $K^{(1)}_i \cup K^{(2)}_i$ separately to construct partial isometries $V_i \in \mathfrak{A}$ such that $V_i^*V_i = \chi^{(1)}_i(N_1)$, $V_iV_i^* = \chi^{(2)}_i(N_2)$, and 
\[
\left\| N_1\chi^{(1)}_i(N_1)  - V_i^* N_2\chi^{(2)}_i(N_2)V_i\right\| \leq (4 + 2\sqrt{2})\epsilon +  d_H\left(K^{(1)}_i, K^{(2)}_i\right).
\]
If $U := \sum^k_{i=1} V_i \in \mathfrak{A}$ then $U$ is a unitary operator such that 
\[
\left\|N_1 - U^*N_2U\right\| < (4 + 2\sqrt{2})\epsilon + \max_{i \in \{1,\ldots, n\}} d_H\left(K^{(1)}_i, K^{(2)}_i\right).
\]
Hence the result follows.
\end{proof}
\begin{cor}
\label{hausdorffdistanceforsimilarity}
Let $\mathfrak{A}$ be a unital, simple, purely infinite C$^*$-algebra and let $N_1, N_2 \in \mathfrak{A}$ be normal operators such that $\Gamma(N_1)$ and $\Gamma(N_2)$ are trivial. Suppose
\begin{enumerate}
	\item  $\sigma(N_2) \subseteq \sigma(N_1)$ and
	\item $N_1$ and $N_2$ have equivalent common spectral projections.
\end{enumerate}
Then
\[
dist(\mathcal{U}(N_1), \mathcal{U}(N_2)) =  d_H\left(\sigma(N_1), \sigma(N_2)\right).
\]
\end{cor}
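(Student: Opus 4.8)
The plan is to prove the two inequalities separately. The lower bound $dist(\mathcal{U}(N_1), \mathcal{U}(N_2)) \geq d_H(\sigma(N_1), \sigma(N_2))$ is immediate from Proposition \ref{lowerbounddistantofunitaryorbits}: since $\Gamma(N_1)$ and $\Gamma(N_2)$ are trivial, the index term in $\rho(N_1, N_2)$ is vacuous, so $\rho(N_1, N_2) = d_H(\sigma(N_1), \sigma(N_2))$. The entire content is therefore the reverse inequality $dist(\mathcal{U}(N_1), \mathcal{U}(N_2)) \leq d_H(\sigma(N_1), \sigma(N_2)) =: r$.

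For the upper bound I would fix $\epsilon > 0$ and aim to produce a normal operator $M$ with $\sigma(M) = \sigma(N_2)$, with $\Gamma(M)$ trivial, and having equivalent common spectral projections with $N_2$ — so that $M \sim_{au} N_2$ by Theorem \ref{ausiuspi} — together with an operator $N_1' \sim_{au} N_1$ satisfying $\|N_1' - M\| \leq r + O(\epsilon)$. Since approximately unitarily equivalent operators have the same closed unitary orbit, this yields $dist(\mathcal{U}(N_1), \mathcal{U}(N_2)) = dist(\mathcal{U}(N_1'), \mathcal{U}(N_2)) \leq \|N_1' - M\| \leq r + O(\epsilon)$, and letting $\epsilon \to 0$ finishes the proof. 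The freedom to replace $N_1$ by an approximately unitarily equivalent $N_1'$ is essential and is supplied by the connected-spectrum case: by Lemma \ref{backandforth} the internal distribution of $K_0$-mass inside a connected component of $\sigma(N_1)$ is invisible to $\sim_{au}$, so I may redistribute it freely while keeping the class $[\chi_U(N_1)]_0$ fixed on every clopen piece $U$, and Theorem \ref{ausiuspi} then guarantees $N_1' \sim_{au} N_1$.

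To build $M$ I would discretise as in Lemma \ref{hausdistconnspec}: approximate $N_1, N_2$ by finite-spectrum operators supported on the centres of their relevant $\epsilon$-boxes, and for each relevant $N_1$-box centre $\lambda$ choose a nearest point $g(\lambda) \in \sigma(N_2)$, so that $|\lambda - g(\lambda)| = dist(\lambda, \sigma(N_2)) \leq r$. The naive choice built from $g$ fails only at the level of $K$-theory: the spectral projection of the resulting $M$ over a component $G$ of $\sigma(N_2)$ carries the class $[\chi_{g^{-1}(G)}(N_1)]_0$, and the nearest-point region $g^{-1}(G)$ is not a clopen piece, so this class is generally wrong (mass is forced across the gaps of $\sigma(N_2)$). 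The governing combinatorial object is the bipartite graph joining the box-components $L_i$ of $\sigma(N_1)$ to the components $G_j$ of $\sigma(N_2)$, with an edge whenever $g$ sends part of $L_i$ into $G_j$; using the redistribution freedom above I must choose classes $c_{ij} = [\chi_{L_i \cap g^{-1}(G_j)}(N_1')]_0$ with prescribed row sums $[\chi_{L_i}(N_1)]_0$ and column sums $[\chi_{G_j}(N_2)]_0$. This is a transportation problem over the group $K_0(\mathfrak{A})$.

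The main obstacle — and the only place hypothesis (2) enters — is the feasibility of this $K_0$-valued transportation problem, which over an abelian group holds precisely when the row- and column-sums balance on each connected component of the bipartite graph. I would show that each such component corresponds to a clopen piece $U$ of $\sigma(N_1)$ with $\sigma(N_2) \cap U$ the associated piece of $\sigma(N_2)$: here $\sigma(N_2) \subseteq \sigma(N_1)$ and the geometry of $g$ are used, since every point of $G_j$ lies in $\sigma(N_1)$ and is its own nearest point (linking each $G_j$ to its host $L_i$), while distinct box-components lie at positive distance and are hence clopen. On such a $U$ hypothesis (2) gives exactly $[\chi_U(N_1)]_0 = [\chi_U(N_2)]_0$, the required balance. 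Solving the transportation problem and realising the classes $c_{ij}$ by redistributing $N_1$'s spectrum inside each $L_i$ produces $N_1'$, after which the nearest-point assignment matches $N_1'$ to $N_2$ component-by-component within $r + O(\epsilon)$ through the partial-isometry (back-and-forth) construction of Lemma \ref{hausdistconnspec}. I expect the delicate technical points to be the passage from the finite approximations back to honest operators with $\sigma(M) = \sigma(N_2)$, and the fact that every nonzero projection in a purely infinite simple algebra splits into subprojections of arbitrary prescribed $K_0$-class (so that each $c_{ij}$ can actually be realised) — both consequences of the structure theory already invoked, in particular {\cite[Theorem 1.4]{Cu}}.
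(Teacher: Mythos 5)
Your proposal is correct in substance but is organized quite differently from the paper's argument, which is a two-line reduction: the lower bound is Proposition \ref{lowerbounddistantofunitaryorbits}, and for the upper bound one simply checks that hypotheses (1) and (2) put you in the situation of Proposition \ref{unionwithinepsilondistanceresult} for every $\epsilon$ (take $K^{(1)}_i$ to be the components $L_i$ of the union of the $N_1$-relevant boxes and $K^{(2)}_i$ the $N_2$-relevant boxes inside $L_i$; hypothesis (2) applied to $\chi_{L_i}$ gives the required Murray--von Neumann equivalences, and $d_H(K^{(1)}_i,K^{(2)}_i)\leq d_H(\sigma(N_1),\sigma(N_2))+O(\epsilon)$). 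In that route all of the $K$-theory is absorbed into a single connected bipartite graph per $L_i$, with the balance condition invoked only once, at the last two vertices. You instead make the redistribution explicit as a transportation problem over $K_0(\mathfrak{A})$ on the component level and then match by nearest points. This is workable --- the feasibility over an abelian group on a connected balanced bipartite graph, and the realizability of each class $c_{ij}$ by a non-zero subprojection, both hold in a unital, simple, purely infinite C$^*$-algebra by {\cite[Theorem 1.4]{Cu}} --- and it isolates exactly where hypothesis (2) enters; but it buys nothing over the paper's version, since the back-and-forth on a connected graph already performs the redistribution implicitly.

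Two soft spots you should repair. First, you index the columns of your transportation problem by the connected components $G_j$ of $\sigma(N_2)$; these may be infinite in number and need not be relatively clopen, so $[\chi_{G_j}(N_2)]_0$ is not defined in general (e.g.\ $\sigma(N_2)=\{0\}\cup\{1/n : n\geq 1\}$). You must work with the components of the union of the $N_2$-relevant $\epsilon$-boxes, which are finitely many and pairwise at distance at least $\epsilon$, exactly as in Lemma \ref{hausdistconnspec}. Second, the phrase ``Theorem \ref{ausiuspi} then guarantees $N_1'\sim_{au}N_1$'' is too strong as stated: your $N_1'$ is a finite-spectrum redistribution, and for a finite-spectrum operator every spectral point is clopen, so equivalent common spectral projections would force \emph{every} point mass to match in $K_0$, destroying the freedom you need. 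The correct justification is the tree-based back-and-forth of Lemma \ref{backandforth} applied within each $L_i$, which shows that two finite-spectrum approximants with the same spectrum (the box centres of $L_i$) and equal $K_0$-totals over each $L_i$ lie within $\sqrt{2}\epsilon$ of each other's unitary orbits; this $O(\epsilon)$ loss is harmless. With those adjustments your argument closes.
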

\begin{proof}
One inequality follows from Proposition \ref{lowerbounddistantofunitaryorbits}.  The two conditions listed in this corollary imply the suppositions of Proposition \ref{unionwithinepsilondistanceresult} are satisfied for every choice of $\epsilon$.  Hence
\[
dist(\mathcal{U}(N_1), \mathcal{U}(N_2)) \leq (4 + 2\sqrt{2})\epsilon + d_H\left(\sigma(N_1), \sigma(N_2)\right).
\]
for all $\epsilon > 0$.
\end{proof}
We have made use of the equivalence of certain spectral projections in the creation of all of the above bounds.  To illustrate the necessity of these assumptions, we note the following example.
\begin{exam}
Let $P$ and $Q$ be non-trivial projections in $\mathcal{O}_3$ with $[P]_0 \neq [Q]_0$.  Then $\sigma(P) = \sigma(Q)$ yet $dist(\mathcal{U}(P), \mathcal{U}(Q)) \geq 1$ or else $P$ and $Q$ would be Murray-von Neumann equivalent (see {\cite[Proposition 2.2.4]{RLL}} and \cite[Proposition 2.2.7]{RLL}).
\end{exam}
In particular, we have the following quantitative version of the above example.
\begin{prop}
Let $\mathfrak{A}$ be a unital C$^*$-algebra, let $N_1, N_2 \in \mathfrak{A}$ be normal operators, and let $f : \mathbb{C} \to \mathbb{C}$ be a function that is analytic on an open neighbourhood $U$ of $\sigma(N_1) \cup \sigma(N_2)$ with $f(U) \subseteq \{0,1\}$.  Let $\gamma$ be a compact, rectifiable curve inside $U$ with $(\sigma(N_1) \cup \sigma(N_2)) \cap \gamma =\emptyset$, $Ind_\gamma(z) \in \{0,1\}$ for all $z \in \mathbb{C} \setminus \gamma$, $Ind_\gamma(z) = 1$ for all $z \in \sigma(N_1) \cup \sigma(N_2)$, and $\{z \in \mathbb{C} \, \mid\, Ind_\gamma(z) \neq 0\}\subseteq U$.  If $f(N_1)$ and $f(N_2)$ are not Murray-von Neumann equivalent then
\[
dist(\mathcal{U}(N_1), \mathcal{U}(N_2)) \geq \frac{2\pi}{l_0(\gamma) \sup_{z \in \gamma} \left\|(z I_\mathfrak{A} - N_1)^{-1}\right\|\left\|(z I_\mathfrak{A} - N_2)^{-1}\right\|}
\]
where $l_0(\gamma)$ is the length of $\gamma$ in the regions where $f(z) = 1$.
\end{prop}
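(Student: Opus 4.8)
The plan is to run the quantitative analogue of the functional-calculus computation already carried out in Lemma \ref{analytic}. First I would record two structural facts. Since $f$ is analytic on $U$ with $f(U) \subseteq \{0,1\}$, it is locally constant, so $f(N_1)$ and $f(N_2)$ are genuine projections and the analytic functional calculus gives
\[
f(N_q) = \frac{1}{2\pi i}\int_\gamma f(z)(zI_\mathfrak{A} - N_q)^{-1}\,dz \qquad (q \in \{1,2\}).
\]
Second, by unitary invariance of the norm one has $dist(\mathcal{U}(N_1),\mathcal{U}(N_2)) = \inf\{\|N_1 - WN_2W^*\| \mid W \in \mathcal{U}(\mathfrak{A})\}$, so it suffices to bound $\|N_1 - B\|$ from below for an arbitrary $B := WN_2W^*$ with $W$ unitary.

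Fixing such a $B$, the resolvent identity (exactly as in Lemma \ref{analytic}) yields
\[
f(N_1) - f(B) = \frac{1}{2\pi i}\int_\gamma f(z)(zI_\mathfrak{A} - N_1)^{-1}(N_1 - B)(zI_\mathfrak{A} - B)^{-1}\,dz,
\]
where $\gamma$ is an admissible contour for both $N_1$ and $B$ (note $\sigma(B) = \sigma(N_2)$, so the hypotheses imposed on $\gamma$ apply verbatim to $B$). Taking norms, using that $|f(z)| = 1$ precisely on the part of $\gamma$ where $f(z) = 1$ and $|f(z)| = 0$ elsewhere so that the integral collapses to the sub-arc of length $l_0(\gamma)$, and using that conjugation by $W$ preserves resolvent norms so that $\|(zI_\mathfrak{A} - B)^{-1}\| = \|(zI_\mathfrak{A} - N_2)^{-1}\|$, I would obtain
\[
\|f(N_1) - f(B)\| \leq \frac{l_0(\gamma)\,\|N_1 - B\|}{2\pi}\sup_{z \in \gamma}\|(zI_\mathfrak{A} - N_1)^{-1}\|\,\|(zI_\mathfrak{A} - N_2)^{-1}\|.
\]

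The final ingredient is a projection obstruction. The operator $f(B) = Wf(N_2)W^*$ is unitarily equivalent, hence Murray-von Neumann equivalent, to $f(N_2)$; since $f(N_1)$ and $f(N_2)$ are assumed not Murray-von Neumann equivalent, $f(N_1)$ and $f(B)$ are not Murray-von Neumann equivalent either. The classical fact that two projections at norm distance strictly less than $1$ are Murray-von Neumann equivalent (see {\cite[Proposition 2.2.4]{RLL}} and {\cite[Proposition 2.2.7]{RLL}}) then forces $\|f(N_1) - f(B)\| \geq 1$. Substituting this into the displayed inequality and solving for $\|N_1 - B\|$ gives
\[
\|N_1 - B\| \geq \frac{2\pi}{l_0(\gamma)\sup_{z \in \gamma}\|(zI_\mathfrak{A} - N_1)^{-1}\|\,\|(zI_\mathfrak{A} - N_2)^{-1}\|},
\]
and taking the infimum over all unitaries $W$ produces the claimed bound.

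I do not anticipate a serious obstacle here: the argument is essentially a bookkeeping refinement of Lemma \ref{analytic}. The only two points requiring care are confirming that $\gamma$ remains an admissible functional-calculus contour for $B$ (immediate from $\sigma(B) = \sigma(N_2)$) and insisting on the sharp threshold $1$ in the projection estimate rather than the weaker $\tfrac{1}{2}$ available from Lemma \ref{simorbitofprojections}, since it is precisely this sharp constant that yields the factor $2\pi$ in the numerator rather than $\pi$.
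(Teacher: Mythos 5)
Your proposal is correct and follows essentially the same route as the paper: the quantitative resolvent-identity estimate from the proof of Lemma \ref{analytic} restricted to the sub-arc where $f=1$, combined with the fact that two projections at norm distance strictly less than $1$ are Murray--von Neumann equivalent, which forces $\left\|f(N_1) - Wf(N_2)W^*\right\| \geq 1$. Your closing remark about needing the sharp threshold $1$ rather than the $\tfrac{1}{2}$ of Lemma \ref{simorbitofprojections} is exactly the point, and matches the paper's citation of {\cite[Proposition 2.2.5]{RLL}} and {\cite[Proposition 2.2.7]{RLL}}.
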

\begin{proof}
By the proof of Lemma \ref{analytic}, we know that $\left\|f(N_1) - Uf(N_2)U^*\right\|$ is at most
\[
\frac{l_0(\gamma) \left\|N_1 - UN_2U^*\right\|}{2\pi} \sup_{z \in \gamma} \left\|(z I_\mathfrak{A} - N_1)^{-1}\right\|\left\|(z I_\mathfrak{A} - N_2)^{-1}\right\|
\]
for all unitaries $U$ in $\mathfrak{A}$.  Since $f(N_1)$ and $f(N_2)$ are not Murray-von Neumann equivalent, $f(N_1)$ and $Uf(N_2)U^*$ are not Murray-von Neumann equivalent so
\[
1 \leq \left\|f(N_1) - Uf(N_2)U^*\right\|
\]
by {\cite[Proposition 2.2.5]{RLL}} and {\cite[Proposition 2.2.7]{RLL}}.  Hence the result follows.
\end{proof}
To complete this section, we desire to examine the distance between unitary orbits of normal operators with non-trivial index function.  Unfortunately, as this problem is not complete even for the Calkin algebra and due to the technical restraints illustrated above, a complete description of the distance between unitary orbits will not be given.  In particular our goal is to generalize Corollary \ref{hausdorffdistanceforsimilarity} to a sufficient degree to be used in Section \ref{sec:CLOSESIMORBITS}.  We shall proceed with this goal by attempting to modify {\cite[Theorem 1.4]{Da1}} via an application of {\cite[Theorem 1.7]{Dad}}.
\par
As in the proof of {\cite[Theorem 1.4]{Da1}}, we will need a notion of direct sums inside unital, simple, purely infinite C$^*$-algebras.  This leads us to the following construction.
\begin{lem}
\label{UHFdirectsum}
Let $\mathfrak{A}$ be a unital, simple, purely infinite C$^*$-algebra, let $V \in \mathfrak{A}$ be a non-unitary isometry, and let $P := VV^*$.  Then there exists a unital embedding of the $2^\infty$-UHF C$^*$-algebra $\mathfrak{B} := \overline{\bigcup_{\ell\geq 1} \mathcal{M}_{2^\ell}(\mathbb{C})}$ into $(I_\mathfrak{A} - P)\mathfrak{A}(I_\mathfrak{A} - P)$ such that $[Q]_0 = 0$ in $\mathfrak{A}$ for every projection $Q \in \mathfrak{B}$.
\end{lem}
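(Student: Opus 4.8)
The plan is to realize the embedding as the inductive limit of a nested sequence of unital copies of $\mathcal{M}_{2^\ell}(\mathbb{C})$ sitting inside the corner $E_0\mathfrak{A}E_0$, where $E_0 := I_\mathfrak{A} - P$, while arranging at every stage that each minimal matrix unit has trivial class in $K_0(\mathfrak{A})$. First I would record the one computation where the hypotheses on $V$ enter. Since $V$ is an isometry it is a partial isometry with $V^*V = I_\mathfrak{A}$ and $VV^* = P$, so $[P]_0 = [V^*V]_0 = [I_\mathfrak{A}]_0$ and hence $[E_0]_0 = [I_\mathfrak{A}]_0 - [P]_0 = 0$; moreover $E_0 \neq 0$ because $V$ is non-unitary. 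From here I work entirely in the (again unital, simple, purely infinite) corner $E_0\mathfrak{A}E_0$ with unit $E_0$.

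The key tool is a splitting lemma: if $E$ is a non-zero projection in $\mathfrak{A}$ with $[E]_0 = 0$, then $E = F_1 + F_2$ for orthogonal non-zero projections with $[F_1]_0 = [F_2]_0 = 0$, and consequently $F_1 \sim F_2$ by {\cite[Theorem 1.4]{Cu}}. To prove it I would use that every non-zero projection in a simple purely infinite C$^*$-algebra is infinite, so there is a partial isometry $W$ with $W^*W = E$ and $F_1 := WW^* \lneq E$. Then $[F_1]_0 = [E]_0 = 0$, the projection $F_2 := E - F_1$ is non-zero with $[F_2]_0 = [E]_0 - [F_1]_0 = 0$, and both being non-zero with equal $K_0$-class they are Murray--von Neumann equivalent. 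Adjoining a partial isometry $w$ implementing $F_1 \sim F_2$ gives a unital copy of $\mathcal{M}_2(\mathbb{C})$ in $E\mathfrak{A}E$ all of whose minimal projections have trivial class.

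For the inductive construction, suppose matrix units $\{e^{(\ell)}_{ij}\}_{i,j=1}^{2^\ell}$ for a unital copy of $\mathcal{M}_{2^\ell}(\mathbb{C})$ in $E_0\mathfrak{A}E_0$ have been built with $[e^{(\ell)}_{11}]_0 = 0$, whence all diagonal units have trivial class by mutual equivalence. Applying the splitting lemma to $e^{(\ell)}_{11}$ yields $e^{(\ell)}_{11} = g_1 + g_2$ and a partial isometry $w$ with $w^*w = g_1$, $ww^* = g_2$, $[g_a]_0 = 0$. I then transport these across the blocks by setting $g^{(i)}_a := e^{(\ell)}_{i1} g_a e^{(\ell)}_{1i}$ and $w^{(i)} := e^{(\ell)}_{i1} w e^{(\ell)}_{1i}$; since $e^{(\ell)}_{i1}g_a$ is a partial isometry with initial projection $g_a$ and final projection $g^{(i)}_a$, one gets $g^{(i)}_a \sim g_a$ and $[g^{(i)}_a]_0 = 0$. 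Re-indexing the $2^{\ell+1}$ projections $\{g^{(i)}_a\}$ and composing the existing $e^{(\ell)}_{ij}$ with the within-block isometries $w^{(i)}$ produces matrix units $\{e^{(\ell+1)}_{pq}\}_{p,q=1}^{2^{\ell+1}}$ summing to $E_0$, containing $\mathcal{M}_{2^\ell}(\mathbb{C})$ as the standard unital subalgebra, and with $[e^{(\ell+1)}_{11}]_0 = 0$. Passing to the inductive limit of these nested matrix algebras gives the desired unital embedding of $\mathfrak{B} = \overline{\bigcup_\ell \mathcal{M}_{2^\ell}(\mathbb{C})}$ into $E_0\mathfrak{A}E_0$. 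To verify that \emph{every} projection $Q \in \mathfrak{B}$ has trivial class, I would use density of $\bigcup_\ell \mathcal{M}_{2^\ell}(\mathbb{C})$ together with a continuous-functional-calculus argument to find a projection $Q' \in \mathcal{M}_{2^\ell}(\mathbb{C})$ with $\|Q - Q'\| < \frac{1}{2}$, so that $Q \sim Q'$ in $\mathfrak{A}$ by Lemma \ref{simorbitofprojections} (taking $V = I_\mathfrak{A}$) and $[Q]_0 = [Q']_0$; but $Q'$ is equivalent in $\mathcal{M}_{2^\ell}(\mathbb{C})$ to a sum of $r$ of the minimal units, giving $[Q']_0 = r[e^{(\ell)}_{11}]_0 = 0$.

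I expect the main obstacle to be the bookkeeping in the inductive step: verifying that the reindexed family $\{e^{(\ell+1)}_{pq}\}$ genuinely satisfies the matrix-unit relations and realizes $\mathcal{M}_{2^\ell}(\mathbb{C})$ as the \emph{standard} unital subalgebra of $\mathcal{M}_{2^{\ell+1}}(\mathbb{C})$, so that the limit is the $2^\infty$-UHF algebra rather than some other AF algebra. The conceptually essential point, by contrast, is elementary once isolated: one can keep every minimal projection's $K_0$-class equal to $0$ throughout, which rests entirely on the splitting lemma and hence on the initial observation that $[E_0]_0 = 0$.
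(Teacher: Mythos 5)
Your proposal is correct and follows essentially the same route as the paper: both start from $[I_\mathfrak{A}-P]_0=0$, split $I_\mathfrak{A}-P$ into two non-zero, Murray--von Neumann equivalent subprojections of trivial $K_0$-class, build the nested unital copies of $\mathcal{M}_{2^\ell}(\mathbb{C})$ inside the corner from the resulting partial isometries, and handle an arbitrary projection of $\mathfrak{B}$ by a $\frac{1}{2}$-approximation from some finite stage together with Lemma \ref{simorbitofprojections}. The only organizational difference is that the paper generates all levels at once from two isometries $V_1,V_2$ of $P_0\mathfrak{A}P_0$ onto the two halves (the matrix units being the words $V_{i_1}\cdots V_{i_\ell}V_{j_\ell}^*\cdots V_{j_1}^*$), whereas you rebuild each level by re-splitting the minimal projection.
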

\begin{proof}
Let $P_0 := I_\mathfrak{A} - P$.  Since $\mathfrak{A}$ is a unital, simple, purely infinite C$^*$-algebra, there exists a projection $P_1 \in \mathfrak{A}$ such that $P_0$ and $P_1$ are Murray-von Neumann equivalent and $0 < P_1 < P_0$.  Let $P_2 := P_0 - P_1$ which is a non-trivial projection.  Note $[P_0]_0 = 0$ in $\mathfrak{A}$ by {\cite[Theorem 1.4]{Cu}}.  Hence
\[
[P_1]_0 = [P_0]_0 = 0 = [P_1 + P_2]_0 = [P_1]_0 + [P_2]_0 = [P_2]_0.
\]
Thus $P_1$ and $P_2$ are Murray-von Neumann equivalent in $\mathfrak{A}$ by {\cite[Theorem 1.4]{Cu}}.  Thus, since $P_1, P_2 \leq P_0$, $P_1$ and $P_2$ are Murray-von Neumann equivalent in $P_0\mathfrak{A}P_0$.
\par
For $q \in \{1,2\}$ let $V_q \in P_0\mathfrak{A}P_0$ be an isometry such that $V_qV_q^* = P_q$.  Then it is not difficult to see for each $\ell \in \mathbb{N}$ that
\[
\mathfrak{B}_\ell := *\mbox{-alg}\left(\{V_{i_1}V_{i_2}\cdots V_{i_\ell}V_{j_\ell}^* \cdots V_{j_2}^* V_{j_1}^* \, \mid \, i_1, i_2, \ldots, i_\ell, j_1, j_2\ldots, j_\ell \in \{1,2\}\}\right)
\]
is a C$^*$-subalgebra of $P_0\mathfrak{A}P_0$ containing $P_0$ that is isomorphic to $\mathcal{M}_{2^\ell}(\mathbb{C})$.  Moreover, it is clear that $\mathfrak{B}_\ell \subseteq \mathfrak{B}_{\ell+1}$ for all $\ell \in \mathbb{N}$ and
\[
\{V_{i_1}V_{i_2}\cdots V_{i_\ell}V_{j_\ell}^* \cdots V_{j_2}^* V_{j_1}^* \, \mid \, i_1, i_2, \ldots, i_\ell, j_1, j_2\ldots, j_\ell \in \{1,2\}\}
\]
are matrix units for $\mathfrak{B}_\ell$ in such a way that $\mathfrak{B} := \overline{\bigcup_{\ell\geq 1} \mathfrak{B}_\ell}$ is the $2^\infty$-UHF C$^*$-algebra.  Notice every rank one projection in $\mathfrak{B}_\ell$ is Murray-von Neumann equivalent in $\mathfrak{B}_\ell$ (and thus in $P_0\mathfrak{A}P_0$) to the rank one matrix unit $(V_1)^\ell(V_1^*)^\ell$ which is Murray-von Neumann equivalent in $\mathfrak{A}$ to $P_0$.  Therefore $[Q]_0 = [P_0]_0 = 0$ in $\mathfrak{A}$ for every rank one projection $Q \in \mathfrak{B}_\ell$.  Hence $[Q]_0 = 0$ in $\mathfrak{A}$ for every non-zero projection $Q \in \mathfrak{B}_\ell$.  However, if $Q \in \mathfrak{B}$ is a non-zero projection, it is easy to see that there exists an $\ell \in \mathbb{N}$ and a non-zero projection $Q_0 \in \mathfrak{B}_\ell$ such that $\left\|Q - Q_0\right\| < \frac{1}{2}$.  Hence $Q$ and $Q_0$ are Murray-von Neumann equivalent in $\mathfrak{A}$ by Lemma \ref{simorbitofprojections}.  Thus $[Q]_0 = [Q_0]_0 = 0$ as desired.
\end{proof}
We will need the following two results to generalize {\cite[Theorem 1.4]{Da1}} to our desired context.
\begin{lem}
\label{UHFanyspectrum}
Let $\mathfrak{B} := \overline{\bigcup_{\ell\geq 1} \mathcal{M}_{2^\ell}(\mathbb{C})}$ be the $2^\infty$-UHF C$^*$-algebra.  If $X \subseteq \mathbb{C}$ is compact, there exists a normal operator $N \in \mathfrak{B}$ such that $\sigma(N) = X$.
\end{lem}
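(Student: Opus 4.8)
The plan is to realise the desired normal operator as the image of a single continuous function under a canonical embedding of a commutative C$^*$-algebra into $\mathfrak{B}$. The two facts I would combine are: (i) $\mathfrak{B}$ contains a unital copy of $C(K)$, where $K$ denotes the Cantor set; and (ii) the classical Hausdorff--Alexandroff theorem, which guarantees that every non-empty compact metric space is a continuous image of $K$. (One must assume $X \neq \emptyset$, since the spectrum of any element of a unital C$^*$-algebra is non-empty, so $X = \emptyset$ cannot occur as a spectrum.)

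First I would construct the embedding $C(K) \hookrightarrow \mathfrak{B}$. Using the standard unital inclusions $\mathcal{M}_{2^\ell}(\mathbb{C}) \hookrightarrow \mathcal{M}_{2^{\ell+1}}(\mathbb{C})$ that realise $\mathfrak{B}$ (for instance the matrix-unit system of Lemma \ref{UHFdirectsum}, under which each minimal projection of $\mathfrak{B}_\ell$ splits into two minimal projections of $\mathfrak{B}_{\ell+1}$), the diagonal subalgebra $D_\ell \cong \mathbb{C}^{2^\ell}$ of $\mathfrak{B}_\ell$ satisfies $D_\ell \subseteq D_{\ell+1}$. Identifying $D_\ell$ with $C(\{0,1\}^\ell)$, the inclusion $D_\ell \hookrightarrow D_{\ell+1}$ is the pullback along the coordinate projection $\{0,1\}^{\ell+1} \to \{0,1\}^\ell$. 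Hence $\mathfrak{D} := \overline{\bigcup_{\ell\geq 1} D_\ell}$ is a unital commutative C$^*$-subalgebra of $\mathfrak{B}$ whose Gelfand spectrum is the inverse limit $\varprojlim \{0,1\}^\ell = \{0,1\}^{\mathbb{N}} = K$; that is, $\mathfrak{D} \cong C(K)$. Write $\iota : C(K) \to \mathfrak{B}$ for the resulting unital, injective $^*$-homomorphism.

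Next, by the Hausdorff--Alexandroff theorem I would choose a continuous surjection $f : K \to X$. Since $X \subseteq \mathbb{C}$, the map $f$ is a $\mathbb{C}$-valued continuous function on $K$, i.e.\ $f \in C(K)$, and I would set $N := \iota(f) \in \mathfrak{B}$. As $N$ lies in the commutative subalgebra $\iota(C(K))$, it is a normal operator. Finally, because $\iota$ is a unital injective $^*$-homomorphism, spectral permanence for C$^*$-algebras gives $\sigma_{\mathfrak{B}}(N) = \sigma_{C(K)}(f)$, and the spectrum of $f$ in $C(K)$ is precisely its range $f(K)$. Since $f$ is surjective and $K$ is compact, $f(K) = X$, so $\sigma(N) = X$, as required.

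The routine verifications here are genuinely routine; the one place needing care, and the main conceptual step, is the identification $\mathfrak{D} \cong C(K)$, namely checking that the nested diagonals of the matrix blocks assemble into the algebra of continuous functions on the Cantor set (equivalently, that the bonding maps of the associated inverse system of finite sets are the full coordinate projections). Everything else, including spectral permanence and the range computation $\sigma_{C(K)}(f) = f(K)$, is standard. I would regard the Hausdorff--Alexandroff surjection as the key external input. An alternative, more self-contained route avoiding it would build $N$ directly as a norm limit of diagonal operators $N_\ell \in \mathfrak{B}_\ell$ whose finite spectra form nets of $X$ of mesh tending to $0$ while keeping $\|N_{\ell+1} - N_\ell\|$ summable; but that approach requires a fussy matching argument to distribute the net points among the splitting projections and is far less transparent than the Cantor-set embedding.
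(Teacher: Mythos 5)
Your proof is correct, but it follows a genuinely different route from the paper's. The paper argues by hand: it picks $\frac{1}{2^n}$-nets $X_n$ of $X$, realizes each $\bigcup_{j\leq n}X_j$ as the spectrum of a diagonal matrix $N_n\in\mathcal{M}_{2^{\ell_n}}(\mathbb{C})$, and exploits the multiplicity gained by passing to larger matrix blocks to make $(N_n)_{n\geq1}$ Cauchy; the limit $N$ then has $\sigma(N)\supseteq\overline{\bigcup_j X_j}=X$, and the reverse inclusion follows from semicontinuity of the spectrum (Lemma~\ref{densespectra2}). This is exactly the ``more self-contained'' alternative you sketch and dismiss at the end, fussy matching included. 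Your route instead identifies the nested diagonal subalgebras with $C(K)$, $K$ the Cantor set, and applies the Hausdorff--Alexandroff theorem to produce a continuous surjection $f:K\to X$, so that $N:=\iota(f)$ does the job by spectral permanence. That argument is cleaner and shorter, trades the approximation bookkeeping for one external classical input, and yields slightly more (e.g.\ it shows every separable unital commutative C$^*$-algebra embeds unitally into $\mathfrak{B}$, hence one can even prescribe joint spectra of commuting families); the paper's argument is more elementary and stays entirely within the finite-dimensional approximation framework used throughout Section~\ref{sec:DISTANCEUNITARYORBITS}. Your parenthetical remark that $X$ must be nonempty is a point the paper leaves implicit, and is worth keeping.
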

\begin{proof}
For each $n \in \mathbb{N}$ there exists a $\frac{1}{2^n}$-net $X_n = \{x_{j,n}\}^{k_n}_{j=1} \subseteq X$ of $X$.  By choosing $\ell_1$ such that $k_1 \leq 2^{\ell_1}$, there exists a normal operator $N_1 \in\mathcal{M}_{2^{\ell_1}}(\mathbb{C})$ such that $\sigma(N_1) = X_1$.  Since $X_1$ is a 1-net for $X$, it is easy to see that by choosing $\ell_2 > \ell_1$ sufficiently large there exists a normal operator $N_2 \in \mathcal{M}_{2^{\ell_2}}(\mathbb{C})$ such that $\sigma(N_2) = X_1 \cup X_2$ and the distance between $N_2$ and the canonical embedding of $N_1$ into $\mathcal{M}_{2^{\ell_2}}(\mathbb{C})$ is at most 1 (that is, by embedding $N_1$ into a large $\mathcal{M}_{2^\ell}(\mathbb{C})$ we can ensure that each element of $\sigma(N_1)$ in the spectrum of the embedded matrix has large enough multiplicity in order to approximate the elements of $X_1 \cup X_2$ separately).  By repeating this procedure, we obtain a strictly increasing sequence $(\ell_n)_{n\geq1}$ of natural numbers and normal operators $N_n \in \mathcal{M}_{2^{\ell_n}}(\mathbb{C})$ such that $\sigma(N_n) = \bigcup^n_{j=1} X_j$ and $(N_n)_{n\geq 1}$ is a Cauchy sequence in $\mathfrak{B}$.
\par
Let $N := \lim_{n\to\infty} N_n$.  Since $X_j \subseteq \sigma(N_n)$ for all $n\geq j$, $X_j \subseteq \sigma(N)$.  Therefore, as $\bigcup_{j\geq 1} X_j$ is dense in $X$, $X \subseteq \sigma(N)$.  The inclusion $\sigma(N) \subseteq X$ follows trivially from Lemma \ref{densespectra2}.
\end{proof}
\begin{lem}[see {\cite[Lemma 1.2]{Cu}}]
\label{connectedcomponenttechnicallity}
Let $\mathfrak{A}$ be a unital, simple, purely infinite C$^*$-algebra, let $V \in \mathfrak{A}$ be an isometry, and let $U \in \mathfrak{A}$ be a unitary.  Then $[U]_1 = [VUV^* + (I_\mathfrak{A} - VV^*)]_1$.
\end{lem}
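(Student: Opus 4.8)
The plan is to realize the passage from $U$ to $W := VUV^* + (I_\mathfrak{A} - VV^*)$ as conjugation by a single unitary inside $M_2(\mathfrak{A})$, so that the invariance of $[\,\cdot\,]_1$ under unitary conjugation gives the claim immediately. Write $P := VV^*$, a projection, and recall $V^*V = I_\mathfrak{A}$, together with the partial-isometry identities $(I_\mathfrak{A}-P)V = V - VV^*V = 0$ and $V^*(I_\mathfrak{A}-P) = V^* - V^*VV^* = 0$. A one-line computation from these identities shows first that $W$ is indeed a unitary in $\mathfrak{A}$ (the cross terms $VUV^*(I_\mathfrak{A}-P)$ and $(I_\mathfrak{A}-P)VUV^*$ vanish, so $WW^* = P + (I_\mathfrak{A}-P) = I_\mathfrak{A}$, and symmetrically).

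First I would introduce the matrix $T := \begin{pmatrix} V & I_\mathfrak{A}-P \\ 0 & V^* \end{pmatrix} \in M_2(\mathfrak{A})$, the standard unitary dilation of the isometry $V$, and verify that $T$ is unitary. Using only $V^*V = I_\mathfrak{A}$, $VV^* = P$, $(I_\mathfrak{A}-P)V = 0$, and $V^*(I_\mathfrak{A}-P) = 0$, one checks $TT^* = I_{M_2(\mathfrak{A})}$ (the $(1,1)$ entry is $VV^* + (I_\mathfrak{A}-P)^2 = I_\mathfrak{A}$, the off-diagonal entries collapse to $0$, and the $(2,2)$ entry is $V^*V = I_\mathfrak{A}$) and similarly $T^*T = I_{M_2(\mathfrak{A})}$.

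Next I would compute directly that $T\begin{pmatrix} U & 0 \\ 0 & I_\mathfrak{A}\end{pmatrix}T^* = \begin{pmatrix} W & 0 \\ 0 & I_\mathfrak{A}\end{pmatrix}$, where again the off-diagonal entries are $(I_\mathfrak{A}-P)V = 0$ and $V^*(I_\mathfrak{A}-P) = 0$ and the $(1,1)$ entry is $VUV^* + (I_\mathfrak{A}-P)^2 = W$. Since conjugation by the unitary $T$ is trivial on $K_1(M_2(\mathfrak{A}))$ and $[\,\mathrm{diag}(A, I_\mathfrak{A})\,]_1 = [A]_1$ under the stabilization isomorphism $K_1(\mathfrak{A}) \cong K_1(M_2(\mathfrak{A}))$, this yields $[W]_1 = [\mathrm{diag}(W,I_\mathfrak{A})]_1 = [T\,\mathrm{diag}(U,I_\mathfrak{A})\,T^*]_1 = [\mathrm{diag}(U,I_\mathfrak{A})]_1 = [U]_1$, which is the assertion.

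There is no genuine obstacle in this argument; the only point to get right is the choice of dilation, since the naive $\mathrm{diag}(V,V^*)$ is merely a partial isometry and one must insert $I_\mathfrak{A}-P$ in the corner to absorb the defect $I_\mathfrak{A}-VV^*$. I should note that the computation uses neither pure infiniteness nor simplicity, so the identity in fact holds in an arbitrary unital C$^*$-algebra; those hypotheses are relevant only if one prefers to read the conclusion through the Cuntz identification $K_1(\mathfrak{A}) \cong \mathfrak{A}^{-1}/\mathfrak{A}^{-1}_0$ of {\cite[Theorem 1.9]{Cu}}, in which case the same $T$ places $W$ and $U$ in the same connected component after the block computation. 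An alternative, if one wishes to avoid $M_2(\mathfrak{A})$ entirely, is to observe that $U \mapsto VUV^* + (I_\mathfrak{A}-P)$ is a norm-continuous group homomorphism $\mathcal{U}(\mathfrak{A}) \to \mathcal{U}(\mathfrak{A})$ and to identify its effect on $K_1$ with the corner isomorphism $\mathfrak{A} \cong P\mathfrak{A}P$ induced by $V$ followed by the inclusion $P\mathfrak{A}P \hookrightarrow \mathfrak{A}$; but the explicit dilation is shorter and more transparent.
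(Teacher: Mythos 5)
Your argument is correct, and every computation checks out: with $P=VV^*$ the identities $V^*V=I_{\mathfrak{A}}$, $(I_{\mathfrak{A}}-P)V=0$, $V^*(I_{\mathfrak{A}}-P)=0$ do give that $T$ is unitary in $M_2(\mathfrak{A})$ and that $T\,\mathrm{diag}(U,I_{\mathfrak{A}})\,T^*=\mathrm{diag}(W,I_{\mathfrak{A}})$, after which conjugation-invariance of $K_1$ classes and $[\mathrm{diag}(A,I_{\mathfrak{A}})]_1=[A]_1$ finish the proof. Note that the paper supplies no proof of its own here: the lemma is stated with a pointer to {\cite[Lemma 1.2]{Cu}}, whose argument runs through a homotopy in the unitary group rather than through an explicit $2\times 2$ unitary dilation, so your write-up is a genuinely self-contained alternative. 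What the dilation buys is exactly what you say: the identity $[U]_1=[VUV^*+(I_{\mathfrak{A}}-VV^*)]_1$ holds in any unital C$^*$-algebra containing an isometry, with simplicity and pure infiniteness entering only when one wants to read the conclusion at the level of connected components of $\mathfrak{A}^{-1}$ via $K_1(\mathfrak{A})\cong\mathfrak{A}^{-1}/\mathfrak{A}^{-1}_0$ from {\cite[Theorem 1.9]{Cu}} --- and that last step is in fact how the lemma is used in the proof of Theorem \ref{nonzeroindexdistance}, so it is worth keeping that remark attached. The only point I would tighten is the phrase ``conjugation by the unitary $T$ is trivial on $K_1(M_2(\mathfrak{A}))$'': you should say explicitly that this is because $[SXS^*]_1=[S]_1+[X]_1-[S]_1=[X]_1$ in the abelian group $K_1$, since that is the one place where the group structure, rather than a homotopy, is doing the work.
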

\begin{thm}
\label{nonzeroindexdistance}
Let $\mathfrak{A}$ be a unital, simple, purely infinite C$^*$-algebra and let $N,M \in \mathfrak{A}$ be normal operators such that
\begin{enumerate}
	\item $\sigma(M) \subseteq \sigma(N)$,
	\item $\sigma(M)$ intersects every connected component of $\sigma(N)$,
	\item $\Gamma(M)(\lambda) = \Gamma(N)(\lambda)$ for all $\lambda \notin \sigma(N)$, and
	\item $N$ and $M$ have equivalent common spectral projections.
\end{enumerate}
Then
\[
dist(N, M) = d_H(\sigma(N), \sigma(M)).
\]
\end{thm}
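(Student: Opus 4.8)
The plan is to read $dist(N,M)$ as the orbit distance $dist(\mathcal{U}(N),\mathcal{U}(M))$ studied throughout this section, and to prove the two inequalities $dist(\mathcal{U}(N),\mathcal{U}(M)) \geq d_H(\sigma(N),\sigma(M))$ and $dist(\mathcal{U}(N),\mathcal{U}(M)) \leq d_H(\sigma(N),\sigma(M))$ separately. For the lower bound I would simply invoke Proposition \ref{lowerbounddistantofunitaryorbits}, which gives $dist(\mathcal{U}(N),\mathcal{U}(M)) \geq \rho(N,M)$, and then check that hypotheses (1) and (3) force $\rho(N,M) = d_H(\sigma(N),\sigma(M))$. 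Indeed, since $\sigma(M) \subseteq \sigma(N)$ we have $\sigma(N) \cup \sigma(M) = \sigma(N)$, so every $\lambda \notin \sigma(N) \cup \sigma(M)$ lies outside $\sigma(N)$ and therefore satisfies $\Gamma(N)(\lambda) = \Gamma(M)(\lambda)$ by (3). Hence the index supremum in Definition \ref{rho} is taken over the empty set and $\rho(N,M)$ collapses to $d_H(\sigma(N),\sigma(M))$.

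The upper bound is the substantial half. Fix $\epsilon > 0$, set $d := d_H(\sigma(N),\sigma(M))$, and aim for a unitary $U$ with $\|N - UMU^*\| \leq d + \epsilon$. Following the suggested adaptation of {\cite[Theorem 1.4]{Da1}} via Theorem \ref{dadaresult}, I would first absorb trivial-index reservoirs. Using Lemma \ref{UHFdirectsum}, fix a non-unitary isometry $V$, set $P := VV^*$, and embed the $2^\infty$-UHF algebra $\mathfrak{B}$ into $(I_\mathfrak{A} - P)\mathfrak{A}(I_\mathfrak{A} - P)$ so that every projection of $\mathfrak{B}$ is trivial in $K_0(\mathfrak{A})$. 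By Lemma \ref{UHFanyspectrum} choose normal operators $N_0, M_0 \in \mathfrak{B}$ with spectra inside $\sigma(N)$, selected to bridge the components of $\sigma(M)$ and $\sigma(N)$ (for instance $\sigma(N_0) = \sigma(M_0) = \sigma(M)$, where hypothesis (2) guarantees every component of $\sigma(N)$ can be reached). Since $\mathfrak{B}$ has real rank zero and its projections are $K_0$-trivial, approximating $N_0, M_0$ by finite-spectrum normals shows their index functions in $\mathfrak{A}$ are trivial and their spectral-projection $K_0$-data vanish, while Lemma \ref{connectedcomponenttechnicallity} identifies the $K_1$-class of the block $V(\lambda I_\mathfrak{A} - N)V^* + (I_\mathfrak{A} - P)$ with $\Gamma(N)(\lambda)$. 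Thus $VNV^* + N_0$ and $VMV^* + M_0$ carry exactly the spectrum, index function, and spectral-projection classes of $N$ and $M$, so Theorem \ref{dadaresult} yields $N \sim_{au} VNV^* + N_0$ and $M \sim_{au} VMV^* + M_0$. As orbit distance is invariant under approximate unitary equivalence, it suffices to bound $dist(\mathcal{U}(VNV^* + N_0), \mathcal{U}(VMV^* + M_0))$.

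To construct the conjugating unitary I would run the box-grid, bipartite-graph ``back and forth'' of Lemma \ref{hausdistconnspec} and Proposition \ref{unionwithinepsilondistanceresult} on the four blocks simultaneously. The point of the reservoirs is that the index-carrying blocks $VNV^*$ and $VMV^*$ cannot be approximated by finite-spectrum normals (Theorem \ref{linfa}), so the earlier reduction to finite spectra is unavailable for them; my plan is to leave these blocks intact and to displace them only by functional calculus, choosing continuous maps that realize the Hausdorff-distance move (as in the connected case of Lemma \ref{hausdistconnspec}, where an annulus-type spectrum retracts onto a subspectrum within distance $d$). The trivial-index reservoirs $N_0, M_0$ then supply the properly infinite, mutually Murray-von Neumann equivalent projections needed to build the intertwining partial isometries and to glue distinct spectral components together. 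Here hypothesis (3) guarantees the index data along the bounded complementary components match, so the reshaping maps induce compatible $K_1$-classes and Theorem \ref{dadaresult} certifies the target equivalence; hypothesis (4) guarantees the component-wise spectral projections are Murray-von Neumann equivalent, so the final ``leftover projection'' step of the back and forth closes exactly as in Theorem \ref{ausiuspi}.

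I expect the main obstacle to be precisely the coexistence of a nontrivial index with the geometric constraint: one must verify that the index carried by $VMV^*$ can be placed within distance $d + \epsilon$ of its target without violating the $K$-theoretic bookkeeping, and that the global index and spectral-projection constraints close up after all components are matched. This is where Theorem \ref{dadaresult} and the UHF reservoirs do the work that Theorem \ref{linfa} did in the trivial-index setting. Assembling the blocks should then give a unitary $U$ with $\|N - UMU^*\| \leq d + (4 + 2\sqrt{2})\epsilon$, and letting $\epsilon \to 0$ completes the proof.
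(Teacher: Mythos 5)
Your lower bound and your verification that $N \sim_{au} VNV^* + N_0$ and $M \sim_{au} VMV^* + M_0$ are fine, but the reduction you end up with is not the one the paper uses, and the step you leave for last is exactly where your argument breaks. After your reduction you must bound the distance between $VNV^* + N_0$ and $VMV^* + M_0$, whose $P$-corners $VNV^*$ and $VMV^*$ are \emph{different} operators both carrying the nontrivial index data; conjugation by a unitary of the form $P + U$ with $U \in \mathfrak{C}$ does not touch them, and as you note Theorem \ref{linfa} forbids approximating them by finite-spectrum normals, so the back-and-forth machinery of Lemma \ref{hausdistconnspec} and Proposition \ref{unionwithinepsilondistanceresult} cannot be run on those blocks. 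Your proposed remedy --- displace $VNV^*$ onto $VMV^*$ ``by functional calculus, choosing continuous maps that realize the Hausdorff-distance move'' --- does not work: a continuous $f$ on $\sigma(N)$ with range in $\sigma(M)$ and $|f(z)-z| \leq d_H(\sigma(N),\sigma(M))$ need not exist (take $\sigma(N) = [0,1]$ and $\sigma(M) = \{0,\tfrac12,1\}$, where any continuous map into $\sigma(M)$ is constant on $[0,1]$), and even when it exists, $f(VNV^*) = Vf(N)V^*$ is merely another normal operator, not something unitarily conjugate to $VMV^*$ without a further appeal to classification. This is a genuine gap, not a detail.

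The paper's resolution is a single sharper choice that makes the problem disappear: put $VMV^*$ in the $P$-corner of \emph{both} auxiliary operators, i.e.\ set $N' := VMV^* + N_0$ with $\sigma(N_0) = \sigma(N)$ and $M' := VMV^* + M_0$ with $\sigma(M_0) = \sigma(M)$. Hypotheses (1), (3) and (4) are precisely what is needed for Theorem \ref{dadaresult} to give $N' \sim_{au} N$ even though the corner contains $M$ rather than $N$: the spectrum of $N'$ is $\sigma(M) \cup \sigma(N_0) = \sigma(N)$ by (1), the index function of $N'$ agrees with $\Gamma(N)$ by (3) together with Lemma \ref{connectedcomponenttechnicallity} and the $K_1$-triviality of $\mathfrak{B}$, and the spectral projection classes agree by (4) together with the $K_0$-triviality of projections in $\mathfrak{B}$ from Lemma \ref{UHFdirectsum}. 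Now the identical corners cancel in $\left\|(P+U)N'(P+U)^* - M'\right\| = \left\|UN_0U^* - M_0\right\|$, and the entire estimate takes place in the UHF reservoir, where $N_0$ and $M_0$ have trivial index functions and (using (2) to match components) Corollary \ref{hausdorffdistanceforsimilarity} produces $U$ with $\left\|UN_0U^* - M_0\right\| \leq \epsilon + d_H(\sigma(N),\sigma(M))$. No direct manipulation of the index-carrying blocks is ever required.
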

\begin{proof}
One inequality follows from Proposition \ref{lowerbounddistantofunitaryorbits}.  Since $\mathfrak{A}$ is a unital, simple, purely infinite C$^*$-algebra, there exists a non-unitary isometry $V \in \mathfrak{A}$.  Let $P := VV^*$, let $\mathfrak{C}:= (I_\mathfrak{A} - P)\mathfrak{A}(I_\mathfrak{A} - P)$, and let $\mathfrak{B}$ be the unital copy of the $2^\infty$-UHF C$^*$-algebra in $\mathfrak{C}$ given by Lemma \ref{UHFdirectsum}.  By Lemma \ref{UHFanyspectrum} there exists normal operators $N_0, M_0 \in \mathfrak{B}$ such that $\sigma(N_0) = \sigma(N)$ and $\sigma(M_0) = \sigma(M)$.  
\par
Let $N' := VMV^* + N_0$ and let $M' := VMV^* + M_0$ which are clearly normal operators as $V$ is an isometry.  We will demonstrate that $N' \in \overline{\mathcal{U}(N)}$ and $M' \in \overline{\mathcal{U}(M)}$ by appealing to {\cite[Theorem 1.7]{Dad}}.  Notice that $\sigma(N') = \sigma(M) \cup \sigma(N_0) = \sigma(N)$ as $V$ is an isometry.  Furthermore if $f : \mathbb{C} \to \mathbb{C}$ is a function that is analytic on an open neighbourhood $U$ of $\sigma(N)$ with $f(U) \subseteq \{0,1\}$ then
\[
f(N') = f(VMV^*) + f(N_0) = Vf(M)V^* + f(N_0).
\]
If $f(M) = 0$ then $f(N) = 0$ as $f(M)$ and $f(N)$ are Murray-von Neumann equivalent.  This implies $f$ is zero on $\sigma(N)$ and thus $f(N') = f(N_0) = 0 = f(N)$.  If $f(M) \neq 0$ then $f(N') \neq 0$ and
\[
[f(N')]_0 = [Vf(M)V^*]_0 + [f(N_0)]_0 = [f(M)]_0 = [f(N)]_0
\]
as $f(N_0) \in \mathfrak{B}$ and as every projection in $\mathfrak{B}$ is trivial in the $K_0$-group of $\mathfrak{A}$ by Lemma \ref{UHFdirectsum}.  In any case $f(N')$ and $f(N)$ are Murray-von Neumann equivalent.  Furthermore, since $\mathfrak{B}^{-1}_0 = \mathfrak{B}^{-1}$ as $\mathfrak{B}$ is a UHF C$^*$-algebra, we notice for any $\lambda \notin \sigma(N)$ that $\lambda I_\mathfrak{A} - N'$ is in the same component of $\mathfrak{A}^{-1}$ as
\[
V(\lambda I_\mathfrak{A} - M)V^* + (\lambda I_\mathfrak{A} - P)
\]
which is in the same connected component of $\mathfrak{A}^{-1}$ as $\lambda I_\mathfrak{A} - M$ by Lemma \ref{connectedcomponenttechnicallity}.  Therefore, since $\Gamma(M)(\lambda) = \Gamma(N)(\lambda)$ for all $\lambda \notin \sigma(N)$ by assumption, we obtain that $\Gamma(N') = \Gamma(N)$.  Therefore $N$ and $N'$ are approximately unitarily equivalent in $\mathfrak{A}$ by {\cite[Theorem 1.7]{Dad}}.  Similarly $M$ and $M'$ are approximately unitarily equivalent in $\mathfrak{A}$ by {\cite[Theorem 1.7]{Dad}}.
\par
Hence it is easy to see for any unitary $U \in \mathfrak{C}$ that
\[
dist(\mathcal{U}(N), \mathcal{U}(M)) \leq \left\|(P + U)N'(P+U)^* - M'\right\| = \left\|UN_0U^* - M_0\right\|.
\]
However, since $\mathfrak{C}$ is a unital, simple, purely infinite C$^*$-algebra and $N_0, M_0 \in \mathfrak{C}$ are in the unital inclusion of the UHF C$^*$-algebra $\mathfrak{B}$ in $\mathfrak{C}$, it is easy to see that $\Gamma(N_0)$ and $\Gamma(M_0)$ are trivial (when viewed as elements of $\mathfrak{C}$).  Moreover, since $\sigma(M)$ intersects every connected component of $\sigma(N)$, since $\sigma(M_0) = \sigma(M) \subseteq \sigma(N) = \sigma(N_0)$, and since any two non-zero projections in $\mathfrak{B} \subseteq \mathfrak{C}$ are Murray-von Neumann equivalent, the hypotheses of Corollary \ref{hausdorffdistanceforsimilarity} are satisfied for $N_0$ and $M_0$ in $\mathfrak{C}$.  Hence for any $\epsilon > 0$ there exists a unitary $U \in \mathfrak{C}$ such that 
\[
\left\|UN_0U^* - M_0\right\| \leq \epsilon + d_H(\sigma(N_0), \sigma(M_0)) = \epsilon + d_H(\sigma(N), \sigma(M)).
\]
Hence 
\[
dist(\mathcal{U}(N), \mathcal{U}(M)) \leq d_H(\sigma(N), \sigma(M))
\]
as desired.
\end{proof}
To complete this section we note that the proof of Theorem \ref{nonzeroindexdistance} can be adapted to obtain additional results provided there is a method for matching spectral projections.  In particular {\cite[Theorem 1.4]{Da1}} clearly generalizes to the following results.
\begin{prop}
\label{upgradedav}
Let $\mathfrak{A}$ be a unital, simple, purely infinite C$^*$-algebra with trivial $K_0$-group.  If $N_1, N_2 \in \mathfrak{A}$ are normal operators then 
\[
dist(\mathcal{U}(N_1), \mathcal{U}(N_2)) \leq 2 \rho(N_1, N_2)
\]
where $\rho(N_1, N_2)$ is as defined in Definition \ref{rho}.
\end{prop}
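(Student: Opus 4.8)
The plan is to reduce the problem, which involves operators whose index functions may \emph{disagree}, to the setting of Theorem \ref{nonzeroindexdistance}, which handles operators whose index functions \emph{agree}, by interposing a single intermediate normal operator and applying the triangle inequality for $dist$ on unitary orbits. Write $r := \rho(N_1, N_2)$. The geometric object I would build is a ``filled'' spectrum. Let $G$ be the union of those connected components of $\mathbb{C} \setminus (\sigma(N_1) \cup \sigma(N_2))$ on which $\Gamma(N_1)$ and $\Gamma(N_2)$ agree, including the unbounded component, where both index functions vanish. Setting $Y_0 := \mathbb{C} \setminus G$ produces a compact set containing $\sigma(N_1) \cup \sigma(N_2)$ with the property that $\Gamma(N_1)(\lambda) = \Gamma(N_2)(\lambda)$ for every $\lambda \notin Y_0$, since $Y_0$ is obtained from $\sigma(N_1) \cup \sigma(N_2)$ precisely by filling in the ``bad'' holes where the index functions differ.

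The key quantitative point I would record is that $d_H(Y_0, \sigma(N_q)) \leq r$ for each $q \in \{1,2\}$. As $\sigma(N_q) \subseteq Y_0$, only the one-sided supremum matters, and a point $\lambda \in Y_0$ lies either in $\sigma(N_1) \cup \sigma(N_2)$, hence within $d_H(\sigma(N_1), \sigma(N_2)) \leq r$ of each spectrum, or in a filled bad hole, where the index functions disagree and so by Definition \ref{rho} we have $dist(\lambda, \sigma(N_1)) + dist(\lambda, \sigma(N_2)) \leq r$, forcing each summand to be at most $r$. To secure the spectral hypotheses of Theorem \ref{nonzeroindexdistance}, I would enlarge $Y_0$ to a compact set $Y$ by adjoining, for each component of $Y_0$ that misses $\sigma(N_1)$ or $\sigma(N_2)$, a straight segment of length at most $r$ joining it to the corresponding spectrum; a short estimate shows these segments stay within $r$ of both spectra, so that $d_H(Y, \sigma(N_q)) \leq r$ persists, and any hole they newly enclose is carved from $G$, so that $\Gamma(N_1) = \Gamma(N_2)$ still holds off $Y$. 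I would then realise the intermediate operator via the corner construction: using a non-unitary isometry $V$ with $P := VV^*$ and the UHF copy $\mathfrak{B}$ of Lemma \ref{UHFdirectsum}, take $M := VN_1V^* + N_0$ where $N_0 \in \mathfrak{B}$ is a normal operator with $\sigma(N_0) = Y$ supplied by Lemma \ref{UHFanyspectrum}. Since $\mathfrak{B}$ contributes trivially to both $K$-groups of $\mathfrak{A}$, exactly as in the proof of Theorem \ref{nonzeroindexdistance}, this $M$ satisfies $\sigma(M) = Y$ and $\Gamma(M)(\lambda) = \Gamma(N_1)(\lambda) = \Gamma(N_2)(\lambda)$ for every $\lambda \notin Y$.

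Finally I would apply Theorem \ref{nonzeroindexdistance} to each pair $(M, N_1)$ and $(M, N_2)$. Conditions (1) and (3) hold by construction, while conditions (2) and (4) both come down to the requirement that every connected component of $Y$ meet $\sigma(N_1)$ and $\sigma(N_2)$: condition (2) is this statement verbatim, and condition (4) reduces to it because $K_0(\mathfrak{A}) = \{0\}$ makes any two non-zero projections Murray-von Neumann equivalent, so the only way the equivalence of common spectral projections can fail is through a clopen piece of $Y$ missed by one spectrum, which the connecting segments were introduced to rule out. Theorem \ref{nonzeroindexdistance} then gives $dist(\mathcal{U}(M), \mathcal{U}(N_q)) = d_H(Y, \sigma(N_q)) \leq r$ for $q \in \{1,2\}$, and the triangle inequality $dist(\mathcal{U}(N_1), \mathcal{U}(N_2)) \leq dist(\mathcal{U}(N_1), \mathcal{U}(M)) + dist(\mathcal{U}(M), \mathcal{U}(N_2))$, immediate from the definition of $dist$ on orbits, yields $dist(\mathcal{U}(N_1), \mathcal{U}(N_2)) \leq 2r = 2\rho(N_1, N_2)$. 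I expect the main obstacle to be the geometric and $K$-theoretic bookkeeping of the set $Y$: filling the bad holes and connecting components while simultaneously keeping $Y$ within $r$ of both spectra and certifying that no newly created hole carries disagreeing index. This is where the precise form of the second term of $\rho$ together with the triviality of $K_0(\mathfrak{A})$ is essential, and it is also the source of the factor $2$, which enters through the two legs of the triangle inequality rather than through any single estimate.
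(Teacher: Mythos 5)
Your proposal is correct and follows essentially the same route as the paper: both interpose a normal operator $M := VN_1V^* + N_0$ (with $N_0$ in the UHF copy of Lemma \ref{UHFdirectsum} supplied by Lemma \ref{UHFanyspectrum}) whose spectrum is $\sigma(N_1)\cup\sigma(N_2)$ together with the holes on which the index functions disagree, bound each leg $dist(\mathcal{U}(N_q),\mathcal{U}(M))$ by $\rho(N_1,N_2)$ using Theorem \ref{nonzeroindexdistance} (the paper reruns its internal argument via Corollary \ref{hausdorffdistanceforsimilarity} in the corner rather than citing it as a black box), and obtain the factor $2$ from the triangle inequality. Your additional step of adjoining short segments so that every component of the filled spectrum meets both $\sigma(N_1)$ and $\sigma(N_2)$ is a sensible refinement: it certifies the component/spectral-projection hypotheses that the paper's proof leaves implicit (e.g.\ when a clopen piece of the filled set misses one of the two spectra), and your estimate that such segments stay within $\rho(N_1,N_2)$ of both spectra is correct.
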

\begin{proof}
Since $\mathfrak{A}$ is a unital, simple, purely infinite C$^*$-algebra, there exists a non-unitary isometry $V \in \mathfrak{A}$.  Let $P := VV^*$, let $\mathfrak{C}:= (I_\mathfrak{A} - P)\mathfrak{A}(I_\mathfrak{A} - P)$, and let $\mathfrak{B}$ be the unital copy of the $2^\infty$-UHF C$^*$-algebra in $\mathfrak{C}$ given by Lemma \ref{UHFdirectsum}.  
\par
Let $X$ be the compact set that is the union of $\sigma(N_1)$, $\sigma(N_2)$, and 
\[
\{\lambda \in \mathbb{C} \, \mid \, \lambda \notin \sigma(N_1) \cup \sigma(N_2), \Gamma(N_1)(\lambda) \neq \Gamma(N_2)(\lambda)\}.
\]
By Lemma \ref{UHFanyspectrum} there exists a normal operator $N' \in \mathfrak{B}$ such that $\sigma(N') = X$.  Therefore, if
\[
M := VN_1V^* + N'
\]
then $M$ is a normal operator in $\mathfrak{A}$ such that $\sigma(M) = X$ and $\Gamma(M)(\lambda) = \Gamma(N_1)(\lambda) = \Gamma(N_2)(\lambda)$ for all $\lambda \notin X$.  Therefore it suffices to show for any $q \in \{1,2\}$ that 
\[
dist(\mathcal{U}(N_q), \mathcal{U}(M)) \leq \rho(N_1, N_2).
\]
\par
By the definition of $\rho$ we see that
\[
\rho(N_q, M) = d_H(\sigma(N_q), \sigma(M)) \leq \rho(N_1, N_2).
\]
Furthermore, by applying Lemma \ref{UHFanyspectrum}, there exists normal operators $N_0, M_0 \in \mathfrak{B}$ such that $\sigma(N_0) = \sigma(N_q)$ and $\sigma(M_0) = \sigma(M)$.  As in the proof of Theorem \ref{nonzeroindexdistance}, we see that $VN_qV^* + N_0 \in \overline{\mathcal{U}(N_q)}$ and $VN_qV^* + M_0 \in \overline{\mathcal{U}(M)}$.  Hence it is easy to see that for any unitary $U \in \mathfrak{C}$ that
\[
\begin{array}{rcl}
dist(\mathcal{U}(N_q), \mathcal{U}(M)) &\leq&  \left\|(P + U)(VN_qV^* + N_0)(P+U)^* - (VN_qV^* + M_0)\right\| \\
 &=&  \left\|UN_0U^* - M_0\right\|.
 \end{array} 
\]
Thus, as in the proof of Theorem \ref{nonzeroindexdistance}, for any $\epsilon > 0$ there exists a $U \in \mathfrak{C}$ such that 
\[
\left\|UN_0U^* - M_0\right\| \leq \epsilon + d_H(\sigma(N_1), \sigma(M)) \leq \epsilon + \rho(N_1, N_2).
\]
Hence the result follows.
\end{proof}
\begin{prop}
Let $\mathfrak{A}$ be a unital, simple, purely infinite C$^*$-algebra.  If $N_1, N_2 \in \mathfrak{A}$ are normal operators with common spectral projections then 
\[
dist(\mathcal{U}(N_1), \mathcal{U}(N_2)) \leq 2 \rho(N_1, N_2).
\]
\end{prop}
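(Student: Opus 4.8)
The plan is to mimic the proof of Proposition \ref{upgradedav}, routing the comparison of $N_1$ and $N_2$ through a single intermediate normal operator whose spectrum absorbs all the index-theoretic data, and to extract the factor $2$ from the triangle inequality for $dist(\mathcal{U}(\cdot),\mathcal{U}(\cdot))$, which is a pseudometric on unitary orbits. Fix a non-unitary isometry $V \in \mathfrak{A}$, set $P := VV^*$, $\mathfrak{C} := (I_\mathfrak{A} - P)\mathfrak{A}(I_\mathfrak{A}-P)$, and let $\mathfrak{B}$ be the unital copy of the $2^\infty$-UHF algebra inside $\mathfrak{C}$ furnished by Lemma \ref{UHFdirectsum}. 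Let $X$ be the compact set $\sigma(N_1)\cup\sigma(N_2)\cup\{\lambda \mid \lambda \notin \sigma(N_1)\cup\sigma(N_2),\ \Gamma(N_1)(\lambda)\neq\Gamma(N_2)(\lambda)\}$, choose $N'\in\mathfrak{B}$ with $\sigma(N') = X$ by Lemma \ref{UHFanyspectrum}, and put $M := VN_1V^* + N'$. Exactly as in Proposition \ref{upgradedav} one checks that $\sigma(M) = X$ and $\Gamma(M)(\lambda) = \Gamma(N_1)(\lambda) = \Gamma(N_2)(\lambda)$ for all $\lambda\notin X$, so that $\rho(N_q,M) = d_H(\sigma(N_q),X) \le \rho(N_1,N_2)$ for $q\in\{1,2\}$. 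By the triangle inequality it therefore suffices to prove $dist(\mathcal{U}(N_q),\mathcal{U}(M)) \le \rho(N_1,N_2)$ for each $q$.

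Next I would realise both orbits as ``direct sums'' inside $\mathfrak{A}$. Using Lemma \ref{UHFanyspectrum} pick $N_0,M_0\in\mathfrak{B}$ with $\sigma(N_0)=\sigma(N_q)$ and $\sigma(M_0)=X$, and I claim that $VN_qV^*+N_0 \in \overline{\mathcal{U}(N_q)}$ and $VN_qV^*+M_0 \in \overline{\mathcal{U}(M)}$, both by {\cite[Theorem 1.7]{Dad}}. The spectra and index functions match exactly as in the proof of Theorem \ref{nonzeroindexdistance} (here Lemma \ref{connectedcomponenttechnicallity} and $\mathfrak{B}^{-1}=\mathfrak{B}^{-1}_0$ are used), so the only point requiring the hypotheses of the proposition is the agreement of the induced maps on $K_0$. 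Every projection of $\mathfrak{B}$ is trivial in $K_0(\mathfrak{A})$ by Lemma \ref{UHFdirectsum}; hence for a clopen $E\subseteq X$ the spectral projection of $VN_qV^*+M_0$ over $E$ has $K_0$-class $[\chi_{E\cap\sigma(N_q)}(N_q)]_0$, while that of $M$ has class $[\chi_{E\cap\sigma(N_1)}(N_1)]_0$. For $q=1$ these coincide trivially; for $q=2$ they coincide precisely because $N_1$ and $N_2$ have equivalent common spectral projections, since the characteristic function of the relatively clopen set $E\cap(\sigma(N_1)\cup\sigma(N_2))$ extends to a $\{0,1\}$-valued function analytic near $\sigma(N_1)\cup\sigma(N_2)$, forcing $[\chi_{E\cap\sigma(N_1)}(N_1)]_0=[\chi_{E\cap\sigma(N_2)}(N_2)]_0$. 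The computation for $VN_qV^*+N_0\sim_{au}N_q$ is identical and needs no hypothesis.

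Granting the two approximate equivalences, conjugating $VN_qV^*+N_0$ by unitaries of the form $P+W$ with $W\in\mathcal{U}(\mathfrak{C})$ fixes the summand $VN_qV^*$ and gives
\[
dist(\mathcal{U}(N_q),\mathcal{U}(M)) \le \inf_{W\in\mathcal{U}(\mathfrak{C})}\left\|WN_0W^*-M_0\right\| = dist_{\mathfrak{C}}(\mathcal{U}(N_0),\mathcal{U}(M_0)).
\]
It remains to bound the right-hand side by $d_H(\sigma(N_0),\sigma(M_0)) = d_H(\sigma(N_q),X) \le \rho(N_1,N_2)$, and this is the step I expect to be the main obstacle. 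One cannot simply quote Corollary \ref{hausdorffdistanceforsimilarity}: although $\sigma(N_0)\subseteq\sigma(M_0)=X$, the set $X$ may have clopen pieces disjoint from $\sigma(N_q)$ (for instance a component of $\sigma(N_{3-q})$ or of the index-discrepancy region lying far from $\sigma(N_q)$), so $\sigma(N_0)$ need not meet every connected component of $X$ and $N_0,M_0$ need not have equivalent common spectral projections; thus the asymmetric hypothesis of Corollary \ref{hausdorffdistanceforsimilarity} fails. Since $K_0(\mathfrak{A})$ is nontrivial I also cannot invoke Proposition \ref{hausdorffdist} on all of $\mathfrak{C}$.

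The way around this is to re-run the matching argument of Proposition \ref{hausdorffdist} inside $\mathfrak{C}$, restricted to the projections coming from $\mathfrak{B}$. The relevant facts are that $\mathfrak{C}$ is again unital, simple, and purely infinite, that the inclusion $\mathfrak{C}\hookrightarrow\mathfrak{A}$ induces an isomorphism on $K_0$, and hence that every non-zero projection of $\mathfrak{B}$ has $K_0(\mathfrak{C})$-class $0$, is properly infinite in $\mathfrak{C}$, and (by {\cite[Theorem 1.4]{Cu}}) any two such are Murray-von Neumann equivalent in $\mathfrak{C}$. Thus, after approximating $N_0$ and $M_0$ within $\epsilon$ by finite-spectrum normals in $\mathfrak{B}$ whose spectra are $\epsilon$-nets of $\sigma(N_q)$ and of $X$, one splits the spectral projections inside $\mathfrak{C}$ to equalise multiplicities and pairs off eigenvalues lying within $d_H(\sigma(N_0),\sigma(M_0))$ of one another by partial isometries in $\mathfrak{C}$, exactly as in Proposition \ref{hausdorffdist}; the absence of any spectral containment or component-meeting condition is harmless here because the redistribution uses only the uniform Murray-von Neumann equivalence of $\mathfrak{B}$-projections in $\mathfrak{C}$. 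This produces a unitary $W\in\mathfrak{C}$ with $\left\|WN_0W^*-M_0\right\|\le 2\epsilon + d_H(\sigma(N_0),\sigma(M_0))$, whence $dist(\mathcal{U}(N_q),\mathcal{U}(M))\le\rho(N_1,N_2)$, and the triangle inequality yields $dist(\mathcal{U}(N_1),\mathcal{U}(N_2))\le 2\rho(N_1,N_2)$.
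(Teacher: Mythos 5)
Your proposal is correct and follows the paper's route almost verbatim: the paper's proof of this proposition is literally ``rerun Proposition \ref{upgradedav}, noting that the common-spectral-projections hypothesis is what makes $VN_qV^*+M_0\sim_{au}M$ and $VN_qV^*+N_0\sim_{au}N_q$ go through via Dadarlat's theorem,'' and your $K_0$ bookkeeping is exactly the content of that remark. The one place you genuinely diverge is the final estimate on $dist_{\mathfrak{C}}(\mathcal{U}(N_0),\mathcal{U}(M_0))$, and the obstacle you flag there is in fact not present. Because $N_1$ and $N_2$ have equivalent common spectral projections, a clopen subset of $\sigma(N_1)\cup\sigma(N_2)$ meets $\sigma(N_1)$ if and only if it meets $\sigma(N_2)$ (apply the definition to the locally constant extension of its characteristic function), so every connected component of $\sigma(N_1)\cup\sigma(N_2)$ meets both spectra; and the index-discrepancy region is a union of bounded components of the complement of $\sigma(N_1)\cup\sigma(N_2)$, each of whose closures attaches to $\sigma(N_1)\cup\sigma(N_2)$. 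Hence every connected component of $X=\sigma(M_0)$ meets $\sigma(N_q)=\sigma(N_0)$, and since all non-zero projections of $\mathfrak{B}$ are Murray--von Neumann equivalent in $\mathfrak{C}$, the pair $N_0,M_0$ does satisfy the hypotheses of Corollary \ref{hausdorffdistanceforsimilarity} in $\mathfrak{C}$, which is what the paper implicitly invokes (exactly as in Theorem \ref{nonzeroindexdistance}). Your substitute --- redoing the multiplicity-splitting and pairing of Proposition \ref{hausdorffdist} inside $\mathfrak{C}$ using only that every non-zero projection of $\mathfrak{B}$ has class $0$ in $K_0(\mathfrak{C})$ and that class-$0$ projections can be split into class-$0$ pieces --- is also valid, and is arguably more robust since it never needs the component-meeting condition; but it is doing work the hypothesis already hands you.
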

\begin{proof}
The proof of this result follows the proof of Proposition \ref{upgradedav} where we note $N_1$ and $N_2$ having common spectral projections implies that $N_1$ and $M$ have common spectral projections and $N_2$ and $M$ have common spectral projections.  This facilitates the proof that $VN_qV^* + N_0 \in \overline{\mathcal{U}(N_q)}$ and $VN_qV^* + M_0 \in \overline{\mathcal{U}(M)}$ and thus the rest of the proof follows.
\end{proof}

\section{Closed Similarity Orbits of Normal Operators}
\label{sec:CLOSESIMORBITS}

A complete classification of when a normal operator is in the closed similarity orbit of another normal operator in the Calkin algebra was obtained in {\cite[Theorem 2]{AHV}}.  We state this result to facilitate the comparison with the results of this section.
\begin{thm}[{\cite[Theorem 2]{AHV}}, see {\cite[Theorem 9.3]{AFHV} for a proof}]
\label{calkinsim}
Let $N$ and $M$ be normal operators in the Calkin algebra.  Then $N \in \overline{\mathcal{S}(M)}$ if and only if
\begin{enumerate}
	\item $\sigma_e(M) \subseteq \sigma_e(N)$,
	\item each component of $\sigma_e(N)$ intersects $\sigma_e(M)$,
	\item the Fredholm index of $\lambda I - M$ and $\lambda I - N$ agree for all $\lambda \notin \sigma_e(N)$, and
	\item if $\lambda \in \sigma_e(N)$ is not isolated in $\sigma_e(N)$, the component of $\lambda$ in $\sigma_e(N)$ contains some non-isolated point of $\sigma_e(M)$.	
\end{enumerate}
\end{thm}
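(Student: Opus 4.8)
The plan is to work entirely inside the Calkin algebra $\mathcal{C}$, where $\sigma_e$ is the spectrum in $\mathcal{C}$ and the index function $\Gamma$ records the Fredholm index, so that conditions (1)--(3) become statements about $\sigma$ and $\Gamma$ in $\mathcal{C}$. The necessity of (1) and (2) is immediate from the semicontinuity-of-spectrum remarks for closed similarity orbits recorded in Section~\ref{sec:CLOSEDUNITARYORBITS}, and the necessity of (3) is exactly Lemma~\ref{ccoits}(2) once we identify $\Gamma(N)(\lambda)$ with the Fredholm index of $\lambda I - N$ under the isomorphism $K_1(\mathcal{C}) \cong \mathbb{Z}$.

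The interesting necessity is (4), and here the idea is to use the analytic functional calculus to trap a contradiction. Suppose $N \in \overline{\mathcal{S}(M)}$, let $\lambda$ be a non-isolated point of $\sigma_e(N)$ with component $\Sigma$, and suppose toward a contradiction that $\Sigma$ contains no non-isolated point of $\sigma_e(M)$. I would first separate $\Sigma$ from the closed set $D$ of non-isolated points of $\sigma_e(M)$ by a set $W$ that is clopen in $\sigma_e(N)$ with $\Sigma \subseteq W$ and $W \cap D = \emptyset$; this is possible because components of the compact set $\sigma_e(N)$ coincide with quasicomponents, so $\Sigma$ is an intersection of clopen sets and a compactness argument yields such a $W$. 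A further compactness argument shows $W \cap \sigma_e(M)$ is a finite set of points, each isolated in $\sigma_e(M)$. Taking $f$ analytic near $\sigma_e(N)$ with $f = \chi_W$ there, Lemma~\ref{analytic} gives $f(N) \in \overline{\mathcal{S}(f(M))}$, and compressing to the corner cut out by the idempotent $f(N)$ reduces matters to the case where $M$ has finite spectrum $\{\mu_1,\dots,\mu_k\}$ while $\sigma_e(N) \supseteq \Sigma$ still contains a non-isolated point. A final application of Lemma~\ref{analytic} to the indicator of each $\{\mu_j\}$, together with the fact that idempotents are closed under norm limits, forces $\sigma_e(N) \subseteq \{\mu_1,\dots,\mu_k\}$, contradicting the presence of a non-isolated point. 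This establishes (4).

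For sufficiency I would mirror the construction behind Theorem~\ref{nonzeroindexdistance}, importing the reduction from \cite{Sk}. Using a non-unitary isometry and the UHF embedding of Lemma~\ref{UHFdirectsum}, together with Lemma~\ref{UHFanyspectrum} and Lemma~\ref{connectedcomponenttechnicallity}, one can replace $N$ and $M$ by approximately unitarily equivalent models whose index and spectral-projection data are controlled (legitimate since $\overline{\mathcal{U}(M)} \subseteq \overline{\mathcal{S}(M)}$), reducing the global problem to matching spectra component by component via an $\epsilon$-grid exactly as in Lemma~\ref{backandforth} and Proposition~\ref{unionwithinepsilondistanceresult}. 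Conditions (1)--(3) handle any component of $\sigma_e(N)$ that is a single point (it must lie in $\sigma_e(M)$ by (1) and (2), and the index matches by (3)) and supply the $K$-theoretic bookkeeping, so the only genuinely new ingredient is a spreading lemma: if $\mu$ is a non-isolated point of $\sigma_e(M)$ and $\Sigma$ is a connected compact set containing $\mu$, then there is a normal operator with essential spectrum $\Sigma$ lying in $\overline{\mathcal{S}(M)}$. This is precisely where (4) is consumed, since it guarantees that every non-degenerate component of $\sigma_e(N)$ contains such a spreadable $\mu$.

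The main obstacle is this spreading lemma. Modeling $M$ locally as $\mathrm{diag}(\mu_n)$ with $\mu_n \to \mu$, one must build a sequence of invertibles $V_n$ --- not unitaries --- that shear the eigenvectors so as to redistribute the eigenvalues into a dense net of $\Sigma$, and the delicate point is to keep the approximation error small while $\|V_n\|\,\|V_n^{-1}\|$ is allowed to grow; this controlled staircase construction, in the spirit of \cite{AHV} and \cite{He}, is the technical heart that separates closed similarity orbits from closed unitary orbits. Once it is in place, reassembling the component-wise similarities into a single invertible and combining with the unitary-orbit estimates of Corollary~\ref{hausdorffdistanceforsimilarity} completes the argument.
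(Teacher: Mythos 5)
Your architecture matches the paper's: the Calkin algebra is a unital, simple, purely infinite C$^*$-algebra with $K_0 = 0$ and $K_1 \cong \mathbb{Z}$, so Theorem \ref{calkinsim} is exactly the specialization of Theorem \ref{main2} in which condition (5) (equivalent common spectral projections) is automatic and $\Gamma$ is the Fredholm index. Your necessity arguments for (1)--(3) are the paper's, and your argument for (4) is essentially the paper's discussion following Theorem \ref{main}, except that ``compressing to the corner cut out by the idempotent $f(N)$'' is not a licensed operation on closed similarity orbits: you should instead apply Lemma \ref{analytic} to $g(z) = z\chi_W(z)$ directly, obtain $g(N) \in \overline{\mathcal{S}(g(M))}$, note that $g(M)$ has finite spectrum and hence is annihilated by a polynomial $p$, conclude $p(g(N)) = 0$, and contradict the infinitude of $W \cap \sigma_e(N)$.

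The genuine gap is in the sufficiency direction: the ``spreading lemma'' you isolate as the main obstacle is the entire content of the similarity (as opposed to unitary) part of the theorem, and you leave it unproved, gesturing only at a ``controlled staircase construction.'' The paper closes exactly this gap without ever estimating $\|V_n\|\|V_n^{-1}\|$, as follows. Corollary \ref{nbynsim} shows that a diagonal $\sum_j \lambda_j P_j$ with distinct eigenvalues plus a strictly upper triangular perturbation is \emph{exactly} similar to the diagonal. Lemma \ref{directsumnil2} then picks distinct $\mu_q, \dots, \mu_{q+2^\ell-1} \in \sigma(M)$ converging to a cluster point $\mu$, uses {\cite[Theorem 1.7]{Dad}} (or, for the Calkin algebra, {\cite[Theorem 11.1]{BDF}}) to see that $M$ is approximately unitarily equivalent to $VMV^* + \mathrm{diag}(\mu_q,\dots)$, kills an added nilpotent matrix $Q$ by the exact similarity, and lets $q \to \infty$ to conclude $VMV^* + \mu(I_\mathfrak{A} - P) + Q \in \overline{\mathcal{S}(M)}$. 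Replacing $Q$ by a norm limit of nilpotent matrices whose spectrum is a closed $\epsilon$-disk ({\cite[Theorem 6.10]{Sk}}) spreads $\sigma(M)$ by an $\epsilon$-disk centred at $\mu$; iterating along each connected component (Lemma \ref{fillinspec2}) and finishing with the unitary-orbit distance estimate of Theorem \ref{nonzeroindexdistance} (or {\cite[Theorem 1.4]{Da1}} for the Calkin algebra) yields the result. In other words, the staircase you dread is packaged into the quoted normal-limits-of-nilpotents theorem, and the only non-unitary similarities ever needed are the exact finite-dimensional ones of Corollary \ref{nbynsim}; without some such input, your sketch does not constitute a proof.
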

As the Calkin algebra is a unital, simple, purely infinite C$^*$-algebra, in this section we endeavour to use the results of Section \ref{sec:DISTANCEUNITARYORBITS} and results from \cite{Sk} to generalize the above theorem.  In addition, we will obtain a generalization of the above theorem to type III factors with separable predual.  The two main results of this section are similar in proof but pose slight technical differences and thus are listed separately.
\begin{thm}
\label{main2}
Let $\mathfrak{A}$ be a unital, simple, purely infinite C$^*$-algebra and let $N,M \in \mathfrak{A}$ be normal operators.  Then $N \in \overline{\mathcal{S}(M)}$ if and only if
\begin{enumerate}
	\item $\sigma(M) \subseteq \sigma(N)$,
	\item each component of $\sigma(N)$ intersects $\sigma(M)$,
	\item $\Gamma(N)(\lambda) = \Gamma(M)(\lambda)$ for all $\lambda \notin \sigma(N)$,
	\item if $\lambda \in \sigma(N)$ is not isolated in $\sigma(N)$, the component of $\lambda$ in $\sigma(N)$ contains some non-isolated point of $\sigma(M)$, and
	\item $N$ and $M$ have equivalent common spectral projections.
\end{enumerate}
\end{thm}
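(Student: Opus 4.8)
The plan is to verify that the five conditions are necessary and then to reverse-engineer a similarity realizing $N$ as a limit of conjugates of $M$. Conditions (1) and (2) are immediate from the semicontinuity of the spectrum recorded at the start of Section~\ref{sec:CLOSEDUNITARYORBITS}; condition (3) is exactly Lemma~\ref{ccoits}(2) applied to $N \in \overline{\mathcal{S}(M)}$; and condition (5) follows by feeding each analytic $\{0,1\}$-valued function $f$ into Lemma~\ref{analytic} to get $f(N) \in \overline{\mathcal{S}(f(M))}$ and then invoking Lemma~\ref{mvneopsaaueop} to conclude that $f(N)$ and $f(M)$ are Murray--von Neumann equivalent.

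For the necessity of (4) I would argue by contradiction using an algebraicity trick. Let $\lambda$ be non-isolated in $\sigma(N)$ with component $C$, and suppose $C$ met $\sigma(M)$ only in points that are isolated in $\sigma(M)$. Since $C$ is a closed component of the compact set $\sigma(N)$ and $\sigma(M) \subseteq \sigma(N)$, the set $\sigma(M) \cap C$ would be compact and discrete, hence a finite set $\{\mu_1, \dots, \mu_k\}$. Because $C$ is clopen in $\sigma(N)$, the function $g$ equal to $z$ on a neighbourhood of $C$ and to $0$ on a neighbourhood of $\sigma(N) \setminus C$ is analytic near $\sigma(N) = \sigma(N) \cup \sigma(M)$, so Lemma~\ref{analytic} gives $g(N) \in \overline{\mathcal{S}(g(M))}$. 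Now $g(M) = M\,\chi_C(M)$ has finite spectrum $\{0, \mu_1, \dots, \mu_k\}$, so the polynomial $p(z) = z \prod_{j} (z - \mu_j)$ annihilates it; since $p$ is continuous and $p$ vanishes on the entire similarity orbit of $g(M)$, it vanishes on the closure, forcing $p(g(N)) = 0$ and hence $\sigma(g(N))$ finite. But $\sigma(g(N)) = g(\sigma(N)) \supseteq C$ is infinite because $\lambda \in C$ is non-isolated, a contradiction. This establishes (4).

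For sufficiency I would assemble the similarity in two stages, reusing the scaffolding of Section~\ref{sec:DISTANCEUNITARYORBITS}. First, using the $\epsilon$-grid and the matching of spectral projections coming from condition (5) (exactly as in the proof of Theorem~\ref{ausiuspi}), I would cut $\sigma(N)$ into finitely many clopen pieces $K_1, \dots, K_r$ and pass to the corner algebras $P_i \mathfrak{A} P_i$, where $P_i$ is the common spectral projection of $N$ and $M$ for $K_i$; these corner projections are Murray--von Neumann equivalent by (5), so we may align them. It then suffices to treat each corner, where $\sigma(M_i) \subseteq \sigma(N_i)$, each component of $\sigma(N_i)$ meets $\sigma(M_i)$ by (2), the index data agree by (3), and each non-degenerate component of $\sigma(N_i)$ carries a non-isolated point of $\sigma(M_i)$ by (4). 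The second, genuinely substantive, stage is to produce inside each corner a \emph{normal} operator $\widehat{M}_i \in \overline{\mathcal{S}(M_i)}$ with $\sigma(\widehat{M}_i) = \sigma(N_i)$ and the same index function and spectral-projection data as $N_i$; this is the only place the spectrum is genuinely enlarged, so it must use non-unitary similarities, and it is driven by condition (4) together with the filling-in construction of \cite{Sk} and the combinatorial spectral arguments of \cite{He} (a non-isolated point of $\sigma(M_i)$ supplies the reservoir of accumulating spectrum needed to grow each continuum component of $\sigma(N_i)$). Once $\widehat{M}_i$ is built, $\widehat{M}_i$ and $N_i$ are normal with equal spectrum, equal index function, and equivalent common spectral projections, so they are approximately unitarily equivalent --- by Theorem~\ref{ausiuspi} when $K_1(\mathfrak{A})$ is trivial and by \cite[Theorem~1.7]{Dad} in general (or, quantitatively, by Corollary~\ref{hausdorffdistanceforsimilarity} and Theorem~\ref{nonzeroindexdistance}, which give $dist(\mathcal{U}(N_i), \mathcal{U}(\widehat{M}_i)) = 0$). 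Hence $N_i \in \overline{\mathcal{U}(\widehat{M}_i)} \subseteq \overline{\mathcal{S}(M_i)}$, and summing the resulting approximate similarities over the corners (a block-diagonal sum of invertibles is invertible in $\mathfrak{A}$) yields $N \in \overline{\mathcal{S}(M)}$.

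The main obstacle is the second stage: enlarging $\sigma(M_i)$ up to $\sigma(N_i)$ by similarity while preserving the index and spectral-projection invariants. Unitary equivalence cannot change the spectrum, so this step is where condition (4) must do its work, and it is precisely the content imported from \cite{Sk} and \cite{He}; checking that the enlargement can be arranged to respect (3) and (5) simultaneously --- so that the concluding approximate-unitary-equivalence step applies --- is the delicate bookkeeping I expect to occupy most of the argument. By contrast, the reduction to corners via spectral projections and the final appeal to the distance results of Section~\ref{sec:DISTANCEUNITARYORBITS} should be routine.
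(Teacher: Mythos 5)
Your necessity arguments for (1)--(3) and (5) match the paper's, and your overall strategy for sufficiency --- enlarge $\sigma(M)$ by similarities until it nearly fills $\sigma(N)$, then finish with the approximate unitary equivalence and distance estimates of Section \ref{sec:DISTANCEUNITARYORBITS} --- is the right shape. But the step you yourself label ``the second, genuinely substantive, stage'' is the actual content of the theorem, and you have left it as a black box: saying the enlargement ``must use non-unitary similarities'' and ``is driven by condition (4) together with the filling-in construction of \cite{Sk}'' does not exhibit a single invertible element. The paper's mechanism is concrete. At a cluster point $\mu$ of $\sigma(M)$ one chooses a sequence of \emph{distinct} points $\mu_q \to \mu$ in $\sigma(M)$, uses {\cite[Theorem 1.7]{Dad}} to replace $M$ up to approximate unitary equivalence by $VMV^* + \mathrm{diag}(\mu_q,\ldots,\mu_{q+2^\ell-1})$ inside the UHF corner of Lemma \ref{UHFdirectsum}, and then applies the elementary upper-triangular similarity of Corollary \ref{nbynsim} to absorb a strictly upper-triangular nilpotent $Q$; letting $q\to\infty$ gives $VMV^* + \mu(I_\mathfrak{A}-P) + Q \in \overline{\mathcal{S}(M)}$ (Lemma \ref{directsumnil2}). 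Since by {\cite[Theorem 6.10]{Sk}} a normal operator whose spectrum is an $\epsilon$-disk is a norm limit of such nilpotent matrices, the spectrum of $M$ can be fattened by an $\epsilon$-disk about $\mu$ while remaining in $\overline{\mathcal{S}(M)}$ (Lemma \ref{fillinspec2}); iterating over a chain of overlapping disks fills each continuum component, and Theorem \ref{nonzeroindexdistance} then closes the Hausdorff gap. This is exactly where condition (4) is consumed --- it supplies the cluster point $\mu$ --- and without this (or an equivalent) construction your argument does not establish sufficiency.

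Two smaller points. In your necessity argument for (4) you assert that the component $C$ of $\lambda$ is clopen in $\sigma(N)$; connected components of a compact set need not be relatively open (consider $\{0\}\cup\{1/n \mid n \in \mathbb{N}\}$, or arcs accumulating on an arc), so your function $g$ need not extend analytically. The paper splits into cases: if $K_\lambda$ is isolated among the components, your polynomial argument applies; if not, conditions (1) and (2) already force a cluster point of $\sigma(M)$ in $K_\lambda$. Finally, your first-stage reduction to corners $P_i\mathfrak{A}P_i$ is plausible but is not what the paper does --- it handles one component at a time inside the ambient algebra via Lemma \ref{fillinspec2}, which avoids having to verify how the index function and the spectral-projection data behave under compression to a corner.
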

\begin{thm}
\label{main}
Let $\mathfrak{A}$ be a unital C$^*$-algebra with the following properties;
\begin{enumerate}
	\item $\mathfrak{A}$ has property weak (FN),
	\item every non-zero projection in $\mathfrak{A}$ is properly infinite, and
	\item any two non-zero projections in $\mathfrak{A}$ are Murray-von Neumann equivalent.
\end{enumerate}
(For example, $\mathcal{O}_2$ and every type III factor with separable predual.)
\par
Let $N,M \in \mathfrak{A}$ be normal operators such that $\lambda I_\mathfrak{A} - M \in \mathfrak{A}^{-1}_0$ for all $\lambda \notin \sigma(M)$.  Then $N \in \overline{\mathcal{S}(M)}$ if and only if
\begin{enumerate}
	\item $\sigma(M) \subseteq \sigma(N)$,
	\item each component of $\sigma(N)$ intersects $\sigma(M)$,
	\item $\lambda I_\mathfrak{A} - N \in \mathfrak{A}^{-1}_0$ for all $\lambda \notin \sigma(N)$, and
	\item if $\lambda \in \sigma(N)$ is not isolated in $\sigma(N)$, the component of $\lambda$ in $\sigma(N)$ contains some non-isolated point of $\sigma(M)$.
\end{enumerate}
\end{thm}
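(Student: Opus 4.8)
The plan is to treat the two directions separately, deriving necessity from results already in hand and reducing sufficiency to the spectrum-enlarging construction of \cite{Sk} followed by the approximate-unitary-equivalence criterion of Corollary \ref{aueios}. For necessity, suppose $N \in \overline{\mathcal{S}(M)}$. Conditions (1) and (2) are immediate from the semicontinuity of the spectrum recalled in Section \ref{sec:CLOSEDUNITARYORBITS}, namely $\sigma(M) \subseteq \sigma(N)$ and $\sigma(M)$ meets every component of $\sigma(N)$. For (3), fix $\lambda \notin \sigma(N)$; then $\lambda \notin \sigma(M)$ by (1), so $\lambda I_\mathfrak{A} - M \in \mathfrak{A}^{-1}_0$ by hypothesis, and Lemma \ref{ccoits}(1) applied to $N \in \overline{\mathcal{S}(M)}$ yields $\lambda I_\mathfrak{A} - N \in \mathfrak{A}^{-1}_0$.

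The one genuinely new point is necessity of (4), which I would establish by contradiction with a polynomial trick. Suppose $\lambda_0 \in \sigma(N)$ is non-isolated with component $C$, but $C$ meets $\sigma(M)$ only in points isolated in $\sigma(M)$; equivalently $C$ is disjoint from the closed set $\sigma(M)_{\mathrm{acc}}$ of accumulation points of $\sigma(M)$. Since $C$ is a connected component of the compact metric space $\sigma(N)$ it is the intersection of the clopen subsets of $\sigma(N)$ containing it, so a compactness argument produces a clopen $D \subseteq \sigma(N)$ with $C \subseteq D$ and $D \cap \sigma(M)_{\mathrm{acc}} = \emptyset$; then $\sigma(M) \cap D$ is a compact set of isolated points, hence finite. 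Let $g$ be the analytic function that equals $z$ on a neighbourhood of $D$ and $0$ on a neighbourhood of $\sigma(N) \setminus D$. By Lemma \ref{analytic}, $g(N) \in \overline{\mathcal{S}(g(M))}$, and $g(M)$ is normal with finite spectrum $(\sigma(M) \cap D) \cup \{0\}$, so it is annihilated by the monic polynomial $p$ vanishing on that finite set. As $p(V g(M) V^{-1}) = V p(g(M)) V^{-1} = 0$ for every invertible $V$ and $p$ is norm-continuous, $p(g(N)) = 0$, whence $\sigma(g(N)) = g(\sigma(N))$ is finite; since $g$ is the identity on $D$ this forces $\sigma(N) \cap D$ to be finite, contradicting that $\lambda_0$ is a non-isolated point of $\sigma(N)$ lying in the relatively open set $D$.

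For sufficiency, assume (1)--(4). The strategy is to produce a normal operator $M_0 \in \overline{\mathcal{S}(M)}$ with $\sigma(M_0) = \sigma(N)$; granting this, for $\lambda \notin \sigma(N)$ one has $\lambda \notin \sigma(M)$, so $\lambda I_\mathfrak{A} - M \in \mathfrak{A}^{-1}_0$ and Lemma \ref{ccoits}(1) gives $\lambda I_\mathfrak{A} - M_0 \in \mathfrak{A}^{-1}_0$. Thus $N$ and $M_0$ both satisfy the hypotheses of Corollary \ref{aueios} (the algebra has property weak (FN) and all nonzero projections Murray--von Neumann equivalent), so $N \sim_{au} M_0$ because $\sigma(N) = \sigma(M_0)$, and therefore $N \in \overline{\mathcal{U}(M_0)} \subseteq \overline{\mathcal{S}(M_0)} \subseteq \overline{\mathcal{S}(M)}$ by transitivity of the closed similarity orbit. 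Everything thus reduces to enlarging $\sigma(M)$ up to $\sigma(N)$ inside $\overline{\mathcal{S}(M)}$, and here conditions (2) and (4) are exactly what make this possible: one proceeds component by component on $\sigma(N)$ (the separation of components letting the construction split over mutually orthogonal corners), an isolated single-point component already lies in $\sigma(M)$ by (2), while on a component carrying non-isolated points condition (4) supplies a non-isolated point of $\sigma(M)$ at which new spectrum may be accumulated. The spectrum-filling at such a seed, growing a prescribed connected compact set out of a sequence of eigenvalues converging to the seed while remaining in the closed similarity orbit, is furnished by \cite{Sk}.

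I expect this spectrum-filling to be the main obstacle. The polynomial obstruction uncovered in the necessity argument shows that new spectrum can be created only by accumulating at existing accumulation points of $\sigma(M)$, so the delicate step is to carry out a bidiagonal, weight-type construction producing in the limit a normal operator with \emph{exactly} the spectrum of $\sigma(N)$ and with no spurious index, and then to glue these constructions compatibly across all components of $\sigma(N)$. Managing the bookkeeping when $\sigma(N)$ has infinitely many components, and ensuring the corner decompositions are consistent with the available Murray--von Neumann equivalences, is where the technical weight of the proof lies.
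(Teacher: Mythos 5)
Your necessity argument is sound and follows the paper's own route: (1)--(3) from semicontinuity of the spectrum and Lemma \ref{ccoits}, and (4) by cutting down to a clopen neighbourhood of the offending component via Lemma \ref{analytic} and observing that a polynomial annihilating the finite-spectrum compression of $M$ must annihilate everything in its closed similarity orbit. Your reduction of sufficiency to Corollary \ref{aueios} is also logically valid \emph{if} one can produce $M_0 \in \overline{\mathcal{S}(M)}$ with $\sigma(M_0) = \sigma(N)$ exactly. But that is precisely where the proposal has a genuine gap, in two respects. First, the actual mechanism for enlarging spectrum inside $\overline{\mathcal{S}(M)}$ is never supplied: the paper's engine is Corollary \ref{nbynsim} together with Lemma \ref{directsumnil} (a ``direct sum'' $\pi(M \oplus (\mu I + Q))$ with $Q$ a nilpotent scalar matrix and $\mu$ a cluster point of $\sigma(M)$ lies in $\overline{\mathcal{S}(M)}$, because $M$ is approximately unitarily equivalent to $\pi(M \oplus T_q)$ for diagonal $T_q$ with distinct eigenvalues converging to $\mu$, and $T_q + Q \sim T_q$), combined with Proposition \ref{scalardisk2} (a normal with disk spectrum is a limit of nilpotent scalar matrices). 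Citing \cite{Sk} and a ``bidiagonal, weight-type construction'' does not substitute for this; you yourself identify it as the main obstacle and leave it unresolved.

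Second, the target you set --- exact equality $\sigma(M_0) = \sigma(N)$ --- is both harder than necessary and not what the disk-absorption method delivers: absorbing a nilpotent limit at a cluster point $\mu$ adds an entire disk $\{z : |z-\mu| \le \epsilon\}$ to the spectrum, which in general overshoots $\sigma(N)$. The paper sidesteps this by never aiming for exact spectrum. Instead, for each $\epsilon > 0$ it produces (by finitely many applications of Lemma \ref{fillinspec}, using that $\sigma(N)$ has a finite $\epsilon$-net) a normal $M' \in \overline{\mathcal{S}(M)}$ whose spectrum is a $2\epsilon$-net for $\sigma(N)$ and which still satisfies the hypotheses of the unitary-orbit distance formula; Proposition \ref{hausdorffdist} then gives $dist(\mathcal{U}(N), \mathcal{U}(M')) = d_H(\sigma(N), \sigma(M')) \le 2\epsilon$, hence $dist(N, \mathcal{S}(M)) \le 2\epsilon$. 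This quantitative detour through the distance between unitary orbits is the idea your proposal is missing, and it also dissolves the ``infinitely many components'' bookkeeping you flag at the end, since only finitely many filling steps are ever needed for a fixed $\epsilon$.
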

Note if $N \in \overline{\mathcal{S}(M)}$ then the first two conditions must hold by discussions from the beginning of Section \ref{sec:CLOSEDUNITARYORBITS} and the third condition follows from Lemma \ref{ccoits}.  The fifth condition of Theorem \ref{main2} is necessary by Lemma \ref{analytic} and Lemma \ref{simorbitofprojections}.
\par
To see that the fourth conclusion is necessary, let $K_\lambda$ be the connected component of $\sigma(N)$ containing $\lambda$.  We note that if $K_\lambda$ is not isolated in $\sigma(N)$ (that is, every open neighbourhood of $K_\lambda$ intersects a different connected component of $\sigma(N)$) then the first two conditions imply that $\sigma(M) \cap K_\lambda$ contains a cluster point of $\sigma(M)$.  Otherwise if $K_\lambda$ is isolated in $\sigma(N)$, the characteristic function $\chi_{K_\lambda}$ of $K_\lambda$ can be extended to an analytic function on a neighbourhood of $\sigma(N)$.  Thus Lemma \ref{analytic} implies $\chi_{K_\lambda}(N) \in \overline{\mathcal{S}(\chi_{K_\lambda}(M))}$.  If $\sigma(M) \cap K_\lambda$ does not contain a cluster point of $\sigma(M)$ then  $\chi_{K_\lambda}(M)$ must have finite spectrum.  Hence there exists a non-zero polynomial $p$ such that $p(\chi_{K_\lambda}(M)) = 0$.  Clearly this implies $p(T) = 0$ for all $T \in \overline{\mathcal{S}(\chi_{K_\lambda}(M))}$ so $p(\chi_{K_\lambda}(N)) = 0$.  Since $K_\lambda$ is a connected, compact subset of $\sigma(N)$ that is not a singleton, this is impossible.  Hence the fourth condition is necessary.  An alternative proof of the necessity of the fourth condition may be obtained by considering the separable C$^*$-algebra generated by $N$, $M$, and a countable number of invertible elements, by taking an infinite direct sum of a faithful representation of this C$^*$-algebra on a separable Hilbert space, and by appealing to property (e) of {\cite[Theorem 1]{BH}}.
\par
By applying Theorem \ref{main2} in conjunction with {\cite[Theorem 1.7]{Dad}}, the following result is easily obtained.
\begin{cor}
\label{jointsimimpliesaue}
Let $\mathfrak{A}$ be a unital, simple, purely infinite C$^*$-algebra and let $N_1, N_2 \in \mathfrak{A}$ be normal operators.  If $N_1 \in \overline{\mathcal{S}(N_2)}$ and $N_2 \in \overline{\mathcal{S}(N_1)}$ then $N_1 \sim_{au} N_2$.
\end{cor}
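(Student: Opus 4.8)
The plan is to extract from the two similarity-orbit hypotheses exactly the three pieces of data that Dadarlat's classification (Theorem \ref{dadaresult}), in the planar specialization recorded in the introduction, requires: equality of spectra, equality of the induced $K_1$-maps, and equality of the induced $K_0$-maps. All of the genuine analytic work is already carried by Theorem \ref{main2}, so what remains is essentially $K$-theoretic bookkeeping.

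First I would apply Theorem \ref{main2} twice. Applying it with $(N,M) = (N_1, N_2)$ (legitimate since $N_1 \in \overline{\mathcal{S}(N_2)}$) yields, among its five conditions, that $\sigma(N_2) \subseteq \sigma(N_1)$, that $\Gamma(N_1)(\lambda) = \Gamma(N_2)(\lambda)$ for all $\lambda \notin \sigma(N_1)$, and that $N_1$ and $N_2$ have equivalent common spectral projections. Applying it with the roles reversed, $(N,M) = (N_2, N_1)$, gives in particular $\sigma(N_1) \subseteq \sigma(N_2)$. Combining the two containments yields $\sigma(N_1) = \sigma(N_2) =: X$, which is condition (1) of the planar classification.

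Next, with $X := \sigma(N_1) = \sigma(N_2)$ fixed, I would translate the remaining conditions into equalities of induced maps on $K_*(C(X)) = K_0(C(X)) \oplus K_1(C(X))$. Because $X$ is a planar compact set, $K_1(C(X))$ is generated by the classes $[\lambda I_\mathfrak{A} - N]_1$ attached to the bounded complementary components of $X$, which is precisely what the index function records; hence the pointwise equality $\Gamma(N_1)(\lambda) = \Gamma(N_2)(\lambda)$ for all $\lambda \notin X$ upgrades to the equality of homomorphisms $\Gamma(N_1) = \Gamma(N_2)$ on $K_1(C(X))$, i.e. $\varphi_{N_1}$ and $\varphi_{N_2}$ agree on the $K_1$-summand. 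The condition that $N_1$ and $N_2$ have equivalent common spectral projections is, by the remark following the definition of that notion, exactly the statement that $\varphi_{N_1}$ and $\varphi_{N_2}$ induce the same homomorphism $K_0(C(X)) \to K_0(\mathfrak{A})$. Thus the two $\ast$-homomorphisms agree on all of $K_*(C(X))$, and invoking the planar form of Theorem \ref{dadaresult} from the introduction gives $\varphi_{N_1} \sim_{au} \varphi_{N_2}$, that is, $N_1 \sim_{au} N_2$.

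I do not expect a serious obstacle, as the heavy lifting lives in Theorem \ref{main2} and in Dadarlat's theorem, both of which may be assumed. The one point demanding a little care is the upgrade from the pointwise index equality to the equality of the full $K_1$-homomorphism: one must confirm that the generators $[\lambda I_\mathfrak{A} - N]_1$, $\lambda \notin X$, genuinely exhaust $K_1(C(X))$. For planar $X$ this is standard and is already implicit in the way the index function $\Gamma$ is set up in the paper, so it should cause no real difficulty.
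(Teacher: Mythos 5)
Your proof is correct and takes essentially the same route as the paper, which obtains the corollary by combining the necessary conditions of Theorem \ref{main2} (applied in both directions to get equal spectra, equal index functions, and equivalent common spectral projections) with the planar specialization of Dadarlat's theorem stated in the introduction; the paper leaves the $K$-theoretic bookkeeping implicit and you have simply written it out, correctly using the remark following the definition of equivalent common spectral projections to identify the $K_0$-data. The only observation worth adding is that you need only the easy (necessity) direction of Theorem \ref{main2}, i.e.\ Lemma \ref{ccoits}, Lemma \ref{analytic}, and Lemma \ref{mvneopsaaueop}, not the hard sufficiency direction.
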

To begin the proofs of Theorem \ref{main2} and Theorem \ref{main}, we note the following trivial result about similarity of operators in C$^*$-algebras.
\begin{lem}
\label{2by2sim}
Let $\mathfrak{A}$ be a unital C$^*$-algebra, let $P \in \mathfrak{A}$ be a non-trivial projection, let $Z \in (I_\mathfrak{A} - P)\mathfrak{A}(I_\mathfrak{A} - P)$, and let $X \in \mathfrak{A}$ be such that $PX(I_\mathfrak{A} - P) = X$.  If $\lambda \notin \sigma_{(I_\mathfrak{A} - P)\mathfrak{A}(I_\mathfrak{A} - P)}(Z)$ then
\[
\lambda P + X + Z \sim \lambda P + Z.
\]
\end{lem}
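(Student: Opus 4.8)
The plan is to exploit the $2\times 2$ block decomposition of $\mathfrak{A}$ determined by the projection $P$. With respect to $P$ and $I_\mathfrak{A} - P$, the hypotheses say precisely that $\lambda P$ sits in the corner $P\mathfrak{A}P$, that $X = PX(I_\mathfrak{A}-P)$ is strictly upper triangular, and that $Z$ lives in the lower corner $\mathfrak{C} := (I_\mathfrak{A}-P)\mathfrak{A}(I_\mathfrak{A}-P)$. Thus $\lambda P + X + Z$ and $\lambda P + Z$ are the same block operator except for the off-diagonal entry $X$, and the goal is to conjugate one into the other by an invertible element.

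First I would look for the implementing similarity among the unipotent elements $V := I_\mathfrak{A} + Y$ with $Y = PY(I_\mathfrak{A}-P)$ in the upper corner. Since $(I_\mathfrak{A}-P)P = 0$ forces $Y^2 = 0$, each such $V$ is invertible with $V^{-1} = I_\mathfrak{A} - Y$, so conjugation by $V$ preserves the similarity class. Expanding $V(\lambda P + Z)V^{-1} = (I_\mathfrak{A}+Y)(\lambda P + Z)(I_\mathfrak{A}-Y)$ and repeatedly using the corner relations $PY = Y$, $ZP = 0$, and the vanishing of any product of two upper-corner elements, most cross terms drop out and one is left with $\lambda P + Z + (YZ - \lambda Y)$. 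Hence the conjugation produces exactly $\lambda P + X + Z$ precisely when $Y$ solves the Sylvester-type equation $YZ - \lambda Y = X$, equivalently $Y\big(\lambda(I_\mathfrak{A}-P) - Z\big) = -X$, where I have used $Y = Y(I_\mathfrak{A}-P)$ to factor on the right.

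The point where the hypothesis is consumed is the solvability of this equation. The factor $\lambda(I_\mathfrak{A}-P) - Z$ is nothing but $\lambda I_\mathfrak{C} - Z$ in the corner algebra $\mathfrak{C}$, whose unit is $I_\mathfrak{A}-P$, and the assumption $\lambda \notin \sigma_{(I_\mathfrak{A}-P)\mathfrak{A}(I_\mathfrak{A}-P)}(Z)$ says exactly that it is invertible there. Setting $Y := -X\big(\lambda(I_\mathfrak{A}-P) - Z\big)^{-1}$, with the inverse computed inside $\mathfrak{C}$, gives an element of the upper corner, since $X = PX(I_\mathfrak{A}-P)$ absorbs $P$ on the left while the resolvent carries $I_\mathfrak{A}-P$ on the right, and a one-line check using $X(I_\mathfrak{A}-P) = X$ confirms $Y\big(\lambda(I_\mathfrak{A}-P)-Z\big) = -X$. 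Then $V(\lambda P + Z)V^{-1} = \lambda P + X + Z$, so $\lambda P + Z \sim \lambda P + X + Z$, and the desired conclusion follows because $\sim$ is symmetric.

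I expect the only genuine subtlety to be bookkeeping about which algebra the resolvent is inverted in: $\lambda(I_\mathfrak{A}-P) - Z$ is never invertible in $\mathfrak{A}$, as it annihilates $P$, so one must work inside the corner $\mathfrak{C}$, and this is exactly why the spectrum in the statement is taken relative to $\mathfrak{C}$ rather than to $\mathfrak{A}$. Everything else is the routine block computation sketched above.
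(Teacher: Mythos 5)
Your proposal is correct and is essentially the paper's own proof: both conjugate by the unipotent $I_\mathfrak{A}+Y$ with $Y$ an off-diagonal corner element obtained from $X$ times the resolvent $\bigl(\lambda(I_\mathfrak{A}-P)-Z\bigr)^{-1}$ computed in the corner algebra, the only difference being that you conjugate $\lambda P+Z$ into $\lambda P+X+Z$ (hence the minus sign on $Y$) while the paper conjugates in the opposite direction with $Y=X\bigl(\lambda(I_\mathfrak{A}-P)-Z\bigr)^{-1}$. Your block computation and the solvability of the Sylvester equation via invertibility in the corner are exactly the content of the paper's ``trivial computation.''
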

\begin{proof}
Note that if $Y := X(\lambda (I_\mathfrak{A} - P) - Z)^{-1}$ then
\[
T := I_\mathfrak{A} + Y
\]
is invertible with
\[
T^{-1} = I_\mathfrak{A} - Y.
\]
A trivial computation shows
\[
T (\lambda P + X + Z) T^{-1} = \lambda P + Z.
\]
\end{proof}
\begin{cor}
\label{nbynsim}
Let $\mathfrak{A}$ be a unital C$^*$-algebra, let $n \in \mathbb{N}$, let $\lambda_1, \ldots, \lambda_n$ be distinct complex scalars, let $\{P_j\}^n_{j=1} \subseteq \mathfrak{A}$ be a set of non-trivial orthogonal projections with $\sum^n_{j=1} P_j = I_\mathfrak{A}$, and let $\{A_{i,j}\}^n_{i,j=1} \subseteq \mathfrak{A}$ be such that $A_{i,j} = 0$ if $i \geq j$ and $P_iA_{i,j}P_j = A_{i,j}$ for all $i < j$.  Then
\[
\sum^n_{j=1} \lambda_j P_j + \sum^n_{i,j=1} A_{i,j} \sim \sum^n_{j=1} \lambda_j P_j.
\]
\end{cor}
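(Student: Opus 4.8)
The plan is to prove the statement by induction on $n$, using Lemma \ref{2by2sim} to peel off one block at a time. The base case $n = 1$ is immediate: there are no off-diagonal terms, so the element equals $\lambda_1 P_1$ and is trivially similar to itself.

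For the inductive step I would take $P := P_1$, note that $I_\mathfrak{A} - P = P_2 + \cdots + P_n$, and decompose the element $T := \sum_{j=1}^n \lambda_j P_j + \sum_{i,j=1}^n A_{i,j}$ as $\lambda_1 P_1 + X + Z$, where
\[
X := \sum_{j=2}^n A_{1,j}, \qquad Z := \sum_{j=2}^n \lambda_j P_j + \sum_{2 \le i < j \le n} A_{i,j}.
\]
The support conditions $P_i A_{i,j} P_j = A_{i,j}$ give $P X (I_\mathfrak{A} - P) = X$ and $Z \in (I_\mathfrak{A} - P)\mathfrak{A}(I_\mathfrak{A} - P)$, so Lemma \ref{2by2sim} will apply and produce $T \sim \lambda_1 P_1 + Z$, provided I first verify the hypothesis $\lambda_1 \notin \sigma_{(I_\mathfrak{A}-P)\mathfrak{A}(I_\mathfrak{A}-P)}(Z)$.

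That spectral verification is the one step needing genuine care. I would show $\sigma_{(I_\mathfrak{A}-P)\mathfrak{A}(I_\mathfrak{A}-P)}(Z) \subseteq \{\lambda_2, \ldots, \lambda_n\}$ by writing $Z = D + N$ with $D := \sum_{j=2}^n \lambda_j P_j$ and $N := \sum_{2 \le i < j} A_{i,j}$. For $\mu \notin \{\lambda_2, \ldots, \lambda_n\}$ the diagonal part $\mu(I_\mathfrak{A}-P) - D = \sum_{j=2}^n (\mu - \lambda_j) P_j$ is invertible in the corner with inverse $\sum_{j=2}^n (\mu - \lambda_j)^{-1} P_j$. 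Because $N$ is strictly upper-triangular with respect to the grading given by the $P_j$ (each summand carries the range of $P_j$ into that of $P_i$ with $i < j$), it is nilpotent, and the same holds for $(\mu(I_\mathfrak{A}-P)-D)^{-1}N$; factoring $\mu(I_\mathfrak{A}-P) - Z = (\mu(I_\mathfrak{A}-P)-D)\bigl(I_\mathfrak{A} - P - (\mu(I_\mathfrak{A}-P)-D)^{-1}N\bigr)$ then exhibits it as a product of invertibles. Since $\lambda_1$ is distinct from $\lambda_2, \ldots, \lambda_n$, the hypothesis holds.

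Finally, $Z$ has the same form as the original element, now with the $n - 1$ blocks $P_2, \ldots, P_n$ inside the unital C$^*$-algebra $(I_\mathfrak{A}-P)\mathfrak{A}(I_\mathfrak{A}-P)$, so the induction hypothesis yields an invertible $W$ in this corner with $W Z W^{-1} = \sum_{j=2}^n \lambda_j P_j$. Then $P_1 + W$ is invertible in $\mathfrak{A}$ with inverse $P_1 + W^{-1}$, and since $P_1$ annihilates everything in the corner a direct computation gives $(P_1 + W)(\lambda_1 P_1 + Z)(P_1 + W^{-1}) = \sum_{j=1}^n \lambda_j P_j$. Transitivity of $\sim$ together with $T \sim \lambda_1 P_1 + Z$ then finishes the argument. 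I expect the nilpotency and spectrum bookkeeping to be the only real obstacle; the rest is a routine application of Lemma \ref{2by2sim} and elementary algebra in the corner.
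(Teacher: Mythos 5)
Your proposal is correct and follows essentially the same route as the paper: apply Lemma \ref{2by2sim} with $P = P_1$ to strip off the first row of off-diagonal terms, verify $\lambda_1 \notin \sigma_{(I_\mathfrak{A}-P)\mathfrak{A}(I_\mathfrak{A}-P)}(Z)$, and then induct inside the corner $(I_\mathfrak{A}-P_1)\mathfrak{A}(I_\mathfrak{A}-P_1)$. You simply supply more detail than the paper does at the two points it leaves to the reader (the nilpotent-perturbation argument for the spectrum of $Z$, and the passage from a similarity in the corner back to one in $\mathfrak{A}$ via $P_1 + W$), and both of those details check out.
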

\begin{proof}
By applying Lemma \ref{2by2sim} with $P := P_1$, $Z := \sum^n_{j=1} \lambda_j P_j + \sum^n_{i,j=2} A_{i,j}$ (it is elementary to show that $\sigma_{(I_\mathfrak{A} - P)\mathfrak{A} (I_\mathfrak{A} - P)}(Z) = \{\lambda_2,\ldots, \lambda_{n}\}$ so $\lambda_1 \notin \sigma(Z)$ by assumption), and $X:= \sum^n_{j=1} A_{1,j}$, we obtain that
\[
\sum^n_{j=1} \lambda_j P_j + \sum^n_{i,j=1} A_{i,j} \sim \sum^n_{j=1} \lambda_j P_j + \sum^n_{i,j=2} A_{i,j}.
\]
The result then proceeds by induction by considering the unital C$^*$-algebra $(I_\mathfrak{A} - P_1)\mathfrak{A} (I_\mathfrak{A} - P_1)$.
\end{proof}
To begin the proof of Theorem \ref{main2}, we first show that a `direct sum' of a normal operator and a nilpotent operator is in the similarity orbit of the normal operator.  The idea of this result is based on {\cite[Lemma 5.3]{He}}.
\begin{lem}
\label{directsumnil2}
Let $\mathfrak{A}$ be a unital, simple, purely infinite C$^*$-algebra, let $M \in \mathfrak{A}$, let $V \in \mathfrak{A}$ be a non-unitary isometry, let $P := VV^*$, and let $\mathfrak{B}:= \overline{\bigcup_{\ell\geq1} \mathcal{M}_{2^\ell}(\mathbb{C})}$ be the unital copy of the $2^\infty$-UHF C$^*$-algebra in $\mathfrak{C}$ given by Lemma \ref{UHFdirectsum}.  Suppose $\mu$ is a cluster point of $\sigma(M)$ and $Q \in \mathcal{M}_{2^\ell}(\mathbb{C}) \subseteq \mathfrak{B}$ is a nilpotent matrix for some $\ell \in \mathbb{N}$.  Then $VMV^* + \mu (I_\mathfrak{A} - P) + Q \in \overline{\mathcal{S}(M)}$.
\end{lem}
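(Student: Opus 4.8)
The plan is to reduce, through two successive similarities, the operator $T := VMV^* + \mu(I_\mathfrak{A} - P) + Q$ to a \emph{normal} operator that is approximately unitarily equivalent to $M$, and then to invoke transitivity of the closed similarity orbit. Throughout write $\mathfrak{C} := (I_\mathfrak{A} - P)\mathfrak{A}(I_\mathfrak{A} - P)$, let $\{E_{ij}\}_{i,j=1}^{2^\ell}$ be the matrix units of $\mathcal{M}_{2^\ell}(\mathbb{C}) \subseteq \mathfrak{B} \subseteq \mathfrak{C}$ (so $\sum_{j} E_{jj} = I_\mathfrak{A} - P$), and recall from Lemma \ref{UHFdirectsum} that $[Q_0]_0 = 0$ for every projection $Q_0 \in \mathfrak{B}$; since $V$ is an isometry with $V^*V = I_\mathfrak{A}$ and $VV^* = P$ we also have $[I_\mathfrak{A} - P]_0 = 0$. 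As a first reduction, since $Q$ is nilpotent, Schur triangularization produces a unitary $W_0 \in \mathcal{M}_{2^\ell}(\mathbb{C})$ with $W_0 Q W_0^*$ strictly upper triangular in the $E_{ij}$. Conjugating $T$ by the unitary $P + W_0 \in \mathfrak{A}$ fixes both $VMV^*$ and $\mu(I_\mathfrak{A}-P)$ and replaces $Q$ by $W_0 Q W_0^*$, and since membership in $\overline{\mathcal{S}(M)}$ is invariant under such conjugation I may assume $Q = \sum_{i<j} q_{ij}E_{ij}$.

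Next I would fix $\epsilon > 0$ and remove the nilpotent part by a similarity. Because $\mu$ is a cluster point of $\sigma(M)$, I can choose $2^\ell$ \emph{distinct} points $\mu_1, \ldots, \mu_{2^\ell} \in \sigma(M)$ with $|\mu_j - \mu| < \epsilon$ for all $j$. Set $T_\epsilon := VMV^* + \sum_{j} \mu_j E_{jj} + Q$, so that $\|T - T_\epsilon\| < \epsilon$. The element $Z := \sum_j \mu_j E_{jj} + Q \in \mathfrak{C}$ is upper triangular with distinct diagonal scalars relative to the orthogonal projections $\{E_{jj}\}$ summing to the unit $I_\mathfrak{A}-P$ of $\mathfrak{C}$, so Corollary \ref{nbynsim} (applied inside $\mathfrak{C}$) yields $S \in \mathfrak{C}^{-1}$ with $SZS^{-1} = \sum_j \mu_j E_{jj}$. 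As $VMV^* \in P\mathfrak{A}P$ while $S, S^{-1} \in \mathfrak{C}$ lie in the orthogonal corner, conjugating $T_\epsilon$ by the invertible $P + S \in \mathfrak{A}^{-1}$ (with inverse $P + S^{-1}$) fixes $VMV^*$ and sends $Z$ to $\sum_j \mu_j E_{jj}$. Hence $T_\epsilon \sim N_\epsilon$, where $N_\epsilon := VMV^* + \sum_j \mu_j E_{jj}$ is a normal operator.

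It then remains to prove $N_\epsilon \sim_{au} M$, which I would do by verifying the hypotheses of {\cite[Theorem 1.7]{Dad}}. Since each $\mu_j \in \sigma(M)$, the block-diagonal structure gives $\sigma(N_\epsilon) = \sigma(M) \cup \{\mu_1, \ldots, \mu_{2^\ell}\} = \sigma(M)$. For $\lambda \notin \sigma(M)$ the corner element $\sum_j (\lambda - \mu_j)E_{jj}$ is invertible in the UHF algebra $\mathfrak{B}$, hence, as $K_1(\mathfrak{B})$ is trivial, lies in the identity component of $\mathfrak{C}^{-1}$; therefore $\lambda I_\mathfrak{A} - N_\epsilon = V(\lambda I_\mathfrak{A} - M)V^* + \sum_j(\lambda-\mu_j)E_{jj}$ is connected in $\mathfrak{A}^{-1}$ to $V(\lambda I_\mathfrak{A}-M)V^* + (I_\mathfrak{A}-P)$, and Lemma \ref{connectedcomponenttechnicallity} (applied to the unitary part of $\lambda I_\mathfrak{A}-M$) then gives $\Gamma(N_\epsilon)(\lambda) = \Gamma(M)(\lambda)$. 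Finally, for any $\{0,1\}$-valued analytic $f$ on a neighbourhood of $\sigma(M)$ one computes $f(N_\epsilon) = Vf(M)V^* + \sum_j f(\mu_j)E_{jj}$, whence $[f(N_\epsilon)]_0 = [f(M)]_0$ using $[E_{jj}]_0 = 0$, with $f(N_\epsilon)=0$ exactly when $f(M)=0$; by {\cite[Theorem 1.4]{Cu}} this shows that $N_\epsilon$ and $M$ have equivalent common spectral projections. Thus $N_\epsilon \sim_{au} M$, so $N_\epsilon \in \overline{\mathcal{U}(M)} \subseteq \overline{\mathcal{S}(M)}$.

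To conclude, since $T_\epsilon = (P+S)^{-1}N_\epsilon(P+S)$ and $N_\epsilon \in \overline{\mathcal{S}(M)}$, conjugating the closed set $\overline{\mathcal{S}(M)}$ by $(P+S)^{-1}$ shows $T_\epsilon \in \overline{\mathcal{S}(M)}$ for each $\epsilon$; letting $\epsilon \to 0$ and using that $\overline{\mathcal{S}(M)}$ is closed gives $T \in \overline{\mathcal{S}(M)}$. I expect the main obstacle to be the verification that $N_\epsilon \sim_{au} M$, and in particular the index-function computation: one must correctly exploit that the perturbed scalars live in the $K_1$-trivial UHF copy $\mathfrak{B}$ together with Lemma \ref{connectedcomponenttechnicallity}, so that the isometric amplification $VMV^*$ contributes exactly $\Gamma(M)$, while the fact that every projection of $\mathfrak{B}$ is $K_0$-trivial is what forces the common spectral projections to match. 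The role of the cluster-point hypothesis is precisely to supply the distinct spectral points $\mu_j$ needed both for Corollary \ref{nbynsim} and for the exact equality $\sigma(N_\epsilon)=\sigma(M)$.
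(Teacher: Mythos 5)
Your proposal is correct and follows essentially the same route as the paper: triangularize $Q$, replace $\mu(I_\mathfrak{A}-P)$ by a diagonal $\sum_j \mu_j E_{jj}$ with distinct entries from $\sigma(M)$ near $\mu$, strip the nilpotent via Corollary \ref{nbynsim} applied in the corner, verify via {\cite[Theorem 1.7]{Dad}} that the resulting normal operator is approximately unitarily equivalent to $M$, and pass to the limit. You merely spell out the details (the conjugation by $P+S$ and the Dadarlat hypotheses) that the paper leaves implicit by referring to Theorem \ref{nonzeroindexdistance}.
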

\begin{proof}
Since $Q \in \mathcal{M}_{2^\ell}(\mathbb{C}) \subseteq \mathfrak{B}$ is a nilpotent matrix, $Q$ is unitarily equivalent to a strictly upper triangular matrix.  Thus we can assume $Q$ is strictly upper triangular.  By our assumptions on $\mu$ there exists a sequence $(\mu_j)_{j\geq1}$ of distinct scalars contained in $\sigma(M)$ that converges to $\mu$.  For each $q \in \mathbb{N}$ let 
\[
T_q:= diag(\mu_q, \mu_{q+1}, \ldots \mu_{q+2^\ell-1}) \in \mathcal{M}_{2^\ell}(\mathbb{C}) \subseteq \mathfrak{B}
\]
be the diagonal matrix with $\mu_q$, $\ldots$, $\mu_{q+2^\ell-1}$ along the diagonal.  
\par
Let $M_q := VMV^* + T_q \in \mathfrak{A}$.  As in the proof of Theorem \ref{nonzeroindexdistance}, it is easy to see by {\cite[Theorem 1.7]{Dad}} that $M_q$ is approximately unitarily equivalent to $M$ for each $q \in \mathbb{N}$.  Hence
\[
M \sim_{au}  M_q \sim M \oplus \left( \bigoplus^{n}_{k=1} T_q + Q\right)
\]
by Lemma \ref{nbynsim}.  Since $\lim_{q\to \infty} T_q + Q = \mu (I_\mathfrak{A} - P) + Q$, the result follows.
\end{proof}
Subsequently we have our next stepping-stone which based on {\cite[Corollary 5.5]{He}}.
\begin{lem}
\label{fillinspec2}
Let $\mathfrak{A}$ be a unital, simple, purely infinite C$^*$-algebra.  Let $N,M \in \mathfrak{A}$ be normal operators and write $\sigma(N) = K_1 \cup K_2$ where $K_1$ and $K_2$ are disjoint compact sets with $K_1$ connected.  Suppose 
\begin{enumerate}
	\item $\sigma(M) = K'_1 \cup K_2$ where $K'_1 \subseteq K_1$, 
	\item $\Gamma(N)(\lambda) = \Gamma(M)(\lambda)$ for all $\lambda \notin \sigma(N)$, and
	\item $N$ and $M$ have equivalent common spectral projections.
\end{enumerate}
If $K'_1$ contains a cluster point of $\sigma(M)$ then $N \in \overline{\mathcal{S}(M)}$.
\end{lem}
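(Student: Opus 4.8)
The plan is to reduce to a convenient model via \cite[Theorem 1.7]{Dad} and then fill in the missing spectrum using the nilpotent perturbations of Lemma \ref{directsumnil2}, in the spirit of Herrero's \cite[Corollary 5.5]{He}. Following the construction in the proof of Theorem \ref{nonzeroindexdistance}, I would take a non-unitary isometry $V$, set $P := VV^*$ and $\mathfrak{C} := (I_\mathfrak{A}-P)\mathfrak{A}(I_\mathfrak{A}-P)$, let $\mathfrak{B}$ be the unital UHF copy in $\mathfrak{C}$ from Lemma \ref{UHFdirectsum}, and use Lemma \ref{UHFanyspectrum} to choose a normal $N_0 \in \mathfrak{B}$ with $\sigma(N_0) = \sigma(N) = K_1 \cup K_2$. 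Hypotheses (1)--(3) are exactly what the $K$-theoretic verification in the proof of Theorem \ref{nonzeroindexdistance} requires in order to conclude $N \sim_{au} VMV^* + N_0$. Since $\overline{\mathcal{U}(VMV^* + N_0)} \subseteq \overline{\mathcal{S}(VMV^* + N_0)}$ and closed similarity orbits compose, it then suffices to prove $VMV^* + N_0 \in \overline{\mathcal{S}(M)}$.

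Next I would peel off the $K_2$ part. As $K_1$ and $K_2$ are disjoint compact sets, the characteristic functions $\chi_{K_1}, \chi_{K_2}$ are analytic near $\sigma(N_0)$, so $F_i := \chi_{K_i}(N_0)$ are orthogonal projections in $\mathfrak{B}$ with $F_1 + F_2 = I_\mathfrak{A} - P$, and $N_0 = N_0^{(1)} + N_0^{(2)}$ with $N_0^{(i)} := N_0 F_i$ and $\sigma_{F_i \mathfrak{B} F_i}(N_0^{(i)}) = K_i$. The operator $M' := VMV^* + N_0^{(2)}$ has $\sigma(M') = \sigma(M)$, and since every projection of $\mathfrak{B}$ is trivial in $K_0(\mathfrak{A})$ by Lemma \ref{UHFdirectsum}, the same verification as above gives $M \sim_{au} M'$; hence $\overline{\mathcal{S}(M)} = \overline{\mathcal{S}(M')}$ and the goal becomes $M' + N_0^{(1)} \in \overline{\mathcal{S}(M')}$. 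Writing $P' := P + F_2$, a $K$-theory computation with \cite[Theorem 1.4]{Cu} shows $[P']_0 = [I_\mathfrak{A}]_0$ while $P' \neq I_\mathfrak{A}$, so there is a non-unitary isometry $V'$ with $V'(V')^* = P'$ and $I_\mathfrak{A} - P' = F_1$. This returns me to the exact setting of Lemma \ref{directsumnil2}, now with $M'$ occupying the $P'$-corner, the cluster point $\mu \in K_1' \subseteq K_1$ of $\sigma(M') = \sigma(M)$, and the free UHF corner $F_1 \mathfrak{B} F_1$.

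Now I would attach the connected piece. After translating by $\mu$, the normal operator $N_0^{(1)} - \mu F_1$ lives in $F_1 \mathfrak{B} F_1$ and has connected spectrum $K_1 - \mu$ containing $0$. The crucial step is to approximate it in norm by nilpotents from the matrix subalgebras: for each $\epsilon > 0$ I would fix a fine net of $K_1$, build a spanning tree on it rooted near $\mu$ (possible because $K_1$ is connected), and use the matrix units of $\mathfrak{B}$ to assemble a nilpotent $Q_\epsilon \in \mathcal{M}_{2^\ell} \subseteq F_1 \mathfrak{B} F_1$ with $\|(N_0^{(1)} - \mu F_1) - Q_\epsilon\| < \epsilon$; this is the UHF incarnation of the fact that a normal operator with connected spectrum containing $0$ is a norm limit of nilpotents. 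Applying the relativized form of Lemma \ref{directsumnil2} to $M'$ with $V'$ and the nilpotent $Q_\epsilon$ yields $M' + \mu F_1 + Q_\epsilon \in \overline{\mathcal{S}(M')}$ for every $\epsilon$. Letting $\epsilon \to 0$, the operators $M' + \mu F_1 + Q_\epsilon$ converge in norm to $M' + N_0^{(1)}$, so $M' + N_0^{(1)} \in \overline{\mathcal{S}(M')} = \overline{\mathcal{S}(M)}$, and combined with the first paragraph this gives $N \in \overline{\mathcal{S}(M)}$.

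The main obstacle is this spectral filling: each approximant $M' + \mu F_1 + Q_\epsilon$ has spectrum only $\sigma(M') = K_1' \cup K_2$, yet the norm limit $M' + N_0^{(1)}$ acquires the strictly larger spectrum $K_1 \cup K_2$. Such an upward jump is legitimate precisely because $K_1$ is connected and contains the cluster point $\mu$ of $\sigma(M)$ — the two standing hypotheses — and realizing it amounts to transporting Herrero's description of the norm closure of the nilpotents (\cite[Corollary 5.5]{He}) into the finite matrix subalgebras of $\mathfrak{B}$, where the large off-diagonal couplings of $Q_\epsilon$ produce the pseudospectral filling of $K_1$. The remaining points — carrying out the $K$-theory in the two Dadarlat reductions and checking that Lemma \ref{directsumnil2} applies verbatim with $M'$ and $V'$ in place of $M$ and $V$ — I expect to be routine given the earlier results.
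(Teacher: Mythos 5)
Your overall architecture (reduce via \cite[Theorem 1.7]{Dad} to a direct-sum model $VMV^*+N_0$, then feed nilpotent matrices into Lemma \ref{directsumnil2} to enlarge the spectrum) matches the paper's, but the step you yourself flag as ``the main obstacle'' is not merely an obstacle --- it is false as stated, and this is a genuine gap. You require a nilpotent scalar matrix $Q_\epsilon \in \mathcal{M}_{2^\ell}(\mathbb{C})$ inside the UHF corner $F_1\mathfrak{B}F_1$ with $\left\|(N_0^{(1)} - \mu F_1) - Q_\epsilon\right\| < \epsilon$; that is, you need the normal operator $N_0^{(1)} - \mu F_1$, whose spectrum is $K_1 - \mu$, to be a norm limit of nilpotent matrices \emph{inside a tracial C$^*$-algebra}. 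But $F_1\mathfrak{B}F_1$ carries a faithful tracial state $\tau$, every nilpotent element of a finite-dimensional C$^*$-subalgebra has trace zero (as do all of its powers), so any norm limit $N$ of such nilpotents satisfies $\tau(N^k)=0$ for all $k \geq 1$. Take $K_1 = [0,1]$, $K_1' = \{0\} \cup \{1/n \, \mid \, n \in \mathbb{N}\}$, $\mu = 0$: then $N_0^{(1)} - \mu F_1$ is positive of norm one, so $\tau\bigl(N_0^{(1)}\bigr) > 0$ and no $Q_\epsilon$ exists once $\epsilon$ is small. No choice of the model operator repairs this, since a measure with support all of $[-\mu, 1-\mu]$ cannot have vanishing first and second moments. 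Herrero's description of the closure of the nilpotents is a $\mathcal{B}(\mathcal{H})$ (or purely infinite) phenomenon; its ``UHF incarnation'' exists only for spectra, like the disc centred at $0$, that support fully supported measures with all vanishing holomorphic moments.

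The paper circumvents exactly this point: it only ever asks the UHF corner for a normal $T$ with spectrum the closed $\epsilon$-disc about $0$ that is a limit of nilpotent matrices (supplied by \cite[Theorem 6.10]{Sk}), uses Lemma \ref{directsumnil2} to get $VMV^* + \mu(I_\mathfrak{A}-P) + T \in \overline{\mathcal{S}(M)}$, thereby adjoining only the disc $\{z \, \mid \, |z-\mu|\leq\epsilon\}$ to the spectrum, and then \emph{iterates}: the enlarged spectrum contains an open neighbourhood of $\mu$, hence fresh cluster points, and since $K_1$ is connected finitely many disc-attachments yield $M_0 \in \overline{\mathcal{S}(M)}$ with $d_H(\sigma(N),\sigma(M_0)) \leq \epsilon$. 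It never realizes $K_1$ exactly; it closes with Theorem \ref{nonzeroindexdistance}, bounding $dist(\mathcal{U}(N),\mathcal{U}(M_0))$ by the Hausdorff distance. Replacing your one-shot approximation of $N_0^{(1)}$ by this iterated $\epsilon$-disc filling, and finishing via Theorem \ref{nonzeroindexdistance} rather than exact spectral matching, turns your argument into the paper's.
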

\begin{proof}
If $K_1$ is a singleton,  $K'_1 = K_1$ as $K'_1$ is non-empty.  Thus $\sigma(M) = \sigma(N)$ so Theorem \ref{ausiuspi} implies $N$ and $M$ are approximately unitarily equivalent.
\par
Otherwise $K'_1$ is not a singleton.  Fix a non-unitary isometry $V \in \mathfrak{A}$ and $\epsilon > 0$.  Let $P := VV^*$ and let $\mathfrak{B}:= \overline{\bigcup_{\ell\geq1} \mathcal{M}_{2^\ell}(\mathbb{C})}$ be the unital copy of the $2^\infty$-UHF C$^*$-algebra in $(I_\mathfrak{A} - P)\mathfrak{A}(I_\mathfrak{A} - P)$ given by Lemma \ref{UHFdirectsum}.  By {\cite[Theorem 6.10]{Sk}} there exists a normal operator $T \in \mathfrak{B}$ with 
\[
\sigma(T) = \{z \in \mathbb{C} \, \mid \, |z| \leq \epsilon\},
\]
such that $T$ is a norm limit of nilpotent matrices from $\bigcup_{\ell\geq 1} \mathcal{M}_{2^\ell}(\mathbb{C}) \subseteq \mathfrak{B} \subseteq \mathfrak{A}$.  Let $\mu \in K'_1$ be any cluster point of $\sigma(M)$.  Lemma \ref{directsumnil2} implies that 
\[
VMV^* + \mu (I_\mathfrak{A} - P) + Q \in \overline{\mathcal{S}(M)}
\]
for every nilpotent matrix $Q \in \bigcup_{\ell\geq 1} \mathcal{M}_{2^\ell}(\mathbb{C}) \subseteq \mathfrak{B}$.   Since $T$ is a norm limit of nilpotent matrices from $\bigcup_{\ell\geq 1} \mathcal{M}_{2^\ell}(\mathbb{C})$, we obtain that 
\[
VMV^* + \mu (I_\mathfrak{A} - P) + T \in \overline{\mathcal{S}(M)}.
\]
\par
Let $M_1 := VMV^* + \mu (I_\mathfrak{A} - P) + T$.  As in the proof of Theorem \ref{nonzeroindexdistance}, it is easy to see that $M_1$ is a normal operator such that $\Gamma(M_1)(\lambda) = \Gamma(M)(\lambda) = \Gamma(N)(\lambda)$ for all $\lambda \notin \sigma(M_1) \cup \sigma(N)$ and $M_1$ and $N$ have equivalent common spectral projections.  
\par
Since $K_1$ is connected and $\sigma(M_1)$ contains an open neighbourhood around $\mu \in K_1$, we can repeat the above argument a finite number of times to obtain a normal operator $M_0 \in \overline{\mathcal{S}(M)}$ such that $\sigma(M_0) = K''_1 \cup K_2$ where $K''_1$ is connected, $K_1 \subseteq K''_1$,
\[
K''_1 \subseteq \{z \in \mathbb{C} \, \mid \, dist(z, K_1)\leq \epsilon\},
\]
$\Gamma(M_0)(\lambda) = \Gamma(N)(\lambda)$ for all $\lambda \notin \sigma(M_1) \cup \sigma(N)$, and $M_0$ and $N$ have equivalent common spectral projections.  Therefore Theorem \ref{nonzeroindexdistance} implies $dist(\mathcal{U}(N), \mathcal{U}(M_0)) = d_H(\sigma(N), \sigma(M_0)) \leq \epsilon$ so $dist(N, \mathcal{S}(M)) \leq \epsilon$.  Thus, as $\epsilon > 0$ was arbitrary, the result follows.
\end{proof}
We can now complete the proof of Theorem \ref{main2} using the above result.
\begin{proof}[Proof of Theorem \ref{main2}]
Let $N$ and $M$ satisfy the five conditions of Theorem \ref{main2}.  By applying Lemma \ref{fillinspec2} recursively a finite number of times, we can find a normal operator $M'$ such that $M' \in \overline{\mathcal{S}(M)}$, $\sigma(M')$ is $\sigma(M)$ unioned with a finite number of connected components of $\sigma(N)$, and $N$ and $M'$ satisfy the five conditions of Theorem \ref{main2} 
\par
Fix $\epsilon > 0$.  Since $\sigma(N)$ is compact, $\sigma(N)$ has a finite $\epsilon$-net.  Thus the normal operator $M'$ in the above paragraph can be selected with the additional requirement that $dist(\lambda, \sigma(M')) \leq 2\epsilon$ for all $\lambda \in \sigma(N)$.  By Theorem \ref{nonzeroindexdistance} $dist(N, \mathcal{U}(M')) \leq 2\epsilon$ so $dist(\mathcal{U}(N), \mathcal{S}(M)) \leq 2\epsilon$ as desired.
\end{proof}
Note that by using Corollary \ref{aueintrivialk1} instead of {\cite[Theorem 1.7]{Dad}} and Corollary \ref{hausdorffdistanceforsimilarity} instead of Theorem \ref{nonzeroindexdistance}, a proof of Theorem \ref{main2} that is independent of {\cite[Theorem 1.7]{Dad}} may be obtained for any unital, simple, purely infinite C$^*$-algebra with trivial $K_1$-group.  Similarly, using {\cite[Theorem 11.1]{BDF}} and {\cite[Theorem 1.4]{Da1}}, the proof of Theorem \ref{main2} is greatly simplified for the Calkin algebra and provides an alternate proof of Theorem \ref{calkinsim}.
\par
With the proof of Theorem \ref{main2} complete, we endeavour to prove Theorem \ref{main}.  As the proof of Theorem \ref{main2} relies on an embedding of the scalar matrices inside the C$^*$-algebra under consideration, we make the following definition.
\begin{defn}
Let $\mathfrak{A}$ be a unital C$^*$-algebra.  An operator $A \in \mathfrak{A}$ is said to be a scalar matrix in $\mathfrak{A}$ if there exists a finite dimensional C$^*$-algebra $\mathfrak{B}$ and a unital, injective $^*$-homomorphism $\pi : \mathfrak{B} \to \mathfrak{A}$ such that $A \in \pi(\mathfrak{B})$.
\end{defn}
The point of considering scalar matrices in the context of Theorem \ref{main} is the following.
\begin{prop}
\label{scalardisk2}
Let $\mathfrak{A}$ be a unital C$^*$-algebra with the three properties listed in Theorem \ref{main}.  If $N \in \mathfrak{A}$ is a normal operator with the closed unit disk as spectrum then $N$ is a norm limit of nilpotent scalar matrices from $\mathfrak{A}$.
\end{prop}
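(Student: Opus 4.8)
The plan is to reduce the statement to a single model operator furnished by the UHF constructions already available, and then to transport that model to an arbitrary $N$ by an approximate unitary equivalence. First I would record two soft closure facts about the set $\mathcal{N} \subseteq \mathfrak{A}$ of norm limits of nilpotent scalar matrices. It is norm closed by definition, and it is invariant under unitary conjugation: if $Q \in \pi(\mathfrak{B})$ is a nilpotent scalar matrix, with $\pi : \mathfrak{B} \to \mathfrak{A}$ a unital, injective $^*$-homomorphism from a finite-dimensional $\mathfrak{B}$, and $U \in \mathcal{U}(\mathfrak{A})$, then $UQU^* \in (\mathrm{Ad}\, U \circ \pi)(\mathfrak{B})$ is again a nilpotent scalar matrix. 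Consequently, if $N \sim_{au} N_0$ and $N_0 \in \mathcal{N}$, then $N \in \mathcal{N}$. Thus it suffices to produce one normal operator $N_0 \in \mathfrak{A}$ whose spectrum is the closed unit disk with $N_0 \in \mathcal{N}$, together with an approximate unitary equivalence $N \sim_{au} N_0$.

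For the approximate unitary equivalence I would invoke Corollary \ref{aueios}. Its hypotheses on $\mathfrak{A}$ (property weak (FN) and Murray--von Neumann equivalence of any two non-zero projections) are exactly two of the three standing assumptions of Theorem \ref{main}. The remaining hypothesis to verify, for any normal $N$ with $\sigma(N)$ the closed unit disk, is that $\lambda I_\mathfrak{A} - N \in \mathfrak{A}^{-1}_0$ for all $\lambda \notin \sigma(N)$; this is immediate because the complement of the disk is connected and unbounded, so the continuous path $\lambda \mapsto \lambda I_\mathfrak{A} - N$ of invertibles connects $\lambda I_\mathfrak{A} - N$ to $\mu I_\mathfrak{A} - N$ for $|\mu|$ arbitrarily large, and the latter lies in $\mathfrak{A}^{-1}_0$. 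Since this condition holds for every normal operator with disk spectrum, Corollary \ref{aueios} yields $N \sim_{au} N_0$ once $N_0$ is in hand.

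It remains to construct $N_0$. Using that every non-zero projection of $\mathfrak{A}$ is properly infinite and that any two non-zero projections are Murray--von Neumann equivalent, I would build a unital copy of the $2^\infty$-UHF C$^*$-algebra $\mathfrak{B} = \overline{\bigcup_{\ell \geq 1} \mathcal{M}_{2^\ell}(\mathbb{C})}$ inside $\mathfrak{A}$ by repeated halving: given a system of matrix units realizing $\mathcal{M}_{2^\ell}(\mathbb{C})$ unitally, proper infiniteness lets each diagonal projection be split into two non-zero subprojections, which are Murray--von Neumann equivalent by the third hypothesis, refining the system to $\mathcal{M}_{2^{\ell+1}}(\mathbb{C})$. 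Let $\iota : \mathfrak{B} \to \mathfrak{A}$ be the resulting unital embedding. By {\cite[Theorem 6.10]{Sk}} (after rescaling the radius to $1$) there is a normal $T \in \mathfrak{B}$ with $\sigma(T) = \{z \in \mathbb{C} \mid |z| \leq 1\}$ that is a norm limit of nilpotent matrices from $\bigcup_{\ell \geq 1} \mathcal{M}_{2^\ell}(\mathbb{C})$. Setting $N_0 := \iota(T)$, spectral permanence for the unital inclusion $\iota(\mathfrak{B}) \subseteq \mathfrak{A}$ gives $\sigma_\mathfrak{A}(N_0) = \sigma_\mathfrak{B}(T) = \{z \mid |z| \leq 1\}$, while the images under $\iota$ of the approximating nilpotents are nilpotent scalar matrices (each lies in $\iota(\mathcal{M}_{2^\ell}(\mathbb{C}))$, the image of a finite-dimensional algebra under the unital injective map $\iota$). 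Hence $N_0 \in \mathcal{N}$, and combining with the previous paragraph finishes the proof.

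The genuinely difficult analytic input---that a normal operator with disk spectrum is approximable in norm by nilpotents at all---is outsourced entirely to {\cite[Theorem 6.10]{Sk}}. The main point to handle with care here is therefore the transport step: verifying that the UHF model embeds \emph{unitally} (so that the approximants remain scalar matrices in the sense of the definition) and that the spectrum is preserved under the inclusion, after which Corollary \ref{aueios} and the closure properties of $\mathcal{N}$ do the rest.
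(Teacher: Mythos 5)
Your proposal is correct and follows essentially the same route as the paper: embed the $2^\infty$-UHF algebra unitally using properties (2) and (3) of Theorem \ref{main}, obtain the model operator with disk spectrum from {\cite[Theorem 6.10]{Sk}}, and transfer to an arbitrary $N$ via Corollary \ref{aueios}. The only difference is that you spell out the details the paper leaves implicit (the triviality of the index data for disk-spectrum operators and the invariance of the approximating set under approximate unitary equivalence), which is fine.
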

\begin{proof}
It is easy to see the second and third assumptions in Theorem \ref{main} imply that the $2^\infty$-UHF C$^*$-algebra has a unital, faithful embedding into $\mathfrak{A}$. Therefore, by {\cite[Theorem 6.10]{Sk}}, $\mathfrak{A}$ has a normal operator $N_0$ with the closed unit disk as spectrum that is a norm limit of nilpotent scalar matrices from $\mathfrak{A}$.  Since every two normal operators with spectrum equal to the closed unit disk are approximately unitarily equivalent by Corollary \ref{aueios} the result follows.
\end{proof}
Using the ideas contained in the proof of Lemma \ref{directsumnil2}, it is possible to prove the following.
\begin{lem}
\label{directsumnil}
Let $\mathfrak{A}$ be a unital C$^*$-algebra with the following conditions;
\begin{enumerate}
	\item there exits a unital, injective $^*$-homomorphism $\pi : \mathfrak{A} \oplus \mathfrak{A} \to \mathfrak{A}$, and
	\item if $N_1, N_2 \in \mathfrak{A}$ are normal operators with $\lambda I_\mathfrak{A} - N_q \in \mathfrak{A}^{-1}_0$ for all $\lambda \notin \sigma(N_q)$ and $q \in \{1,2\}$, $N_1 \sim_{au} N_2$ if and only if $\sigma(N_1) = \sigma(N_2)$.
	\end{enumerate}
Let $M \in \mathfrak{A}$ be a normal operator with $\lambda I_\mathfrak{A} - M \in \mathfrak{A}^{-1}_0$ for all $\lambda \notin \sigma(M)$, let $\mu \in \sigma(M)$ be a cluster point of $\sigma(M)$, and let $Q \in \mathfrak{A}$ be a nilpotent scalar matrix.  Then $\pi(M \oplus (\mu I + Q)) \in \overline{\mathcal{S}(M)}$.
\end{lem}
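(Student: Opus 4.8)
The plan is to follow the proof of Lemma~\ref{directsumnil2} almost verbatim, substituting the homomorphism $\pi$ for the isometry $V$ and the UHF embedding of Lemma~\ref{UHFdirectsum}, and substituting hypothesis~(2) for the appeal to {\cite[Theorem 1.7]{Dad}}. First I would set $P := \pi(I_\mathfrak{A} \oplus 0)$, so that $I_\mathfrak{A} - P = \pi(0 \oplus I_\mathfrak{A})$, the element $\pi(M \oplus 0) \in P\mathfrak{A}P$ plays the role of $VMV^*$, and
\[
\pi(M \oplus (\mu I + Q)) = \pi(M \oplus 0) + \mu(I_\mathfrak{A} - P) + \pi(0 \oplus Q).
\]
Since $Q$ is a nilpotent scalar matrix, it lies in the image of a unital injective $^*$-homomorphism $\pi_0 : \mathcal{M}_n(\mathbb{C}) \to \mathfrak{A}$, and after conjugating $\pi_0$ by a unitary I may assume $Q = \pi_0(N_0)$ with $N_0$ strictly upper triangular. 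Composing with $x \mapsto \pi(0 \oplus x)$ transports this matrix structure into the corner $(I_\mathfrak{A} - P)\mathfrak{A}(I_\mathfrak{A} - P)$: writing $\{F_{ij}\}$ for the images of the standard matrix units, we get $\sum_k F_{kk} = I_\mathfrak{A} - P$ and $\pi(0 \oplus Q) = \sum_{i<j} a_{ij} F_{ij}$ strictly upper triangular. As $\mu$ is a cluster point of $\sigma(M)$, I fix distinct scalars $\mu_1, \mu_2, \ldots \in \sigma(M)$ with $\mu_j \to \mu$ and set $T_q := \sum_{k=1}^n \mu_{q+k-1} F_{kk}$ for each $q$.

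The crucial step is to show that $M_q := \pi(M \oplus 0) + T_q = \pi(M \oplus D_q)$, where $D_q = \pi_0(\mathrm{diag}(\mu_q, \ldots, \mu_{q+n-1}))$, is approximately unitarily equivalent to $M$. Its spectrum is $\sigma(M) \cup \{\mu_q, \ldots, \mu_{q+n-1}\} = \sigma(M)$, so by hypothesis~(2) it suffices to check that $\lambda I_\mathfrak{A} - M_q \in \mathfrak{A}^{-1}_0$ for all $\lambda \notin \sigma(M)$. For such $\lambda$,
\[
\lambda I_\mathfrak{A} - M_q = \pi\bigl((\lambda I_\mathfrak{A} - M) \oplus (\lambda I_\mathfrak{A} - D_q)\bigr),
\]
where $\lambda I_\mathfrak{A} - M \in \mathfrak{A}^{-1}_0$ by hypothesis on $M$, and $\lambda I_\mathfrak{A} - D_q$ lies in $\mathfrak{A}^{-1}_0$ because it is the image under the unital $^*$-homomorphism $\pi_0$ of an element of the connected group $GL_n(\mathbb{C})$. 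Since $(\mathfrak{A} \oplus \mathfrak{A})^{-1}_0 = \mathfrak{A}^{-1}_0 \times \mathfrak{A}^{-1}_0$ and a continuous unital homomorphism carries the identity component of the invertibles into the identity component, $\lambda I_\mathfrak{A} - M_q \in \mathfrak{A}^{-1}_0$. Hypothesis~(2) then gives $M_q \sim_{au} M$, whence $M_q \in \overline{\mathcal{S}(M)}$. I expect this to be the main obstacle, in the sense that the entire argument rests on the direct-sum construction preserving the condition $\lambda I_\mathfrak{A} - M_q \in \mathfrak{A}^{-1}_0$ so that the classification is applicable; the connectedness of $GL_n(\mathbb{C})$ and the compatibility of $\pi$ with connected components of invertibles are exactly what make this go through.

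Finally I would absorb the nilpotent part and pass to the limit. Put $R_q := \pi(M \oplus 0) + T_q + \pi(0 \oplus Q)$. Inside the corner, $T_q + \pi(0 \oplus Q)$ is upper triangular with the distinct diagonal entries $\mu_q, \ldots, \mu_{q+n-1}$, so Corollary~\ref{nbynsim} yields an invertible $W$ of $(I_\mathfrak{A} - P)\mathfrak{A}(I_\mathfrak{A} - P)$ with $W\bigl(T_q + \pi(0 \oplus Q)\bigr)W^{-1} = T_q$. Since $\pi(M \oplus 0) \in P\mathfrak{A}P$ is killed on both sides by elements of the complementary corner, $P + W$ is invertible in $\mathfrak{A}$ with inverse $P + W^{-1}$ and
\[
(P + W)R_q(P + W)^{-1} = \pi(M \oplus 0) + T_q = M_q.
\]
Hence $R_q \in \mathcal{S}(M_q) \subseteq \overline{\mathcal{S}(M_q)} \subseteq \overline{\mathcal{S}(M)}$ by the transitivity of closed similarity orbits recorded in Section~\ref{sec:CLOSEDUNITARYORBITS}. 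As $q \to \infty$ we have $T_q \to \mu(I_\mathfrak{A} - P)$, so $R_q \to \pi(M \oplus 0) + \mu(I_\mathfrak{A} - P) + \pi(0 \oplus Q) = \pi(M \oplus (\mu I + Q))$, and since $\overline{\mathcal{S}(M)}$ is closed the limit lies in $\overline{\mathcal{S}(M)}$, completing the argument.
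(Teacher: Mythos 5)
Your proof is correct and is essentially the argument the paper intends: it carries out the proof of Lemma~\ref{directsumnil2} with $\pi$ in place of the isometry and hypothesis~(2) in place of Dadarlat's theorem, which is exactly the adaptation the paper sketches (the paper only remarks that the lemma follows from the ideas of Lemma~\ref{directsumnil2}, so your write-up supplies the omitted details, in particular the verification that $\lambda I_\mathfrak{A}-M_q\in\mathfrak{A}^{-1}_0$ and the corner-wise application of Corollary~\ref{nbynsim}).
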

By using similar ideas to the proof of Theorem \ref{main2} and by using the following lemma, the proof of Theorem \ref{main} is also complete.
\begin{lem}
\label{fillinspec}
Let $\mathfrak{A}$ be a unital C$^*$-algebra with the three properties listed in Theorem \ref{main}.  Let $N,M \in \mathfrak{A}$ be normal operators with $\lambda I_\mathfrak{A} - N \in \mathfrak{A}^{-1}_0$ for all $\lambda \notin \sigma(N)$ and $\lambda I_\mathfrak{A} - M \in \mathfrak{A}^{-1}_0$ for all $\lambda \notin \sigma(M)$.  Let $\{K_\lambda\}_{\Lambda}$ be the connected components of $\sigma(N)$.  Suppose 
\[
\sigma(M) = \left(\bigcup_{\lambda \in \Lambda\setminus \{\lambda_0\}} K_\lambda\right) \cup K_0
\]
where $K_0 \subseteq K_{\lambda_0}$.  If $K_0$ contains a cluster point of $\sigma(M)$ then $N \in \overline{\mathcal{S}(M)}$.
\end{lem}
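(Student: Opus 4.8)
The plan is to mirror the proof of Lemma \ref{fillinspec2}, replacing each tool used there by its counterpart available under the hypotheses of Theorem \ref{main}. First I would check that Lemma \ref{directsumnil} applies to $\mathfrak{A}$: since $I_\mathfrak{A}$ is properly infinite and any two nonzero projections are Murray-von Neumann equivalent, a non-unitary isometry $V$ exists, and writing $I_\mathfrak{A} = V_1V_1^* + V_2V_2^*$ for isometries $V_1,V_2$ (with ranges $VV^*$ and $I_\mathfrak{A}-VV^*$) yields a unital injective $^*$-homomorphism $\pi : \mathfrak{A}\oplus\mathfrak{A}\to\mathfrak{A}$, which is hypothesis (1) of Lemma \ref{directsumnil}; hypothesis (2) is exactly Corollary \ref{aueios}. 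I would then dispose of the degenerate case: if $K_{\lambda_0}$ is a singleton then $K_0 = K_{\lambda_0}$ (as $K_0 \neq \emptyset$), so $\sigma(M) = \sigma(N)$ and Corollary \ref{aueios} gives $N \sim_{au} M$, whence $N \in \overline{\mathcal{U}(M)} \subseteq \overline{\mathcal{S}(M)}$.

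For the main case, assume $K_0$ (hence $K_{\lambda_0}$) is not a singleton and fix $\epsilon > 0$. By Proposition \ref{scalardisk2} (rescaled) there is a normal $T \in \mathfrak{A}$ with $\sigma(T) = \{z \in \mathbb{C} : |z| \leq \epsilon\}$ that is a norm limit of nilpotent scalar matrices. Choosing a cluster point $\mu \in K_0$ of $\sigma(M)$, Lemma \ref{directsumnil} gives $\pi(M \oplus (\mu I_\mathfrak{A} + Q)) \in \overline{\mathcal{S}(M)}$ for every nilpotent scalar matrix $Q$; letting $Q \to T$ and using that $\overline{\mathcal{S}(M)}$ is closed and $\pi$ is continuous, I obtain $M_1 := \pi(M \oplus (\mu I_\mathfrak{A} + T)) \in \overline{\mathcal{S}(M)}$, a normal operator with $\sigma(M_1) = \sigma(M) \cup \{z : |z-\mu| \leq \epsilon\}$. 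The key bookkeeping point is to verify that $M_1$ again satisfies the index hypothesis: for $\lambda \notin \sigma(M_1)$ one has $\lambda I_\mathfrak{A} - M_1 = \pi\big((\lambda I_\mathfrak{A} - M) \oplus ((\lambda-\mu) I_\mathfrak{A} - T)\big)$, where the first summand lies in $\mathfrak{A}^{-1}_0$ by hypothesis on $M$ and the second is an invertible limit of the elements $(\lambda-\mu)I_\mathfrak{A} - Q$, each of which lies in $\mathfrak{A}^{-1}_0$ since it sits in a finite-dimensional subalgebra whose invertibles are connected. As $\pi$ carries $(\mathfrak{A}\oplus\mathfrak{A})^{-1}_0 = \mathfrak{A}^{-1}_0 \oplus \mathfrak{A}^{-1}_0$ into $\mathfrak{A}^{-1}_0$, it follows that $\lambda I_\mathfrak{A} - M_1 \in \mathfrak{A}^{-1}_0$.

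Next I would iterate. Since $K_{\lambda_0}$ is connected and compact and $\sigma(M_1)$ now contains a genuine disk around $\mu$ (so every point of that disk is a cluster point of $\sigma(M_1)$), finitely many further applications of the previous paragraph fill $K_{\lambda_0}$ to within $\epsilon$: they produce a normal $M_0 \in \overline{\mathcal{S}(M)}$ satisfying the index hypothesis with $\sigma(M_0) = \big(\bigcup_{\lambda \neq \lambda_0} K_\lambda\big) \cup K''$, where $K''$ is connected, $K_{\lambda_0} \subseteq K'' \subseteq \{z : dist(z, K_{\lambda_0}) \leq \epsilon\}$. Consequently $\sigma(N) \subseteq \sigma(M_0)$ and $d_H(\sigma(N), \sigma(M_0)) \leq \epsilon$. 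Because $\mathfrak{A}$ has property weak (FN), every nonzero projection is properly infinite, any two nonzero projections are Murray-von Neumann equivalent, and both $N$ and $M_0$ have trivial index, Proposition \ref{hausdorffdist} applies and yields $dist(\mathcal{U}(N), \mathcal{U}(M_0)) = d_H(\sigma(N), \sigma(M_0)) \leq \epsilon$. Since $M_0 \in \overline{\mathcal{S}(M)}$ and $\overline{\mathcal{S}(M)}$ is invariant under unitary conjugation, this gives $dist(N, \overline{\mathcal{S}(M)}) \leq \epsilon$, and letting $\epsilon \to 0$ yields $N \in \overline{\mathcal{S}(M)}$.

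I expect the main obstacle to lie in the middle two steps: arranging the iteration so that at every stage the operator produced is still normal, still lies in $\overline{\mathcal{S}(M)}$, still has the trivial-index property (so that both Lemma \ref{directsumnil} and, at the end, Proposition \ref{hausdorffdist} remain applicable), and uses only centers that are authentic cluster points of the current spectrum. The disk-filling geometry is identical to that of Lemma \ref{fillinspec2}, so the genuinely new content is the propagation of the condition $\lambda I_\mathfrak{A} - (\,\cdot\,) \in \mathfrak{A}^{-1}_0$ through each application of $\pi$, which is where the proper-infiniteness and the scalar-matrix (UHF-type) structure are essential.
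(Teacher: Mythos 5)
Your proposal is correct and follows essentially the same route as the paper, which likewise reduces the lemma to the argument of Lemma \ref{fillinspec2} with direct sums (via the embedding $\mathfrak{A}\oplus\mathfrak{A}\to\mathfrak{A}$ required by Lemma \ref{directsumnil}) in place of non-unitary isometries and with Proposition \ref{hausdorffdist} supplying the final distance estimate. Your explicit verification that the condition $\lambda I_\mathfrak{A} - (\,\cdot\,) \in \mathfrak{A}^{-1}_0$ propagates through each application of $\pi$ is a detail the paper leaves implicit, and it is handled correctly.
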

\begin{proof}
The proof of this lemma follows the proof of Lemma \ref{fillinspec2} by using direct sums instead of non-unitary isometries and an application of Proposition \ref{hausdorffdist} provided that Lemma \ref{directsumnil} applies.  Note that the second and third assumptions of Theorem \ref{main} imply that the first assumption of Lemma \ref{directsumnil} holds and Corollary \ref{aueios} implies that the second assumption of Lemma \ref{directsumnil} holds.
\end{proof}
With the proofs of Theorem \ref{main2} and Theorem \ref{main} complete, we will use said theorems to classify when a normal operator is a limit of nilpotents in these C$^*$-algebras.  Thus Corollary \ref{nilpotentinuspi} provides another proof (although a more complicated proof) to {\cite[Theorem 2.8]{Sk}}.  Moreover Corollary \ref{FNnil} has slightly weaker conditions to any result given in \cite{Sk} (that is, there should exists C$^*$-algebras satisfying the assumptions of the following theorem that are not studied in \cite{Sk} although the author is not aware of them).  However, we note the proof of {\cite[Theorem 2.8]{Sk}} can be adapted to this setting.  These proofs are based on the proof of {\cite[Proposition 5.6]{He}}.
\begin{cor}
\label{nilpotentinuspi}
Let $\mathfrak{A}$ be a unital, simple, purely infinite C$^*$-algebra.  A normal operator $N \in \mathfrak{A}$ is a norm limits of nilpotent operators from $\mathfrak{A}$ if and only if $0 \in \sigma(N)$, $\sigma(N)$ is connected, and $\Gamma(N)$ is trivial.
\end{cor}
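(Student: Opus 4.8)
The plan is to write $\mathcal{N}$ for the set of nilpotent operators in $\mathfrak{A}$ and to prove $N\in\overline{\mathcal{N}}$ if and only if the three conditions hold. Two elementary observations will be used repeatedly. First, if $M\in\overline{\mathcal{N}}$ then $\overline{\mathcal{S}(M)}\subseteq\overline{\mathcal{N}}$: writing $M=\lim_k Q_k$ with $Q_k\in\mathcal{N}$, each $VMV^{-1}=\lim_k VQ_kV^{-1}$ is a limit of nilpotents, and since $\overline{\mathcal{N}}$ is closed, every element of $\overline{\mathcal{S}(M)}$ lies in $\overline{\mathcal{N}}$. Second, if $\sigma(N)=\{0\}$ then $N=0\in\mathcal{N}$, so I may assume $\sigma(N)$ has more than one point; being a nondegenerate continuum it then has no isolated points, and in particular $0$ is a cluster point of $\sigma(N)$.

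For necessity I would suppose $N=\lim_n Q_n$ with $Q_n\in\mathcal{N}$. Since each $Q_n$ is non-invertible and the non-invertibles are closed, $0\in\sigma(N)$. For $\lambda\notin\sigma(N)$ we have $\lambda\neq 0$, so $t\mapsto\lambda I_\mathfrak{A}-tQ_n$ is a path in $\mathfrak{A}^{-1}$ (as $\sigma(tQ_n)=\{0\}$) joining $\lambda I_\mathfrak{A}\in\mathfrak{A}^{-1}_0$ to $\lambda I_\mathfrak{A}-Q_n$; hence $\lambda I_\mathfrak{A}-Q_n\in\mathfrak{A}^{-1}_0$, and since $\mathfrak{A}^{-1}_0$ is closed in $\mathfrak{A}^{-1}$ and $\lambda I_\mathfrak{A}-N$ is an invertible limit of such elements, $\lambda I_\mathfrak{A}-N\in\mathfrak{A}^{-1}_0$, so $\Gamma(N)(\lambda)=0$ and $\Gamma(N)$ is trivial. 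The crux of this direction is connectedness. If $\sigma(N)=A\sqcup B$ with $A,B$ disjoint, non-empty, compact and $0\in A$, I would choose $f$ analytic near $\sigma(N)$ with $f\equiv 0$ near $A$ and $f\equiv 1$ near $B$; then $E:=f(N)$ is a non-zero projection, whereas $f(Q_n)=0$ because $f$ vanishes on a neighbourhood of $\sigma(Q_n)=\{0\}\subseteq A$. Continuity of the analytic functional calculus along a fixed contour around $\sigma(N)$ gives $f(Q_n)\to f(N)$, forcing $E=0$, a contradiction; hence $\sigma(N)$ is connected.

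For sufficiency I assume the three conditions and that $\sigma(N)$ is a nondegenerate continuum through $0$. Fixing $\epsilon>0$, I would pick a finite $\epsilon$-net $\{c_1=0,\dots,c_m\}\subseteq\sigma(N)$ and set $K^{+}:=\bigcup_i\{z:|z-c_i|\le\epsilon\}$. Since $\sigma(N)$ is connected, $K^{+}$ is a connected compact set with $\sigma(N)\subseteq K^{+}$, $0\in K^{+}$, and $d_H(\sigma(N),K^{+})\le\epsilon$. By {\cite[Theorem 6.10]{Sk}} (as in the proof of Lemma \ref{fillinspec2}) there is a normal $M_0\in\mathfrak{A}$ with $\sigma(M_0)=\{z:|z|\le\epsilon\}\subseteq K^{+}$ that is a norm limit of nilpotent scalar matrices, so $M_0\in\overline{\mathcal{N}}$, with $\Gamma(M_0)$ trivial. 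Exactly as in the proof of Theorem \ref{nonzeroindexdistance} one constructs a normal $W\in\mathfrak{A}$ with $\sigma(W)=K^{+}$ and $\Gamma(W)$ trivial. Because $K^{+}$ is connected, every $\{0,1\}$-valued function analytic near $K^{+}$ is constant, so $W$ and $M_0$, as well as $W$ and $N$, automatically have equivalent common spectral projections. Applying Theorem \ref{main2} to the pair $(W,M_0)$ (conditions (1)--(5) are immediate, the needed cluster point being any interior point of the disk) yields $W\in\overline{\mathcal{S}(M_0)}\subseteq\overline{\mathcal{N}}$.

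It remains to transfer this back to $N$. Since $\sigma(N)\subseteq\sigma(W)=K^{+}$ meets the unique component of $\sigma(W)$ and the index and spectral-projection conditions hold, Theorem \ref{nonzeroindexdistance} applied to $(W,N)$ gives $dist(\mathcal{U}(W),\mathcal{U}(N))=d_H(\sigma(W),\sigma(N))\le\epsilon$, so there is a unitary $U$ with $\|UWU^{*}-N\|\le 2\epsilon$; as $UWU^{*}\in\overline{\mathcal{N}}$, this gives $dist(N,\overline{\mathcal{N}})\le 2\epsilon$, and letting $\epsilon\to 0$ yields $N\in\overline{\mathcal{N}}$. The main obstacle is precisely this last transfer: $\sigma(N)$ may be too thin to contain any disk, so one cannot take the limit-of-nilpotents seed to have spectrum inside $\sigma(N)$. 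The resolution is to overshoot to the slightly fattened connected set $K^{+}\supseteq\sigma(N)$, where the disk seed $M_0$ fits and Theorem \ref{main2} produces a normal limit-of-nilpotents $W$, and then to correct the overshoot with the unitary-orbit distance estimate of Theorem \ref{nonzeroindexdistance}, which is available exactly because $\sigma(N)\subseteq\sigma(W)$.
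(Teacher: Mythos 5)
Your proof is correct and follows essentially the same route as the paper's: the substantive direction rests on the same three ingredients (the $\epsilon$-disk operator from {\cite[Theorem 6.10]{Sk}} as the seed limit of nilpotents, Theorem \ref{main2} to place a connected-spectrum intermediate normal operator in its closed similarity orbit, and the unitary-orbit distance estimate of Theorem \ref{nonzeroindexdistance} to absorb the $\epsilon$-error), differing only in that your intermediate operator fattens all of $\sigma(N)$ to $K^{+}$ rather than adjoining a single $\epsilon$-disk at $0$ as the paper does. The necessity direction, which the paper delegates to {\cite[Lemma 1.3]{Sk}} and {\cite[Lemma 2.7]{Sk}}, you prove directly and correctly.
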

\begin{proof}
The requirements that $\sigma(N)$ is connected and contains zero follows by {\cite[Lemma 1.3]{Sk}}.  The condition that $\Gamma(N)$ is trivial follows from {\cite[Lemma 2.7]{Sk}}.
\par
Suppose $N \in \mathfrak{A}$ is a normal operator such that $0 \in \sigma(N)$, $\sigma(N)$ is connected, and $\Gamma(N)$ is trivial.  Let $\epsilon > 0$ and fix a non-unitary isometry $V \in \mathfrak{A}$.  Let $P := VV^*$ and let $\mathfrak{B}:= \overline{\bigcup_{\ell\geq1} \mathcal{M}_{2^\ell}(\mathbb{C})}$ be the unital copy of the $2^\infty$-UHF C$^*$-algebra in $(I_\mathfrak{A} - P)\mathfrak{A}(I_\mathfrak{A} - P)$ given by Lemma \ref{UHFdirectsum}.  By {\cite[Theorem 6.10]{Sk}} there exists a normal operator $T \in \mathfrak{B}$ with 
\[
\sigma(T) = \{z \in \mathbb{C} \, \mid \, |z| \leq \epsilon\}
\]
such that $T$ is a norm limit of nilpotent matrices from $\bigcup_{\ell\geq 1} \mathcal{M}_{2^\ell}(\mathbb{C}) \subseteq \mathfrak{B} \subseteq \mathfrak{A}$.
\par
Let $M := VNV^* + T \in \mathfrak{A}$.  Clearly $M$ is a normal operator such that $\sigma(M) = \sigma(N) \cup \sigma(T)$, $M$ and $N$ have equivalent common spectral projections, and $\Gamma(M)$ is trivial as in the proof of Theorem \ref{nonzeroindexdistance}.  Therefore Corollary \ref{hausdorffdistanceforsimilarity} implies that
\[
dist(\mathcal{U}(N), \mathcal{U}(M)) \leq \epsilon.
\]
However, we note that $\Gamma(T)$ is trivial when we view $T$ as a normal element in $\mathfrak{A}$.  Moreover, as $\sigma(N)$ is connected and contains zero, $\sigma(M)$ is connected and contains $\sigma(T)$.  Thus Theorem \ref{main2} (where conditions (4) and (5) are easily satisfied) implies that $M \in \overline{\mathcal{S}(T)}$ so
\[
dist(N, \mathcal{S}(T)) \leq \epsilon.
\]
However, as $T$ is a norm limits of nilpotent operators from $\mathfrak{B} \subseteq \mathfrak{A}$, the above inequality implies $N$ is within $2\epsilon$ of a nilpotent operator from $\mathfrak{A}$.  Thus the proof is complete.
\end{proof}
\begin{cor}
\label{FNnil}
Let $\mathfrak{A}$ be a unital, separable C$^*$-algebra with the three properties listed in Theorem \ref{main}.  A normal operator $N \in \mathfrak{A}$ is a norm limits of nilpotent operators from $\mathfrak{A}$ if and only if $0 \in \sigma(N)$, $\sigma(N)$ is connected, and $\lambda I_\mathfrak{A} - N \in \mathfrak{A}^{-1}_0$ for all $\lambda \notin \sigma(N)$.
\end{cor}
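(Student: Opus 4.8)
The plan is to transcribe the proof of Corollary~\ref{nilpotentinuspi}, replacing each device special to the simple, purely infinite setting by its counterpart available under the hypotheses of Theorem~\ref{main}. The non-unitary isometry $V$ and the corner $(I_\mathfrak{A}-P)\mathfrak{A}(I_\mathfrak{A}-P)$ are replaced by a unital, injective $^*$-homomorphism $\pi:\mathfrak{A}\oplus\mathfrak{A}\to\mathfrak{A}$ (which exists by properties (2) and (3), as noted in the proof of Lemma~\ref{fillinspec}); the disk-shaped limit of nilpotents supplied by {\cite[Theorem 6.10]{Sk}} is replaced by Proposition~\ref{scalardisk2}; the distance estimate of Corollary~\ref{hausdorffdistanceforsimilarity} is replaced by Proposition~\ref{hausdorffdist}; and the appeal to Theorem~\ref{main2} is replaced by Theorem~\ref{main}. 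Separability enters only through the cited results of \cite{Sk}.

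For necessity I would argue as in Corollary~\ref{nilpotentinuspi}. Since every nilpotent is non-invertible, $0$ lies in the spectrum of each approximant and hence in $\sigma(N)$, and the connectedness of $\sigma(N)$ is {\cite[Lemma 1.3]{Sk}}. For the third condition, observe that if $Q$ is nilpotent and $\lambda\neq 0$ then $t\mapsto \lambda I_\mathfrak{A}-tQ$ is a path of invertibles joining $\lambda I_\mathfrak{A}-Q$ to $\lambda I_\mathfrak{A}$, and $\lambda I_\mathfrak{A}$ is joined to $I_\mathfrak{A}$ through nonzero scalar multiples of the identity; thus $\lambda I_\mathfrak{A}-Q\in\mathfrak{A}^{-1}_0$. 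As $\mathfrak{A}^{-1}_0$ is open and $\lambda I_\mathfrak{A}-N$ is a limit of such elements whenever $\lambda\notin\sigma(N)$, it follows that $\lambda I_\mathfrak{A}-N\in\mathfrak{A}^{-1}_0$.

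For sufficiency, fix $\epsilon>0$. By Proposition~\ref{scalardisk2}, after rescaling, there is a normal $T\in\mathfrak{A}$ with $\sigma(T)=\{z : |z|\le\epsilon\}$ that is a norm limit of nilpotent scalar matrices; by the path argument above, $\lambda I_\mathfrak{A}-T\in\mathfrak{A}^{-1}_0$ for all $\lambda\notin\sigma(T)$. Set $M:=\pi(N\oplus T)$. Then $M$ is normal with $\sigma(M)=\sigma(N)\cup\sigma(T)$, which is connected because $0$ lies in both $\sigma(N)$ and $\sigma(T)$, and $\lambda I_\mathfrak{A}-M\in\mathfrak{A}^{-1}_0$ for $\lambda\notin\sigma(M)$ since $\pi$ carries a path of invertibles joining $(\lambda I-N)\oplus(\lambda I-T)$ to the identity of $\mathfrak{A}\oplus\mathfrak{A}$ onto one in $\mathfrak{A}$. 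Because $0\in\sigma(N)$ we have $d_H(\sigma(N),\sigma(M))\le\epsilon$, so Proposition~\ref{hausdorffdist} gives $dist(\mathcal{U}(N),\mathcal{U}(M))=d_H(\sigma(N),\sigma(M))\le\epsilon$. On the other hand $\sigma(T)\subseteq\sigma(M)$, the single (connected) component of $\sigma(M)$ meets $\sigma(T)$ and contains the disk $\sigma(T)$, all of whose points are non-isolated, and the required index conditions hold, so Theorem~\ref{main} yields $M\in\overline{\mathcal{S}(T)}$. Since $T$ is a limit of nilpotents, $\overline{\mathcal{S}(T)}$ is contained in the norm closure of the nilpotents and contains $\mathcal{U}(M)$; hence the estimate $dist(\mathcal{U}(N),\mathcal{U}(M))\le\epsilon$ places $N$ within $\epsilon$ of $\overline{\mathcal{S}(T)}$ and therefore within $2\epsilon$ of an actual nilpotent. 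Letting $\epsilon\to 0$ finishes the proof.

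The main obstacle I anticipate is the bookkeeping of the condition $\lambda I_\mathfrak{A}-(\cdot)\in\mathfrak{A}^{-1}_0$ through the embedding $\pi$ and through the nilpotent approximants of $T$: one must verify that $\pi$ preserves membership in the connected component of the identity in $\mathfrak{A}^{-1}$ and that $\lambda I_\mathfrak{A}-T$ inherits triviality from its nilpotent approximants. Once these are checked, the remaining verifications are routine, as the connectedness of $\sigma(M)$ and the non-isolated-point hypothesis of Theorem~\ref{main} are both immediate consequences of $\sigma(T)$ being a full disk.
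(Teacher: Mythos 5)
Your proposal is correct and follows essentially the same route as the paper, which itself proves Corollary \ref{FNnil} by rerunning the proof of Corollary \ref{nilpotentinuspi} with exactly the substitutions you list: direct sums via $\pi:\mathfrak{A}\oplus\mathfrak{A}\to\mathfrak{A}$ in place of the isometry and corner, Proposition \ref{scalardisk2} in place of the disk operator from the UHF subalgebra, Proposition \ref{hausdorffdist} in place of Corollary \ref{hausdorffdistanceforsimilarity}, and Theorem \ref{main} in place of Theorem \ref{main2}. Your expanded verifications (the path $t\mapsto\lambda I_\mathfrak{A}-tQ$ for the index condition, and the bookkeeping of $\mathfrak{A}^{-1}_0$ through $\pi$) are sound and simply make explicit what the paper leaves to the reader.
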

\begin{proof}
The proof of this result follows the proof of Corollary \ref{nilpotentinuspi} by using direct sums instead of non-unitary isometries (as in Lemma \ref{directsumnil}), Proposition \ref{hausdorffdist} instead of Corollary \ref{hausdorffdistanceforsimilarity}, Theorem \ref{main} instead of Theorem \ref{main2}, and Proposition \ref{scalardisk2}.
\end{proof}
To conclude this paper we will briefly discuss closed similarity orbits of normal operators in von Neumann algebras.  We recall that \cite{Sh} completely classifies when two normal operators are approximately unitarily equivalent in von Neumann algebras.  Furthermore Theorem \ref{main} completely determines when one normal operator is in the closed similarity orbit of another normal operator in type III factors with separable predual.  Thus it is natural to ask whether a generalization of Theorem \ref{main} to type II factors may be obtained.
\par
Unfortunately the existence of a faithful, normal, tracial state on type II$_1$ factors inhibits when a normal operator can be in the closed similarity orbit of another normal operator.  Indeed suppose $\mathfrak{M}$ is a type II$_1$ factor and let $\tau$ be the faithful, normal, tracial state on $\mathfrak{M}$.  If $N, M \in \mathfrak{M}$ are such that $N \in \overline{\mathcal{S}(M)}$, it is trivial to verify that $\tau(p(N)) = \tau(p(M))$ for all polynomials $p$ in one variable.  In particular if $N, M \in \mathfrak{M}$ are self-adjoint and $N \in \overline{\mathcal{S}(M)}$ we obtain that $\tau(f(N)) = \tau(f(M))$ for all continuous functions on $\sigma(N) \cup \sigma(N)$ and, as $\tau$ is faithful and normal, this implies that $N$ and $M$ must have the same spectral distribution.  Therefore, if $N, M \in \mathfrak{M}$ are self-adjoint operators, $\sigma(M) = [0,\frac{1}{2}]$, and $\sigma(N) = [0,1]$, then, unlike in $\mathcal{B}(\mathcal{H})$, $N \notin \overline{\mathcal{S}(M)}$.  Combining the above arguments and {\cite[Theorem 1.3]{Sh}} we have the following result.
\begin{prop}
Let $\mathfrak{M}$ be a type II$_1$ factor.  If $N, M \in \mathfrak{M}$ are self-adjoint operators and $N \in \overline{\mathcal{S}(M)}$, then $N \sim_{au} M$.
\end{prop}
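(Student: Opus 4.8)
The plan is to reduce the statement to Sherman's classification {\cite[Theorem 1.3]{Sh}}, which for a type II$_1$ factor $\mathfrak{M}$ with faithful, normal, tracial state $\tau$ specializes to the assertion that two self-adjoint operators are approximately unitarily equivalent precisely when their spectral distributions (the pushforward measures $\tau \circ E_N$ and $\tau \circ E_M$ of $\tau$ under the respective spectral measures $E_N, E_M$) coincide. Thus it suffices to show that the hypothesis $N \in \overline{\mathcal{S}(M)}$ forces $\tau \circ E_N = \tau \circ E_M$.

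First I would establish the trace identity $\tau(p(N)) = \tau(p(M))$ for every polynomial $p$ in one variable. Choosing invertibles $V_n \in \mathfrak{M}^{-1}$ with $\left\|N - V_n M V_n^{-1}\right\| \to 0$, the fact that conjugation is an algebra homomorphism gives $p(V_n M V_n^{-1}) = V_n\, p(M)\, V_n^{-1}$, whence the trace property of $\tau$ yields $\tau(p(V_n M V_n^{-1})) = \tau(p(M))$ for all $n$. Since $V_n M V_n^{-1} \to N$ in norm we have $p(V_n M V_n^{-1}) \to p(N)$ in norm, and as $\tau$ is norm-continuous (being a state), passing to the limit gives $\tau(p(N)) = \tau(p(M))$.

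Next, because $N$ and $M$ are self-adjoint their spectra are compact subsets of $\mathbb{R}$, so by the Stone--Weierstrass theorem the polynomials are sup-norm dense in $C(\sigma(N) \cup \sigma(M))$. As the continuous functional calculus is isometric and $\tau$ is norm-continuous, the identity of the previous paragraph extends to $\tau(f(N)) = \tau(f(M))$ for every $f \in C(\sigma(N) \cup \sigma(M))$. The two positive linear functionals $f \mapsto \tau(f(N))$ and $f \mapsto \tau(f(M))$ on $C(\sigma(N) \cup \sigma(M))$ therefore agree, and by the Riesz representation theorem the Radon measures representing them coincide. Normality of $\tau$ identifies these representing measures with $\tau \circ E_N$ and $\tau \circ E_M$ respectively, so $\tau \circ E_N = \tau \circ E_M$, and {\cite[Theorem 1.3]{Sh}} then gives $N \sim_{au} M$.

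The main obstacle is conceptual rather than computational: one must correctly identify the invariant governing approximate unitary equivalence in a type II$_1$ factor and verify that equality of the functionals $f \mapsto \tau(f(\cdot))$ on continuous functions genuinely forces equality of the spectral distributions $\tau \circ E_N$ and $\tau \circ E_M$ appearing in Sherman's theorem. This is precisely where normality of $\tau$ is essential, since it is what guarantees that $\tau \circ E_N$ and $\tau \circ E_M$ are genuine (countably additive) measures and hence that the passage from the continuous functional calculus to the underlying spectral measures is valid.
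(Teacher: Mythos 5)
Your proposal is correct and follows essentially the same route as the paper: the paper likewise observes that $\tau(p(N)) = \tau(p(M))$ for all polynomials $p$ because the trace is invariant under conjugation by invertibles and norm-continuous, extends this to continuous functions on the (real, compact) joint spectrum, uses faithfulness and normality of $\tau$ to conclude that $N$ and $M$ have the same spectral distribution, and then invokes {\cite[Theorem 1.3]{Sh}}. Your write-up simply supplies the routine details that the paper labels as trivial to verify.
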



\begin{thebibliography}{13}


\bibitem[AFHV]{AFHV} C. Apostol, L. Filahow, D. A. Herrero and D. Voiculescu, \textit{Approximation of Hilbert Space Operators, Volume 2}, Research Notes in Mathematics, Pitman Books Limited, 1984.

\bibitem[AHV]{AHV} C. Apostol, D. A. Herrero and D. Voiculescu, \textit{The Closure of the Similarity Orbit of a Hilbert Space Operator}, Bull. Amer. Math. Soc. (N.S.), \textbf{6}(3) (1982), 421--426.

\bibitem[BDF]{BDF} L. G. Brown, R. G. Douglas and P. A. Fillmore, \textit{Unitary Equivalence Modulo the Compact Operators and Extensions of C$^*$-Algebras}, Proceedings of a Conference on Operator Theory, Lecture Notes in Mathematics, Springer-Verlag, Heidelberg, \textbf{345} (1973), 58--128.

\bibitem[BH]{BH} J. Barr\'{i}a and D. A. Herrero, \textit{Closure of Similarity Orbits of Hilbert Space Operators, IV: Normal Operators}, J. London Math. Soc. (2), \textbf{17} (1978), 525--536.

\bibitem[Cu]{Cu} J. Cuntz, \textit{K-theory for certain C$^*$-algebras}, Ann. Math., \textbf{131} (1981), 181--197.

\bibitem[Dad]{Dad} M. Dadarlat, \textit{Approximately Unitarily Equivalent Morphisms and Inductive Limit C*-Algebras}, K-Theory, \textbf{9} (1995), 117--137.

\bibitem[Da1]{Da1} K. R. Davidson, \textit{The distance between unitary orbits of normal elements in the Calkin algebra}, P. Roy. Soc. Edinb., \textbf{99A} (1984), 35--43.

\bibitem[Da2]{Da2} K. R. Davidson, \textit{The distance between unitary orbits of normal operators}, Acta Sci. Math., \textbf{50} (1986), 213--223.

\bibitem[Da3]{Da3} K. R. Davidson, \textit{Estimating the Distance Between Unitary Orbits}, J. Operator Theory, \textbf{20} (1988), 21--40.

\bibitem[Da4]{Da4} K. R. Davidson, \textit{C$^*$-Algebras by Example}, Fields Institute Monographs 6, American Mathematical Society, Providence, RI, 1996.

\bibitem[He]{He} D. A. Herrero, \textit{Approximation of Hilbert Space Operators, Volume 1}, Research Notes in Mathematics, Pitman Books Limited, 1982.

\bibitem[HN]{HN} F. Hiai and Y. Nakamura, \textit{Distance Between Unitary Orbits in Von Neumann Algebras}, Pacific J. Math., \textbf{138}(2) (1989), 259--294.

\bibitem[KP]{KP} E. Kirchberg and N. C. Phillips, \textit{Embedding of exact C$^*$-algebras in the Cuntz algebra $\mathcal{O}_2$}, J. reine angew. Math., \textbf{525} (2000), 17--53.

\bibitem[Li]{Li} H. Lin, \textit{Approximation by Normal Elements with Finite Spectra in C$^*$-Algebras of Real Rank Zero}, Pacific J. Math., \textbf{173}(2) (1996),  413--487.

\bibitem[RLL]{RLL} M. R\o rdam, F. Larsen and N. J. Lausten, \textit{An Introduction to K-Theory for C$^*$-Algebras}, London Mathematics Society, Student Texts, \textbf{49}, Cambridge University Press, 2000.

\bibitem[R\o]{Ro} M. R\o rdam, \textit{Classification of Certain Infinite Simple C$^*$-Algebras}, J. Funct. Anal., \textbf{131} (1995), 415--458.

\bibitem[RS]{RS} J. Rosenberg and C. Schochet, \textit{The K\"{u}nneth theorem and the universal coefficient theorem for Kasparov's generalized functor}, Duke Math. J., \textbf{55}(2) (1987), 431--474.

\bibitem[Sh]{Sh} D. Sherman, \textit{Unitary orbits of normal operators in von Neumann algebras}, J. Reine Angew. Math., \textbf{605} (2007), 95--132.

\bibitem[Sk]{Sk} P. Skoufranis, \textit{Normal Limits of Nilpotents in C$^*$-Algebras},	arXiv:1205.3537, (2012).

\end{thebibliography}
\end{document}